\newtheorem{definition}{Definition}[section]
\newtheorem{proposition}{Proposition}[section]
\newtheorem{theorem}{Theorem}[section]
\newtheorem{remark}{Remark}[section]
\newtheorem{corollary}{Corollary}[section]
\newtheorem{lemma}{Lemma}[section]
\begin{document}

\author{Nicoleta Aldea and Gheorghe Munteanu}
\title{On complex Landsberg and Berwald spaces}
\date{}
\maketitle

\begin{abstract}
In this paper, we study the complex Landsberg spaces and some of their
important subclasses. The tools of this study are: the Chern-Finsler,
Berwald and Rund complex linear connections. We introduce and characterize
the class of generalized Berwald and complex Landsberg spaces. The
intersection of these spaces gives the so called $G-$ Landsberg class. This
last class contains two other kinds of complex Finsler spaces: strong
Landsberg and $G-$K\"{a}hler spaces. We prove that the class of $G-$%
K\"{a}hler spaces coincides with complex Berwald spaces, in \cite{Ai}'s
sense, and it is a subclass of the strong Landsberg spaces. Some special
complex Finsler spaces with $(\alpha ,\beta )$ - metrics offer examples of
generalized Berwald spaces. Complex Randers spaces with generalized Berwald
and weakly K\"{a}hler properties are complex Berwald spaces.
\end{abstract}

\begin{flushleft}
\strut \textbf{2000 Mathematics Subject Classification:} 53B40, 53C60.

\textbf{Key words and phrases:} complex Landsberg space, G-Landsberg space, strong-Landsberg space, complex Berwald space, generalized Berwald space.
\end{flushleft}

\section{Introduction}

\setlength\arraycolsep{3pt} \setcounter{equation}{0}The real Landsberg
spaces, in particular the real Berwald spaces, have been a major subject of
study for many people over the years. In 1926 L. Berwald introduced a
special class of Finsler spaces which took his name in 1964. It is known
that a real Finsler space is called a Berwald space if the local
coefficients of the Berwald connection depend only on position coordinates.
An equivalent condition to this is that the Cartan tensor field is $h-$
parallel to the Berwald connection, i.e. $C_{ijk;r}=0,$ where here '$;$'
means the horizontal covariant derivative with respect to Berwald
connection. In 1934 Cartan emphasized two weak points of the Berwald
connection. One is that it is not metrical. Moreover $g_{ij;k}=-2C_{ijk;0}$
and therefore if $C_{ijk;0}=0$, then it becomes metrical. However, such a
space was called Landsberg by L. Berwald in 1928.

Many great contributions to the geometry of the real Landsberg and Berwald
spaces are due to Z. Szabo \cite{Sz1}, M. Matsumoto \cite{Mi}, P. Antonelli
\cite{An}, A. Bejancu \cite{Be}, Z. Shen \cite{Sh1}. Every Berwald space is
a Landsberg space. The converse, has been a long-standing problem, \cite
{L-S,Sz2,Do}.

Part of the general themes from real Finsler geometry about Landsberg and
Berwald spaces can be broached in complex Finsler geometry. However, there
are sensitive differences comparing to real reasonings, mainly on account of
the fact that in complex Finsler geometry there exist two different
covariant derivative for the Cartan tensors, $C_{i\bar{j}k\;|\;h}$ and $C_{i%
\bar{j}k\;|\;\bar{h}}$. Such reason determined T. Aikou, \cite{Ai}, to
request in the definition of a complex Berwald space, beside the natural
condition $C_{i\bar{j}k\;|\;h}=0$, the K\"{a}hler condition.

Therefore, the same arguments will be taken into account in the definition
of a complex Landsberg space. Using some ideas from the real case, related
to the Rund and Berwald connections, our aim in the present paper is to
introduce and study the complex Landsberg spaces and some of their
subclasses.

We associate to the canonical nonlinear connection, with the local
coefficients $\stackrel{c}{N_{j}^{i}}:=\dot{\partial}_{j}G^{i}$, two complex
linear connections: one of Berwald type $B\Gamma :=(\stackrel{c}{N_{j}^{i}},%
\stackrel{B}{L_{jk}^{i}},\stackrel{B}{L_{j\bar{k}}^{i}},0,0)$ and another of
Rund type $R\Gamma :=(\stackrel{c}{N_{j}^{i}},\stackrel{c}{L_{jk}^{i}},%
\stackrel{c}{L_{j\bar{k}}^{i}},0,0).$ In the real case, a Finsler space is
Landsberg if the Berwald and Rund connections coincide. But, in complex
Finsler geometry the things are considerably more difficult because in
general the $B\Gamma $ and $R\Gamma $ connections are not of $(1,0)$ - type.
Moreover, in the complex case alongside of the horizontal covariant
derivative, with respect to $B\Gamma $ connection, we have its conjugate.
Here, we speak of complex \textit{Landsberg} space iff $\stackrel{B}{%
L_{jk}^{i}}=\stackrel{c}{L_{jk}^{i}}$ and various characterizations of
Landsberg spaces are proved in Theorem 3.1. Further on we define the class
of $G$\textit{\ - Landsberg} spaces. It is in the class of complex Landsberg
spaces with $\dot{\partial}_{\bar{k}}G^{i}=0$. Theorem 3.2 reports on the
necessary and sufficient conditions for a complex Finsler space to be a $G$
- Landsberg space. A reinforcement of the tensorial characterization for a $%
G $ - Landsberg space gives rise to a subclass of $G$ - Landsberg, namely
\textit{strong Landsberg} iff $C_{l\bar{r}h\stackrel{B}{|}0}=0$ and $C_{j%
\bar{r}h\stackrel{B}{|}\bar{0}}=0.$ Other important properties of the strong
Landsberg spaces are contained in Theorem 3.3.

Because any K\"{a}hler space is a complex Landsberg space, the substitution
of the Landsberg condition with the K\"{a}hler condition in the definition
of the $G$ - Landsberg spaces leads to another subclass of this, called us $%
G $\textit{\ - K\"{a}hler.} Inter alia in Theorem 3.4 we prove that it
coincides with the category of complex Berwald spaces defined by Aikou in
\cite{Ai}. The strong Landsberg spaces are situated somewhere between
complex Berwald spaces and $G$ - Landsberg spaces.

The complex Berwald spaces were introduced as a generalization of the real
case, but in the particular context of K\"{a}hler. Therefore an
unquestionable extension of these, directly related on the $B\Gamma $
connection, is called by us \textit{generalized Berwald} space. It is with
the coefficients $\stackrel{B}{L_{jk}^{i}}$ depending only on the position $%
z.$ We give some characterizations for the generalized Berwald space, see
Theorem 3.6.

An intuitive scheme with the introduced classes of complex Finsler spaces is
in the following figure.
\begin{figure*}[h]
\centering
\includegraphics[scale=0.50]{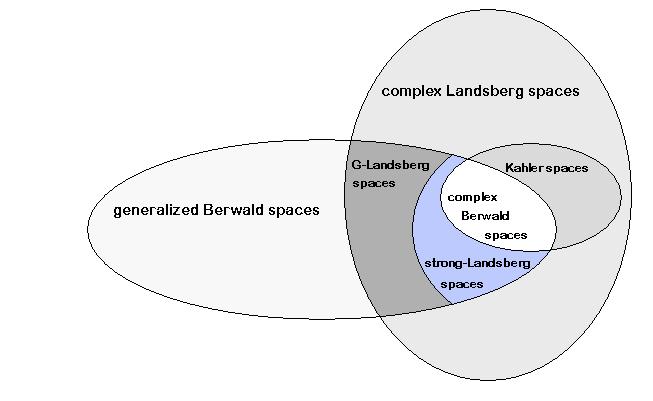} 
\caption{Inclusions}
\end{figure*}

The general theory on generalized Berwald spaces is completed by some
special outcomes for the class of complex Finsler spaces with $(\alpha
,\beta )$ - metrics. We prove that the complex Berwald spaces under
assumptions of generalized Berwald and weakly K\"{a}hler are complex
Berwald, see Theorem 4.3. A class of complex Kropina spaces which are
generalized Berwald is distinguished in Theorem 4.4.

The organization of the paper is as follows. In \S 2, we recall some
preliminary properties of the $n$ - dimensional complex Finsler spaces,
completed with some others needed for our aforementioned study. In \S 3, we
prove the above mentioned Theorems and we estabilish interrelations among
all classes of complex Finsler spaces. In section \S 4 we produced some
family of complex Finsler spaces with $(\alpha ,\beta )$ - metrics which are
generalized Berwald spaces and in particular, complex Berwald spaces.

\section{Preliminaries}

In this section we will give some preliminaries about complex Finsler
geometry with Chern-Finsler, Berwald and Rund complex linear connections. We
will set the basic notions (for more see \cite{A-P,Mub}) and we will prove
some important properties of these connections.

\subsection{Complex Finsler spaces}

\setcounter{equation}{0}Let $M$ be a $n-$dimensional complex manifold, $%
z=(z^k)_{k=\overline{1,n}}$ be the complex coordinates in a local chart.

The complexified of the real tangent bundle $T_CM$ splits into the sum of
holomorphic tangent bundle $T^{\prime }M$ and its conjugate $T^{\prime
\prime }M$. The bundle $T^{\prime }M$ is itself a complex manifold and the
local coordinates in a local chart will be denoted by $u=(z^k,\eta ^k)_{k=%
\overline{1,n}}.$ These are changed into $(z^{\prime k},\eta ^{\prime k})_{k=%
\overline{1,n}}$ by the rules $z^{\prime k}=z^{\prime k}(z)$ and $\eta
^{\prime k}=\frac{\partial z^{\prime k}}{\partial z^l}\eta ^l.$

A \textit{complex Finsler space} is a pair $(M,F)$, where $F:T^{\prime
}M\rightarrow \mathbb{R}^{+}$ is a continuous function satisfying the
conditions:

\textit{i)} $L:=F^2$ is smooth on $\widetilde{T^{\prime }M}:=T^{\prime
}M\backslash \{0\};$

\textit{ii)} $F(z,\eta )\geq 0$, the equality holds if and only if $\eta =0;$

\textit{iii)} $F(z,\lambda \eta )=|\lambda |F(z,\eta )$ for $\forall \lambda
\in \mathbb{C}$;

\textit{iv)} the Hermitian matrix $\left( g_{i\bar{j}}(z,\eta )\right) $ is
positive definite, where $g_{i\bar{j}}:=\frac{\partial ^2L}{\partial \eta
^i\partial \bar{\eta}^j}$ is the fundamental metric tensor. Equivalently, it
means that the indicatrix is strongly pseudo-convex.

Consequently, from $iii$) we have $\frac{\partial L}{\partial \eta ^k}\eta
^k=\frac{\partial L}{\partial \bar{\eta}^k}\bar{\eta}^k=L,$ $\frac{\partial
g_{i\bar{j}}}{\partial \eta ^k}\eta ^k=\frac{\partial g_{i\bar{j}}}{\partial
\bar{\eta}^k}\bar{\eta}^k=0$ and $L=g_{i\bar{j}}\eta ^i\bar{\eta}^j.$

Roughly speaking, the geometry of a complex Finsler space consists of the
study of the geometric objects of the complex manifold $T^{\prime }M$
endowed with the Hermitian metric structure defined by $g_{i\bar{j}}.$

Therefore, the first step is to study sections of the complexified tangent
bundle of $T^{\prime }M,$ which is decomposed in the sum $T_C(T^{\prime
}M)=T^{\prime }(T^{\prime }M)\oplus T^{\prime \prime }(T^{\prime }M)$. Let $%
VT^{\prime }M\subset T^{\prime }(T^{\prime }M)$ be the vertical bundle,
locally spanned by $\{\frac \partial {\partial \eta ^k}\}$, and $VT^{\prime
\prime }M$ be its conjugate.

At this point, the idea of complex nonlinear connection, briefly $(c.n.c.),$
is an instrument in 'linearization' of this geometry. A $(c.n.c.)$ is a
supplementary complex subbundle to $VT^{\prime }M$ in $T^{\prime }(T^{\prime
}M)$, i.e. $T^{\prime }(T^{\prime }M)=HT^{\prime }M\oplus VT^{\prime }M.$
The horizontal distribution $H_uT^{\prime }M$ is locally spanned by $\{\frac
\delta {\delta z^k}=\frac \partial {\partial z^k}-N_k^j\frac \partial
{\partial \eta ^j}\}$, where $N_k^j(z,\eta )$ are the coefficients of the $%
(c.n.c.)$. The pair $\{\delta _k:=\frac \delta {\delta z^k},\dot{\partial}%
_k:=\frac \partial {\partial \eta ^k}\}$ will be called the adapted frame of
the $(c.n.c.)$ which obey to the change rules $\delta _k=\frac{\partial
z^{\prime j}}{\partial z^k}\delta _j^{\prime }$ and $\dot{\partial}_k=\frac{%
\partial z^{\prime j}}{\partial z^k}\dot{\partial}_j^{\prime }.$ By
conjugation everywhere we have obtained an adapted frame $\{\delta _{\bar{k}%
},\dot{\partial}_{\bar{k}}\}$ on $T_u^{\prime \prime }(T^{\prime }M).$ The
dual adapted bases are $\{dz^k,\delta \eta ^k\}$ and $\{d\bar{z}^k,\delta
\bar{\eta}^k\}.$

Certainly, a main problem in this geometry is to determine a $(c.n.c.)$
related only to the fundamental function of the complex Finsler space $%
(M,F). $

The next step is the action of a derivative law $D$ on the sections of $%
T_C(T^{\prime }M).$ A Hermitian connection $D$, of $(1,0)-$ type, which
satisfies in addition $D_{JX}Y=JD_XY,$ for all $X$ horizontal vectors and $J$
the natural complex structure of the manifold, is so called Chern-Finsler
connection (cf. \cite{A-P}), in brief $C-F.$ The $C-F$ connection is locally
given by the following coefficients (cf. \cite{Mub}):
\begin{equation}
N_j^i=g^{\overline{m}i}\frac{\partial g_{l\overline{m}}}{\partial z^j}\eta
^l=L_{lj}^k\eta ^l\;;\;L_{jk}^i=g^{\overline{l}i}\delta _kg_{j\overline{l}}=%
\dot{\partial}_jN^i_k\;\;;\;C_{jk}^i=g^{\overline{l}i}\dot{\partial}_kg_{j%
\overline{l}},  \label{1.3}
\end{equation}
and $L_{\overline{j}k}^{\overline{\imath }}=C_{\overline{j}k}^{\overline{%
\imath }}=0$, where here and further on $\delta _k$ is the adapted frame of
the $C-F$ $(c.n.c.)$ and $D_{\delta _k}\delta _j=L_{jk}^i\delta _i,$ $D_{%
\dot{\partial}_k}\dot{\partial}_j=C_{jk}^i\dot{\partial}_i,$ etc. The $C-F$
connection is the main tool in this study.

Denoting by $"\shortmid "$ , $"\mid "$ , $"\bar{\shortmid}"$ and $"\bar{\mid}%
",$ the $h-$, $v-$, $\overline{h}-$, $\overline{v}-$ covariant derivatives
with respect to $C-F$ connection, respectively, it results
\begin{eqnarray}
\eta _{|k}^i &=&\eta _{|\overline{k}}^i=\eta ^i|_{\overline{k}}=0\;;\;\;\eta
^i|_k=\delta _k^i;  \label{1.3.} \\
g_{i\overline{j}|k} &=&g_{i\overline{j}|\overline{k}}=g_{i\overline{j}%
}|_k=g_{i\overline{j}}|_{\overline{k}}=0.  \nonumber
\end{eqnarray}

Now, we consider the complex Cartan tensors: $C_{i\bar{j}k}=\dot{\partial}_k
g_{i{\bar j}}$ and $C_{i\bar{j}\bar{k}}=\dot{\partial}_{\bar{k}} g_{i{\bar j}%
}$.

\begin{lemma}
For any complex Finsler space $(M,F)$, we have

i) $C_{l\bar{r}h|k}=(\dot{\partial}_{h}L_{lk}^{i})g_{i\bar{r}};$

ii) $C_{l\bar{r}\bar{h}|k}=(\dot{\partial}_{\bar{h}}L_{lk}^{i})g_{i\bar{r}}+(%
\dot{\partial}_{\bar{h}}N_{k}^{i})C_{i\bar{r}l}.$
\end{lemma}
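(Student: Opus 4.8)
The plan is to compute the $h$-covariant derivatives of the Cartan tensors directly from the definitions $C_{l\bar r h}=\dot\partial_h g_{l\bar r}$ and $C_{l\bar r\bar h}=\dot\partial_{\bar h} g_{l\bar r}$, using the covariant-constancy of the metric, $g_{l\bar r|k}=0$, recorded in \eqref{1.3.}. First I would write out $C_{l\bar r h|k}$ by applying the $h$-covariant derivative rule for a tensor with the relevant index types: $C_{l\bar r h|k}=\delta_k C_{l\bar r h}-L^{i}_{lk}C_{i\bar r h}-L^{i}_{hk}C_{l\bar r i}$ (there is no contribution from the $\bar r$ slot since $L^{\bar\imath}_{\bar j k}=0$). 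Substituting $C_{l\bar r h}=\dot\partial_h g_{l\bar r}$ and regrouping, the three terms combine into $\dot\partial_h\bigl(\delta_k g_{l\bar r}-L^i_{lk}g_{i\bar r}\bigr)$ plus a commutator term coming from $[\dot\partial_h,\delta_k]$ and from the derivative $\dot\partial_h L^i_{lk}$ hitting $g_{i\bar r}$. The bracket $\delta_k g_{l\bar r}-L^i_{lk}g_{i\bar r}$ is, by the definition \eqref{1.3} of $L^i_{lk}=g^{\bar l i}\delta_k g_{j\bar l}$, precisely $g_{l\bar r|k}$ up to the remaining $\bar r$-term, and in any case equals zero by \eqref{1.3.}; so only the term $(\dot\partial_h L^i_{lk})g_{i\bar r}$ survives, giving $i)$.

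For $ii)$ the computation is the same in spirit but now the differentiated index $\bar h$ is an anti-holomorphic vertical index, so the $h$-covariant derivative of $C_{l\bar r\bar h}$ picks up the connection coefficient attached to a $\bar h$ slot. Here is where the extra term appears: $C_{l\bar r\bar h|k}=\delta_k C_{l\bar r\bar h}-L^i_{lk}C_{i\bar r\bar h}-(\text{coefficient for the }\bar h\text{ slot})\,C_{l\bar r\bar i}$, and since for a vertical $\bar h$-index the relevant object is $\dot\partial_{\bar h} N^i_k$ (this is exactly the ingredient that measures the failure of the $(c.n.c.)$ to be $(1,0)$), one gets the extra $(\dot\partial_{\bar h}N^i_k)C_{i\bar r l}$ on top of $(\dot\partial_{\bar h}L^i_{lk})g_{i\bar r}$. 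Concretely I would start from $C_{l\bar r\bar h|k}=\dot\partial_{\bar h}(g_{l\bar r|k}) + [\text{commutator of }\dot\partial_{\bar h}\text{ with the }h\text{-covariant derivative}]$, use $g_{l\bar r|k}=0$ to kill the first piece, and identify the commutator; the commutator $[\dot\partial_{\bar h},\delta_k]=(\dot\partial_{\bar h}N^j_k)\dot\partial_j$ produces the $(\dot\partial_{\bar h}N^i_k)C_{i\bar r l}$ term, while differentiating the connection coefficients $L^i_{lk}$ in the covariant derivative produces the $(\dot\partial_{\bar h}L^i_{lk})g_{i\bar r}$ term.

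The main obstacle — really the only non-routine point — is bookkeeping: getting the signs right and confirming that the unwanted terms genuinely cancel against $g_{i\bar j|k}=0$ rather than against some identity I have not yet justified. In particular I would double-check that in $i)$ no $\dot\partial_h N^i_k$-type term appears (it does not, because $h$ is a holomorphic vertical index and the relevant bracket $[\dot\partial_h,\delta_k]$ contributes a term whose coefficient $\dot\partial_h N^i_k = L^i_{hk}$ is already accounted for in the $L^i_{hk}C_{l\bar r i}$ piece of the covariant derivative), whereas in $ii)$ the analogous coefficient is $\dot\partial_{\bar h}N^i_k$, which is \emph{not} one of the connection coefficients $L^i_{\cdot k}$ (those are $\dot\partial_{\bar h}$-free in the $C$-$F$ setting only for the holomorphic directions), hence it survives as the genuinely new term. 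Once the commutator identities $[\dot\partial_h,\delta_k]=(\dot\partial_h N^j_k)\dot\partial_j$ and $[\dot\partial_{\bar h},\delta_k]=(\dot\partial_{\bar h}N^j_k)\dot\partial_j$ are in hand, both identities drop out after a short rearrangement.
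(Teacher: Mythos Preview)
Your argument is correct and, once unpacked, is the same computation as the paper's: both proofs differentiate the metricity relation $\delta_k g_{l\bar r}=L^i_{lk}g_{i\bar r}$ (equivalently $g_{l\bar r|k}=0$) with respect to $\eta^h$ for i) and $\bar\eta^h$ for ii). The paper writes this relation in the coordinate form $L^i_{lk}g_{i\bar r}=\frac{\partial g_{l\bar r}}{\partial z^k}-N^i_k C_{i\bar r l}$ and applies $\dot\partial_h$, $\dot\partial_{\bar h}$ directly; you repackage the same step through the commutators $[\dot\partial_h,\delta_k]$, $[\dot\partial_{\bar h},\delta_k]$ together with the covariant-derivative formula for $C$. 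One expositional wobble to clean up: in your first pass at ii) you attribute the extra term $(\dot\partial_{\bar h}N^i_k)C_{i\bar r l}$ to a connection coefficient acting on the $\bar h$ slot, but for the Chern--Finsler connection $L^{\bar m}_{\bar h k}=0$, so the covariant derivative reads simply $C_{l\bar r\bar h|k}=\delta_k C_{l\bar r\bar h}-L^i_{lk}C_{i\bar r\bar h}$ with \emph{no} $\bar h$-slot contribution; the extra term comes entirely from the commutator $\delta_k\dot\partial_{\bar h}=\dot\partial_{\bar h}\delta_k+(\dot\partial_{\bar h}N^j_k)\dot\partial_j$, exactly as your ``Concretely'' paragraph then correctly states. (Watch the sign of the bracket: $[\dot\partial_{\bar h},\delta_k]=-(\dot\partial_{\bar h}N^j_k)\dot\partial_j$, though the sign in the rearranged identity you actually use is the right one.)
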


\begin{proof}Differentiating $N_k^ig_{i\bar{r}}=\frac{\partial g_{j\bar{r}}}{%
\partial z^k}\eta ^j$ with respect to $\eta ^l$, gives
\begin{equation}
L_{lk}^ig_{i\bar{r}}=\frac{\partial g_{l\bar{r}}}{\partial z^k}-N_k^iC_{i%
\bar{r}l}.  \label{1.14}
\end{equation}

Now, differentiating in (\ref{1.14}) with respect to $\eta ^h$ it
results i), and with respect to $%
\bar{\eta}^h$ leads to ii).
\end{proof}

Let us recall that in \cite{A-P}'s terminology, the complex Finsler space $%
(M,F)$ is \textit{strongly K\"{a}hler} iff $T_{jk}^i=0,$ \textit{K\"{a}hler}$%
\;$iff $T_{jk}^i\eta ^j=0$ and \textit{weakly K\"{a}hler }iff\textit{\ } $%
g_{i\overline{l}}T_{jk}^i\eta ^j\overline{\eta }^l=0,$ where $%
T_{jk}^i:=L_{jk}^i-L_{kj}^i.$ In \cite{C-S} it is proved that strongly
K\"{a}hler and K\"{a}hler notions actually coincide. We notice that in the
particular case of complex Finsler metrics which come from Hermitian metrics
on $M,$ so-called \textit{purely Hermitian metrics} in \cite{Mub}, (i.e. $%
g_{i\overline{j}}=g_{i\overline{j}}(z)$)$,$ all those nuances of K\"{a}hler
are same.

On the other hand, as in Aikou's work \cite{Ai}, a complex Finsler space
which is K\"{a}hler and $L_{jk}^{i}=L_{jk}^{i}(z)$ is named a \textit{%
complex Berwald} space.

\subsection{Connections on complex Finsler spaces}

It is well known by \cite{A-P,Mub} that the complex geodesics curves are
defined by means of Chern-Finsler $(c.n.c.)$. Between complex spray and $%
(c.n.c.)$ there exists an interdependence, one determining the other. In
\cite{Mub} it is proved that the Chern-Finsler $(c.n.c.)$ does not generally
come from a complex spray except when the complex metric is weakly
K\"{a}hler. On the other hand, its local coefficients $N_{j}^{k}=g^{\bar{m}k}%
\frac{\partial g_{l\bar{m}}}{\partial z^{j}}\eta ^{l}$ always determine a
complex spray with coefficients $G^{i}=\frac{1}{2}N_{j}^{i}\eta ^{j}.$
Further, $G^{i}$ induce a $(c.n.c.)$ denoted by $\stackrel{c}{N_{j}^{i}}:=%
\dot{\partial}_{j}G^{i}$ and called \textit{canonical} in \cite{Mub}, where
it is proved that it coincides with Chern-Finsler $(c.n.c.)$ if and only if
the complex Finsler metric is K\"{a}hler. Using canonical $(c.n.c.)$ we
associate to it the next complex linear connections: one of Berwald type
\[
B\Gamma :=\left( \stackrel{c}{N_{j}^{i}},\stackrel{B}{L_{jk}^{i}}:=\dot{%
\partial}_{k}\stackrel{c}{N_{j}^{i}}=\stackrel{B}{L_{kj}^{i}},\stackrel{B}{%
L_{j\bar{k}}^{i}}:=\dot{\partial}_{\bar{k}}\stackrel{c}{N_{j}^{i}}%
,0,0\right)
\]
and another of Rund type
\[
R\Gamma :=\left( \stackrel{c}{N_{j}^{i}},\stackrel{c}{L_{jk}^{i}}:=\frac{1}{2%
}g^{\overline{l}i}(\stackrel{c}{\delta _{k}}g_{j\overline{l}}+\stackrel{c}{%
\delta _{j}}g_{k\overline{l}}),\stackrel{c}{L_{j\bar{k}}^{i}}:=\frac{1}{2}g^{%
\overline{l}i}(\stackrel{c}{\delta _{\bar{k}}}g_{j\overline{l}}-\stackrel{c}{%
\delta _{\bar{l}}}g_{j\overline{k}}),0,0\right) ,
\]
where $\stackrel{c}{\delta _{k}}:=\frac{\partial }{\partial z^{k}}-\stackrel{%
c}{N_{k}^{j}}\dot{\partial}_{j}.$ $R\Gamma $ is only $h-$ metrical and $%
B\Gamma $ is neither $h-$ nor $v-$ metrical, (for more details see \cite{Mub}%
). Note that the spray coefficients perform $2G^{i}:=N_{j}^{i}\eta ^{j}=%
\stackrel{c}{N_{j}^{i}}\eta ^{j}=\stackrel{B}{L_{jk}^{i}}\eta ^{j}\eta ^{k}$
and $\stackrel{c}{\delta _{j}}=\delta _{j}-(\stackrel{c}{N_{j}^{k}}%
-N_{j}^{k})\dot{\partial}_{k}.$

Moreover, in the K\"{a}hler case $\stackrel{c}{\delta _j}=$ $\delta _j$, (%
\cite{Mub}, p. 68) and hence, $\stackrel{c}{\delta _j}g_{k\bar{h}}=\stackrel{%
c}{\delta _k}g_{j\bar{h}}$ which contracted with $g^{\bar{\imath}j}$ gives $%
g^{\bar{\imath}j}(\stackrel{c}{\delta _j}g_{k\bar{h}}-\stackrel{c}{\delta _k}%
g_{j\bar{h}})=0,$ that is $\stackrel{c}{L_{\bar{h}j}^{\bar{\imath}}}=0$. By
conjugation, it follows $\stackrel{c}{L_{h\bar{j}}^i}=0.$ Also, in the
K\"{a}hler case we have $L_{jk}^i=\stackrel{c}{L_{jk}^i}=\stackrel{B}{%
L_{jk}^i}.$

Further on, everywhere in this paper the Berwald and Rund connections will
be specified by a super-index, like above (e.g. $\stackrel{c}{\delta _k}$, $%
\stackrel{B}{L_{jk}^i}$, $\stackrel{c}{L_{jk}^i}$, $X_{\stackrel{B}{|}k}$,
etc.), while for the Chern-Finsler connection will keep the initial generic
notation without super-index (e.g. $\delta _k$, $L_{jk}^i$, $X_{|k}$, etc.).

\begin{lemma}
For any complex Finsler space $(M,F)$, the following hold:

i) $\stackrel{B}{L_{j\bar{k}}^{i}}\bar{\eta}^{k}=0;$

ii) $-C_{l\bar{r}h\stackrel{B}{|}0}=g_{l\bar{r}\stackrel{B}{|}h}+g_{h\bar{r}%
\stackrel{B}{|}l}+\stackrel{B}{L_{\bar{r}h}^{\bar{m}}}g_{l\bar{m}}+\stackrel{%
B}{L_{\bar{r}l}^{\bar{m}}}g_{h\bar{m}};$

iii) $2(\dot{\partial}_{\bar{h}}G^{i})g_{i\bar{r}}=C_{0\bar{r}\bar{h}%
\stackrel{B}{|}0}=C_{0\bar{r}\bar{h}|0};$

iv) $C_{i\bar{j}h\stackrel{B}{|}k}=\dot{\partial}_{h}(g_{i\bar{j}\stackrel{B%
}{|}k})+(\dot{\partial}_{h}\stackrel{B\;\;}{L_{ik}^{l})}g_{l\bar{j}}+(\dot{%
\partial}_{h}\stackrel{B\;\;}{L_{\bar{j}k}^{\bar{m}})}g_{i\bar{m}};$

v) $C_{i\bar{r}\bar{h}\stackrel{B}{|}k}=\dot{\partial}_{\bar{h}}(g_{i\bar{j}%
\stackrel{B}{|}k})+(\dot{\partial}_{\bar{h}}\stackrel{B\;\;}{L_{ik}^{l})}g_{l%
\bar{j}}+(\dot{\partial}_{\bar{h}}\stackrel{B\;\;}{L_{\bar{j}k}^{\bar{m}})}%
g_{i\bar{m}}+\stackrel{B\;\;}{L_{k\bar{h}}^{l}}C_{i\bar{j}l}-\stackrel{B\;\;%
}{L_{\bar{h}k}^{\bar{m}}}C_{i\bar{j}\bar{m}},$

where $\stackrel{B}{\shortmid }$ is $h-$covariant derivative with respect to
$B\Gamma $ connection.
\end{lemma}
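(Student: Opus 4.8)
The plan is to treat the five items in the order (i), (iv), (v), (ii), (iii), since (ii) will be obtained from (iv) by contracting with $\eta^{k}$ and (iii) from (v) and Lemma 2.1(ii) by contracting with $\eta^{l}\eta^{k}$. The whole argument rests on the homogeneity of the spray: $G^{i}$ is $(2,0)$-homogeneous (it is built from the degree-$(0,0)$ objects $g^{\bar{m}i}$, $\partial_{z^{j}}g_{l\bar{m}}$ times $\eta^{l}\eta^{j}$), hence $\stackrel{c}{N_{j}^{i}}=\dot{\partial}_{j}G^{i}$ is $(1,0)$-homogeneous, and I will use repeatedly the identities $\dot{\partial}_{\bar{k}}G^{i}\,\bar{\eta}^{k}=0$, $\stackrel{c}{N_{k}^{i}}\eta^{k}=N_{k}^{i}\eta^{k}=2G^{i}$, $\stackrel{B}{L_{jk}^{i}}\eta^{k}=\stackrel{c}{N_{j}^{i}}$, $L_{jk}^{i}\eta^{k}=2\stackrel{c}{N_{j}^{i}}-N_{j}^{i}$ (from $L_{jk}^{i}=\dot{\partial}_{j}N_{k}^{i}$), $\eta^{i}_{\stackrel{B}{|}k}=0$, $C_{l\bar{r}h}\eta^{l}=0$ (which follows from $g_{l\bar{r}}\eta^{l}=\dot{\partial}_{\bar{r}}L$), together with the bracket relations $[\dot{\partial}_{h},\stackrel{c}{\delta_{k}}]=-\stackrel{B}{L_{kh}^{l}}\dot{\partial}_{l}$ and $[\dot{\partial}_{\bar{h}},\stackrel{c}{\delta_{k}}]=-\stackrel{B}{L_{k\bar{h}}^{l}}\dot{\partial}_{l}$. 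Item (i) is then immediate: since $\stackrel{c}{N_{j}^{i}}$ is $(1,0)$-homogeneous, Euler's relation in the $\bar{\eta}$-variables gives $\dot{\partial}_{\bar{k}}\stackrel{c}{N_{j}^{i}}\cdot\bar{\eta}^{k}=0$, i.e. $\stackrel{B}{L_{j\bar{k}}^{i}}\bar{\eta}^{k}=0$.

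For (iv) and (v) I would carry out a direct bookkeeping. Writing out $C_{i\bar{j}h\stackrel{B}{|}k}$ from the definition of the horizontal covariant derivative of $B\Gamma$ on the three-index object $C_{i\bar{j}h}=\dot{\partial}_{h}g_{i\bar{j}}$ (the barred slot being handled by the conjugate coefficient $\stackrel{B}{L_{\bar{j}k}^{\bar{m}}}$), and on the other side expanding $\dot{\partial}_{h}(g_{i\bar{j}\stackrel{B}{|}k})$ by Leibniz from $g_{i\bar{j}\stackrel{B}{|}k}=\stackrel{c}{\delta_{k}}g_{i\bar{j}}-\stackrel{B}{L_{ik}^{l}}g_{l\bar{j}}-\stackrel{B}{L_{\bar{j}k}^{\bar{m}}}g_{i\bar{m}}$, using $\dot{\partial}_{h}g_{i\bar{j}}=C_{i\bar{j}h}$ and $[\dot{\partial}_{h},\stackrel{c}{\delta_{k}}]=-\stackrel{B}{L_{kh}^{l}}\dot{\partial}_{l}$, one sees that adding the correction terms $(\dot{\partial}_{h}\stackrel{B}{L_{ik}^{l}})g_{l\bar{j}}$ and $(\dot{\partial}_{h}\stackrel{B}{L_{\bar{j}k}^{\bar{m}}})g_{i\bar{m}}$ removes exactly the derivatives of the connection coefficients, and the remaining two expressions differ only in the terms $\stackrel{B}{L_{kh}^{l}}C_{i\bar{j}l}$ versus $\stackrel{B}{L_{hk}^{l}}C_{i\bar{j}l}$, which agree since $\stackrel{B}{L_{jk}^{i}}=\stackrel{B}{L_{kj}^{i}}$; this gives (iv). For (v) the only changes are that $C_{i\bar{j}\bar{h}}$ carries a second barred slot, so its horizontal derivative contains the extra term $-\stackrel{B}{L_{\bar{h}k}^{\bar{m}}}C_{i\bar{j}\bar{m}}$, and $[\dot{\partial}_{\bar{h}},\stackrel{c}{\delta_{k}}]=-\stackrel{B}{L_{k\bar{h}}^{l}}\dot{\partial}_{l}$ produces $-\stackrel{B}{L_{k\bar{h}}^{l}}C_{i\bar{j}l}$; these are precisely the last two summands on the right of (v).

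Item (ii) then follows from (iv) by contraction with $\eta^{k}$. Using $\stackrel{B}{L_{lk}^{m}}\eta^{k}=\stackrel{c}{N_{l}^{m}}$, $\dot{\partial}_{h}\stackrel{c}{N_{l}^{m}}=\stackrel{B}{L_{lh}^{m}}$, and $\stackrel{B}{L_{\bar{r}k}^{\bar{m}}}\eta^{k}=0$, the two correction terms become $\eta^{k}\dot{\partial}_{h}\stackrel{B}{L_{lk}^{m}}=0$ and $\eta^{k}\dot{\partial}_{h}\stackrel{B}{L_{\bar{r}k}^{\bar{m}}}=-\stackrel{B}{L_{\bar{r}h}^{\bar{m}}}$, so $C_{l\bar{r}h\stackrel{B}{|}0}=\dot{\partial}_{h}(g_{l\bar{r}\stackrel{B}{|}0})-g_{l\bar{r}\stackrel{B}{|}h}-\stackrel{B}{L_{\bar{r}h}^{\bar{m}}}g_{l\bar{m}}$. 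The key step is the explicit evaluation of $g_{l\bar{r}\stackrel{B}{|}0}$: expanding the $B\Gamma$ horizontal derivative, contracting with $\eta^{k}$, replacing $\partial_{z^{k}}g_{l\bar{r}}$ by $L_{lk}^{i}g_{i\bar{r}}+N_{k}^{i}C_{i\bar{r}l}$ from (\ref{1.14}), and using $L_{lk}^{i}\eta^{k}=2\stackrel{c}{N_{l}^{i}}-N_{l}^{i}$, $\stackrel{c}{N_{k}^{i}}\eta^{k}=2G^{i}$ and the symmetry $C_{i\bar{r}l}=C_{l\bar{r}i}$ (which cancels the leftover Cartan terms), one gets the clean formula $g_{l\bar{r}\stackrel{B}{|}0}=(\stackrel{c}{N_{l}^{i}}-N_{l}^{i})g_{i\bar{r}}$. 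Differentiating this by $\dot{\partial}_{h}$, and rewriting the right-hand side of (ii) as $\stackrel{c}{\delta_{h}}g_{l\bar{r}}+\stackrel{c}{\delta_{l}}g_{h\bar{r}}-2\stackrel{B}{L_{lh}^{m}}g_{m\bar{r}}$ (again through (\ref{1.14}) and the symmetry of the Cartan tensor), one checks that both sides collapse to the same expression, the last difference vanishing by $\stackrel{B}{L_{lh}^{i}}=\stackrel{B}{L_{hl}^{i}}$. I expect this reconciliation of the Chern-Finsler data $(N,L)$ with the canonical/Berwald data $(\stackrel{c}{N},\stackrel{B}{L})$ under $\eta$-contraction to be the only genuinely delicate point in the Lemma.

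Finally (iii). Contracting Lemma 2.1(ii) with $\eta^{l}\eta^{k}$ annihilates the summand containing $C_{i\bar{r}l}\eta^{l}=0$, while $(\dot{\partial}_{\bar{h}}L_{lk}^{i})\eta^{l}\eta^{k}=\dot{\partial}_{\bar{h}}(L_{lk}^{i}\eta^{l}\eta^{k})=\dot{\partial}_{\bar{h}}(2G^{i})$ because $\dot{\partial}_{\bar{h}}(\eta^{l}\eta^{k})=0$, so $C_{0\bar{r}\bar{h}|0}=2(\dot{\partial}_{\bar{h}}G^{i})g_{i\bar{r}}$. On the Berwald side I would contract (v) with $\eta^{l}\eta^{k}$: the $\dot{\partial}_{\bar{h}}(g_{l\bar{r}\stackrel{B}{|}k})$-term reduces to $\dot{\partial}_{\bar{h}}(g_{0\bar{r}\stackrel{B}{|}0})=0$, since $g_{0\bar{r}\stackrel{B}{|}0}=\eta^{l}(\stackrel{c}{N_{l}^{i}}-N_{l}^{i})g_{i\bar{r}}=(2G^{i}-2G^{i})g_{i\bar{r}}=0$; the terms carrying $\stackrel{B}{L_{\bar{r}k}^{\bar{m}}}$ and $\stackrel{B}{L_{\bar{h}k}^{\bar{m}}}$ vanish by (i); the term $\stackrel{B}{L_{k\bar{h}}^{m}}C_{l\bar{r}m}$ vanishes because $C_{l\bar{r}m}\eta^{l}=0$; what is left is $\eta^{l}\eta^{k}(\dot{\partial}_{\bar{h}}\stackrel{B}{L_{lk}^{m}})g_{m\bar{r}}=\dot{\partial}_{\bar{h}}(2G^{m})g_{m\bar{r}}$. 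Hence $C_{0\bar{r}\bar{h}\stackrel{B}{|}0}=2(\dot{\partial}_{\bar{h}}G^{i})g_{i\bar{r}}=C_{0\bar{r}\bar{h}|0}$, which proves (iii).
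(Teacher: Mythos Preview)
Your proof is correct. For (i), (iv) and (v) your argument is the same as the paper's, only phrased via the commutators $[\dot{\partial}_{h},\stackrel{c}{\delta_{k}}]$ and $[\dot{\partial}_{\bar{h}},\stackrel{c}{\delta_{k}}]$ rather than by expanding $\stackrel{c}{\delta_{k}}=\partial_{z^{k}}-\stackrel{c}{N_{k}^{l}}\dot{\partial}_{l}$ directly. Where you genuinely diverge is in (ii) and (iii): the paper obtains both by differentiating the spray identity $G^{i}g_{i\bar r}=\tfrac12(\partial g_{j\bar r}/\partial z^{k})\eta^{j}\eta^{k}$ (with respect to $\eta^{l},\eta^{h}$ for (ii), and with respect to $\bar{\eta}^{h}$ for (iii)), whereas you deduce them by contracting the already-proved (iv) and (v) with $\eta^{k}$, respectively $\eta^{i}\eta^{k}$. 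Your route to (ii) is a bit longer, since it forces you to compute the auxiliary formula $g_{l\bar r\stackrel{B}{|}0}=(\stackrel{c}{N_{l}^{i}}-N_{l}^{i})g_{i\bar r}$ and to use the mixed identity $L_{lk}^{i}\eta^{k}=2\stackrel{c}{N_{l}^{i}}-N_{l}^{i}$; the paper bypasses all of this by working directly from the spray identity. On the other hand, your organization makes the logical dependence transparent (the ``contracted'' identities (ii), (iii) are specializations of the ``pointwise'' ones (iv), (v)), and the intermediate formula for $g_{l\bar r\stackrel{B}{|}0}$ is itself a clean expression of how far $B\Gamma$ is from being $h$-metrical along the Liouville field. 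Either approach is fine.
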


\begin{proof}
i)\textbf{\ }$\stackrel{B}{L_{j\bar{k}}^i}\bar{\eta}^k=\frac{\partial
^2G^i}{\partial \eta ^j\partial \bar{\eta}^k}\bar{\eta}^k=\dot{\partial}_j[(%
\dot{\partial}_{\bar{k}}G^i)\bar{\eta}^k]=\dot{\partial}_j[\dot{\partial}_{%
\bar{k}}(\frac 12N_l^i\eta ^l)\bar{\eta}^k]$

$=\frac 12\dot{\partial}_j[(\dot{\partial}_{\bar{k}}N_l^i)\eta ^l\bar{\eta}%
^k]=\frac 12\dot{\partial}_j[\dot{\partial}_{\bar{k}}(g^{\bar{m}i}\frac{%
\partial g_{s\bar{m}}}{\partial z^l}\eta ^s)\bar{\eta}^k\eta ^l]=0.$

ii) $G^i=\frac 12N_j^i\eta ^j=\frac 12g^{\bar{m}i}\frac{\partial g_{j\bar{m}}%
}{\partial z^h}\eta ^j\eta ^k$ can be rewrite as follows
\begin{equation}
G^ig_{i\bar{r}}=\frac 12\frac{\partial g_{j\bar{r}}}{\partial z^k}\eta
^j\eta ^k.  \label{1.9}
\end{equation}
Differentiating (\ref{1.9}) with respect to $\eta ^l$ yields
\begin{equation}
\stackrel{c}{N_l^i}g_{i\bar{r}}=\frac 12\left( \frac{\partial g_{l\bar{r}}}{%
\partial z^k}+\frac{\partial g_{k\bar{r}}}{\partial z^l}\right) \eta
^k-G^iC_{i\bar{r}l}.  \label{1.10}
\end{equation}
Differentiating (\ref{1.10}) with respect to $\eta ^h$ it results
\begin{equation}
\stackrel{B}{L_{hl}^i}g_{i\bar{r}}=\frac 12\left( \frac{\partial g_{l\bar{r}}%
}{\partial z^h}+\frac{\partial g_{h\bar{r}}}{\partial z^l}\right) +\frac 12%
\frac{\partial C_{l\bar{r}h}}{\partial z^k}\eta ^k-G^i(\dot{\partial}_hC_{i%
\bar{r}l})-\stackrel{c}{N_h^i}C_{i\bar{r}l}-\stackrel{c}{N_l^i}C_{i\bar{r}h},
\label{1.11}
\end{equation}
which leads to
\begin{equation}
-(\stackrel{c}{\delta _k}C_{l\bar{r}h})\eta ^k+\stackrel{c}{N_h^i}C_{i\bar{r}%
l}+\stackrel{c}{N_l^i}C_{i\bar{r}h}=\stackrel{c}{\delta _h}g_{l\bar{r}}+%
\stackrel{c}{\delta _l}g_{h\bar{r}}-2\stackrel{B}{L_{hl}^i}g_{i\bar{r}}.
\label{1.12}
\end{equation}
Now, taking into account that $g_{l\bar{r}\stackrel{B}{|}h}=\stackrel{c}{%
\delta _h}g_{l\bar{r}}-\stackrel{B}{L_{lh}^i}g_{i\bar{r}}-\stackrel{B}{L_{%
\bar{r}h}^{\bar{m}}}g_{l\bar{m}}$ and i) it results ii).

iii) Differentiating (\ref{1.9}) with respect to $\bar{\eta}^h$ yields
\begin{equation}
2(\dot{\partial}_{\bar{h}}G^i)g_{i\bar{r}}=\frac{\partial C_{l\bar{r}\bar{h}}%
}{\partial z^k}\eta ^l\eta ^k-\stackrel{c}{N_k^i}\eta ^kC_{i\bar{r}\bar{h}}.
\label{1.13}
\end{equation}
But, $\dot{\partial}_l(C_{j\bar{s}\bar{h}}\eta ^j)=(\dot{\partial}_lC_{j\bar{%
s}\bar{h}})\eta ^j+C_{l\bar{s}\bar{h}}=\dot{\partial}_{\bar{h}}(C_{j\bar{s}%
l}\eta ^j)+C_{l\bar{s}\bar{h}}=C_{l\bar{s}\bar{h}}.$ Using (\ref{1.13}) and $%
N_j^i\eta ^j=\stackrel{c}{N_j^i}\eta ^j$ we obtain $2(\dot{\partial}_{\bar{h}%
}G^i)g_{i\bar{r}}=\stackrel{c}{\delta _k}(C_{l\bar{r}\bar{h}}\eta ^l)\eta
^k=\delta _k(C_{l\bar{r}\bar{h}}\eta ^l)\eta ^k$, which together with i) and the
$h$ - covariant derivative rule with respect to $C-F$ connection gives iii).

iv) Using again $g_{i\bar{j}\stackrel{B}{|}k}=\stackrel{c}{\delta _k}g_{i%
\bar{j}}-\stackrel{B}{L_{ik}^l}g_{l\bar{j}}-\stackrel{B}{L_{\bar{j}k}^{\bar{m%
}}}g_{i\bar{m}}$, which differentiated with respect to $\eta ^h$, gives

$\dot{\partial}_h(g_{i\bar{j}\stackrel{B}{|}k})=\frac{\partial C_{i\bar{j}h}%
}{\partial z^k}-\stackrel{B\;\;}{L_{hk}^l}C_{i\bar{j}l}-\stackrel{c}{N_k^l}(%
\dot{\partial}_hC_{i\bar{j}l})-(\dot{\partial}_h\stackrel{B\;\;}{L_{ik}^l)}%
g_{l\bar{j}}-\stackrel{B\;\;}{L_{ik}^l}C_{l\bar{j}h}$

$-(\dot{\partial}_h\stackrel{B\;\;}{L_{\bar{j}k}^{\bar{m}})}g_{i\bar{m}}-%
\stackrel{B\;\;}{L_{\bar{j}k}^{\bar{p}}}C_{i\bar{m}h}$

$=\stackrel{c}{\delta _k}C_{i\bar{j}h}-\stackrel{B\;\;}{L_{hk}^l}C_{i\bar{j}%
l}-\stackrel{B\;\;}{L_{ik}^l}C_{l\bar{j}h}-\stackrel{B\;\;}{L_{\bar{j}k}^{%
\bar{m}}}C_{i\bar{m}h}-(\dot{\partial}_h\stackrel{B\;\;}{L_{ik}^l)}g_{l\bar{j%
}}-(\dot{\partial}_h\stackrel{B\;\;}{L_{\bar{j}k}^{\bar{m}})}g_{i\bar{m}}$

$=C_{i\bar{j}h\stackrel{B}{|}k}\;-(\dot{\partial}_h\stackrel{B\;\;}{L_{ik}^l)%
}g_{l\bar{j}}-(\dot{\partial}_h\stackrel{B\;\;}{L_{\bar{j}k}^{\bar{m}})}g_{i%
\bar{m}},$ that is iv).

For v) we compute

$\dot{\partial}_{\bar{h}}(g_{i\bar{j}\stackrel{B}{|}k})=\frac{\partial C_{i%
\bar{j}\bar{h}}}{\partial z^k}-\stackrel{B\;\;}{L_{k\bar{h}}^l}C_{i\bar{j}%
l}-N_k^l(\dot{\partial}_lC_{i\bar{j}\bar{h}})-(\dot{\partial}_{\bar{h}}%
\stackrel{B\;\;}{L_{ik}^l)}g_{l\bar{j}}-\stackrel{B\;\;}{L_{ik}^l}C_{l\bar{j}%
\bar{h}}$

$-(\dot{\partial}_{\bar{h}}\stackrel{B\;\;}{L_{\bar{j}k}^{\bar{m}})}g_{i\bar{%
m}}-\stackrel{B\;\;}{L_{\bar{j}k}^{\bar{m}}}C_{i\bar{p}\bar{m}}$

$=\delta _kC_{i\bar{j}\bar{h}}-\stackrel{B\;\;}{L_{ik}^l}C_{l\bar{j}\bar{h}}-%
\stackrel{B\;\;}{L_{\bar{h}k}^{\bar{m}}}C_{i\bar{j}\bar{m}}-\stackrel{B\;\;}{%
L_{\bar{j}k}^{\bar{m}}}C_{i\bar{m}\bar{h}}-\stackrel{B\;\;}{L_{k\bar{h}}^l}%
C_{i\bar{j}l}+\stackrel{B\;\;}{L_{\bar{h}k}^{\bar{m}}}C_{i\bar{j}\bar{m}}$

$-(\dot{\partial}_{\bar{h}}\stackrel{B\;\;}{L_{ik}^l)}g_{l\bar{j}}-(\dot{%
\partial}_{\bar{h}}\stackrel{B\;\;}{L_{\bar{j}k}^{\bar{m}})}g_{i\bar{m}}$

$=C_{i\bar{j}\bar{h}\stackrel{B}{|}k}-\stackrel{B\;\;}{L_{k\bar{h}}^l}C_{i%
\bar{j}l}+\stackrel{B\;\;}{L_{\bar{h}k}^{\bar{m}}}C_{i\bar{j}\bar{m}}-(\dot{%
\partial}_{\bar{h}}\stackrel{B\;\;}{L_{ik}^l)}g_{l\bar{j}}-(\dot{\partial}_{%
\bar{h}}\stackrel{B\;\;}{L_{\bar{j}k}^{\bar{m}})}g_{i\bar{m}}.$
\end{proof}

\section{The complex Landsberg spaces}

\setcounter{equation}{0}In real Finsler geometry the classes of Landsberg
and Berwald spaces are related by Rund and Berwald connections, \cite{Be}.
Namely, a real Finsler space is Landsberg if the Berwald and Rund
connections coincide. Nevertheless, in complex Finsler geometry some
differences appear. We will speak about of three kinds of complex spaces of
Landsberg type.

\begin{definition}
Let $(M,F)$ be a $n$ - dimensional complex Finsler space. $(M,F)$ is called
complex Landsberg space if $\stackrel{B}{L_{jk}^{i}}=\stackrel{c}{L_{jk}^{i}}
$.
\end{definition}

We remark that any complex Finsler space which is K\"{a}hler is a Landsberg
space too, because if $(M,F)$ is K\"{a}hler, then $L_{jk}^{i}=\stackrel{c}{%
L_{jk}^{i}}=\stackrel{B}{L_{jk}^{i}}$ and so it is Landsberg. Thus, the
K\"{a}hler spaces offer a asset family of complex Landsberg spaces.

\begin{theorem}
Let $(M,F)$ be a $n$ - dimensional complex Finsler space. Then the following
assertions are equivalent:

i) $(M,F)$ is a complex Landsberg space;

ii) $C_{l\bar{r}h\stackrel{B}{|}0}=0;$

iii) $2(\dot{\partial}_{h}\stackrel{B}{L_{jk}^{i}})g_{i\bar{r}}-\stackrel{B}{%
L_{\bar{r}k}^{\bar{m}}}C_{j\bar{m}h}-\stackrel{B}{L_{\bar{r}j}^{\bar{m}}}C_{k%
\bar{m}h}=C_{j\bar{r}h\stackrel{B}{|}k}+C_{k\bar{r}h\stackrel{B}{|}j};$

iv) $g_{i\bar{j}\stackrel{B}{|}k}=\stackrel{c}{(L_{\bar{j}k}^{\bar{m}}}-%
\stackrel{B}{L_{\bar{j}k}^{\bar{m}}})g_{i\bar{m}}.$
\end{theorem}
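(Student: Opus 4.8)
The plan is to prove the chain $i) \Leftrightarrow ii) \Leftrightarrow iii) \Leftrightarrow iv)$ by exploiting the two expressions already available for the Cartan tensor derivatives. The starting observation is that by definition $(M,F)$ is complex Landsberg iff $\stackrel{B}{L_{jk}^{i}} - \stackrel{c}{L_{jk}^{i}} = 0$, and since both $\stackrel{B}{L_{jk}^{i}}$ and $\stackrel{c}{L_{jk}^{i}}$ are symmetric in $j,k$, it suffices to detect this difference through a suitable contraction with $g_{i\bar r}$. So the first step is to write out $\stackrel{c}{L_{jk}^{i}} g_{i\bar r} = \frac12(\stackrel{c}{\delta_k} g_{j\bar r} + \stackrel{c}{\delta_j} g_{k\bar r})$ and compare it with $\stackrel{B}{L_{jk}^{i}} g_{i\bar r}$, for which (\ref{1.12}) in the proof of Lemma 2.2 ii) already gives an explicit formula. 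Subtracting, the combination $2(\stackrel{B}{L_{jk}^{i}} - \stackrel{c}{L_{jk}^{i}})g_{i\bar r}$ equals $(\stackrel{c}{\delta_k} C_{j\bar r h})\eta^k$-type terms; more precisely, contracting (\ref{1.12}) with suitable indices and using Lemma 2.2 ii), one identifies $2(\stackrel{B}{L_{jk}^{i}} - \stackrel{c}{L_{jk}^{i}})g_{i\bar r}\eta^j = C_{l\bar r h \stackrel{B}{|}0}$ up to sign. This yields $i) \Leftrightarrow ii)$ once one checks that vanishing of the fully contracted object forces vanishing of the symmetric tensor itself — here the homogeneity relations $\frac{\partial g_{i\bar j}}{\partial\eta^k}\eta^k = 0$ and Euler-type arguments ($\stackrel{B}{L_{jk}^{i}}\eta^j\eta^k = 2G^i$, differentiation in $\eta$) are used to recover $\stackrel{B}{L_{jk}^{i}} - \stackrel{c}{L_{jk}^{i}}$ from its $\eta$-contraction.

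For $ii) \Leftrightarrow iii)$ the idea is to differentiate the identity in Lemma 2.2 ii) — or rather the defining relation $-C_{l\bar r h\stackrel{B}{|}0} = g_{l\bar r\stackrel{B}{|}h} + g_{h\bar r\stackrel{B}{|}l} + \stackrel{B}{L_{\bar r h}^{\bar m}}g_{l\bar m} + \stackrel{B}{L_{\bar r l}^{\bar m}}g_{h\bar m}$ — and re-express the right-hand side. I would take $C_{l\bar r h\stackrel{B}{|}0} = (\stackrel{c}{\delta_k}C_{l\bar r h})\eta^k$ expanded via the adapted frame, then use Lemma 2.2 iv) which relates $C_{i\bar j h\stackrel{B}{|}k}$ to $\dot\partial_h(g_{i\bar j\stackrel{B}{|}k})$ plus $(\dot\partial_h \stackrel{B}{L_{ik}^l})g_{l\bar j}$-terms. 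Contracting (\ref{1.11}) — which expresses $\stackrel{B}{L_{hl}^i}g_{i\bar r}$ in terms of second derivatives of the metric and Cartan-tensor derivatives — with $\eta$'s and differentiating once more in $\eta^h$ should produce exactly the combination in iii). The precise bookkeeping is to show $C_{l\bar r h\stackrel{B}{|}0} = 0$ iff $C_{j\bar r h\stackrel{B}{|}k} + C_{k\bar r h\stackrel{B}{|}j} = 2(\dot\partial_h\stackrel{B}{L_{jk}^i})g_{i\bar r} - \stackrel{B}{L_{\bar r k}^{\bar m}}C_{j\bar m h} - \stackrel{B}{L_{\bar r j}^{\bar m}}C_{k\bar m h}$, i.e. iii) is just the "polarization" of ii) in the lower indices $j,k$ together with Lemma 2.2 iv). One direction (iii) $\Rightarrow$ ii)) is immediate by setting $j=k$ and contracting with $\eta^j\eta^k$ and using $\dot\partial_h(\stackrel{B}{L_{jk}^i}\eta^j\eta^k) = \dot\partial_h(2G^i)$; the converse uses symmetry in $j,k$ and that a symmetric object vanishing after $\eta\eta$-contraction need not vanish, so here one must instead differentiate the $\eta$-contracted identity ii) twice in $\eta$ and carefully track the Cartan correction terms.

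For $ii) \Leftrightarrow iv)$ the route is to relate $C_{l\bar r h\stackrel{B}{|}0}$ to $g_{i\bar j\stackrel{B}{|}k}$ directly. Starting from Lemma 2.2 ii), $-C_{l\bar r h\stackrel{B}{|}0} = g_{l\bar r\stackrel{B}{|}h} + g_{h\bar r\stackrel{B}{|}l} + \stackrel{B}{L_{\bar r h}^{\bar m}}g_{l\bar m} + \stackrel{B}{L_{\bar r l}^{\bar m}}g_{h\bar m}$, I would rewrite $g_{l\bar r\stackrel{B}{|}h} = \stackrel{c}{\delta_h}g_{l\bar r} - \stackrel{B}{L_{lh}^i}g_{i\bar r} - \stackrel{B}{L_{\bar r h}^{\bar m}}g_{l\bar m}$ and compare with the $h$-metrical property of $R\Gamma$, namely $\stackrel{c}{\delta_h}g_{l\bar r} = \stackrel{c}{L_{lh}^i}g_{i\bar r} + \stackrel{c}{L_{\bar r h}^{\bar m}}g_{l\bar m}$. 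Substituting gives $g_{l\bar r\stackrel{B}{|}h} = (\stackrel{c}{L_{lh}^i} - \stackrel{B}{L_{lh}^i})g_{i\bar r} + (\stackrel{c}{L_{\bar r h}^{\bar m}} - \stackrel{B}{L_{\bar r h}^{\bar m}})g_{l\bar m}$, so $g_{i\bar j\stackrel{B}{|}k} = (\stackrel{c}{L_{ik}^l} - \stackrel{B}{L_{ik}^l})g_{l\bar j} + (\stackrel{c}{L_{\bar j k}^{\bar m}} - \stackrel{B}{L_{\bar j k}^{\bar m}})g_{i\bar m}$; then iv) holds iff $\stackrel{c}{L_{ik}^l} = \stackrel{B}{L_{ik}^l}$, which is precisely $i)$, and hence iff $ii)$ via the equivalence already established. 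The main obstacle I anticipate is the step where one must pass from the $\eta$-contracted vanishing in ii) back to the full symmetric identity iii): this requires differentiating in $\eta^j,\eta^k$ while keeping exact track of all Cartan-tensor correction terms coming from the non-$v$-metrical nature of $B\Gamma$, and one has to verify that the $\stackrel{B}{L_{\bar r k}^{\bar m}}C_{j\bar m h}$-type terms assemble correctly — a careful but essentially mechanical computation using Lemma 2.1, Lemma 2.2, and the homogeneity identities. The equivalence $ii)\Leftrightarrow iv)$ is the cleanest and I would present it first as a bridge, then derive iii) from ii).
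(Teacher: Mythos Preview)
Your plan invokes exactly the same ingredients as the paper (Lemma~2.2~ii), the $h$-metricality of $R\Gamma$, differentiation and contraction in $\eta$), and your treatment of $i)\Leftrightarrow iv)$ via
\[
g_{i\bar j\stackrel{B}{|}k}=(\stackrel{c}{L_{ik}^l}-\stackrel{B}{L_{ik}^l})g_{l\bar j}+(\stackrel{c}{L_{\bar j k}^{\bar m}}-\stackrel{B}{L_{\bar j k}^{\bar m}})g_{i\bar m}
\]
is identical to the paper's.

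However, you have misread the index structure in $i)\Leftrightarrow ii)$ and created an artificial difficulty. Carrying out the computation you describe (combine Lemma~2.2~ii) with $\stackrel{c}{L_{lh}^i}g_{i\bar r}=\tfrac12(\stackrel{c}{\delta_h}g_{l\bar r}+\stackrel{c}{\delta_l}g_{h\bar r})$) gives the \emph{tensorial} identity
\[
-C_{l\bar r h\stackrel{B}{|}0}=2(\stackrel{c}{L_{lh}^i}-\stackrel{B}{L_{lh}^i})g_{i\bar r},
\]
with free indices $l,h,\bar r$ and \emph{no} residual $\eta$-contraction. So $ii)\Leftrightarrow i)$ follows at once from nondegeneracy of $g_{i\bar r}$; the ``Euler-type recovery'' you anticipate is unnecessary (and, as you yourself note, would be genuinely problematic if it were needed, since a symmetric $0$-homogeneous tensor is not determined by its $\eta$-contraction).

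This also simplifies your route to $iii)$. The paper does not attempt $ii)\Rightarrow iii)$ directly by double differentiation; instead it goes $i)\Rightarrow iii)$ by differentiating the Landsberg identity $\stackrel{B}{L_{jk}^i}g_{i\bar r}=\tfrac12(\stackrel{c}{\delta_k}g_{j\bar r}+\stackrel{c}{\delta_j}g_{k\bar r})$ once in $\eta^h$, and closes the loop with $iii)\Rightarrow ii)$ by contracting $iii)$ with a single $\eta^k$ and using $\stackrel{B}{L_{\bar r k}^{\bar m}}\eta^k=0$, $C_{k\bar m h}\eta^k=0$, and $(\dot\partial_h\stackrel{B}{L_{jk}^i})\eta^k=0$. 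Your proposed ``polarization'' of $ii)$ is a detour once you realize $i)\Leftrightarrow ii)$ is already pointwise.
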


\begin{proof}
i) $\Leftrightarrow $ ii). A direct computation gives
\[
g_{l\bar{r}\stackrel{B}{|}h}+g_{h\bar{r}\stackrel{B}{|}l}+\stackrel{B}{L_{%
\bar{r}h}^{\bar{m}}}g_{l\bar{m}}+\stackrel{B}{L_{\bar{r}l}^{\bar{m}}}g_{h%
\bar{m}}=2(\stackrel{c}{L_{lh}^i}g_{i\bar{r}}-\stackrel{B}{L_{lh}^i}g_{i\bar{%
r}})
\]
which, together with Lemma 2.2 ii), get the proof.

i) $\Rightarrow $ iii). Because $(M,F)$ is Landsberg we have
\[
\stackrel{B}{L_{jk}^i}g_{i\bar{r}}=\frac 12(\stackrel{c}{\delta _k}g_{j%
\overline{r}}+\stackrel{c}{\delta _j}g_{k\overline{r}}).
\]
Differentiating it with respect to $\eta ^h$ yields iii).

iii) $\Rightarrow $ ii). Contracting iii) with $\eta ^k$ gives $2(\dot{%
\partial}_h\stackrel{B}{L_{jk}^i})g_{i\bar{r}}\eta ^k=C_{j\bar{r}h\stackrel{B%
}{|}0}.$ On the other hand, $(\dot{\partial}_h\stackrel{B}{L_{jk}^i})g_{i%
\bar{r}}\eta ^k=0.$ From here it results ii).

i) $\Leftrightarrow $ iv). $g_{i\bar{j}\stackrel{B}{|}k}=\stackrel{c}{\delta
_k}g_{i\bar{j}}-\stackrel{B}{L_{ik}^l}g_{l\bar{j}}-\stackrel{B}{L_{\bar{j}%
k}^{\bar{m}}}g_{i\bar{m}}$

$=g_{i\bar{j}\stackrel{R}{|}k}+(\stackrel{c}{L_{ik}^l}-\stackrel{B}{L_{ik}^l}%
)g_{l\bar{j}}+(\stackrel{c}{L_{\bar{j}k}^{\bar{m}}}-\stackrel{B}{L_{\bar{j}%
k}^{\bar{m}}})g_{i\bar{m}},$ where $\stackrel{R}{\shortmid }$ is $h-$
covariant derivative with respect to $R\Gamma $ connection. But, $R\Gamma $
connection is $h-$ metrical, therefore $g_{i\bar{j}\stackrel{B}{|}k}=(%
\stackrel{c}{L_{ik}^l}-\stackrel{B}{L_{ik}^l})g_{l\bar{j}}-(\stackrel{c}{L_{%
\bar{j}k}^{\bar{m}}}-\stackrel{B}{L_{\bar{j}k}^{\bar{m}}})g_{i\bar{m}},$
which justifies this equivalence.
\end{proof}

\begin{definition}
Let $(M,F)$ be a $n$ - dimensional complex Finsler space. $(M,F)$ is called $%
G$ - Landsberg space if it is Landsberg and the spray coefficients $G^{i}$
are holomorphic in $\eta $, i.e. $\dot{\partial}_{\bar{k}}G^{i}=0$.
\end{definition}

Some immediately consequences follow below.

\begin{proposition}
If $(M,F)$ is a $G$ - Landsberg space then the connection $B\Gamma $ is of $%
(1,0)$ - type.
\end{proposition}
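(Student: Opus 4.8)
The plan is to reduce the proposition to the vanishing of a single block of coefficients of $B\Gamma$. For a complex linear connection whose coefficients have the displayed form $\left(\stackrel{c}{N_{j}^{i}},\stackrel{B}{L_{jk}^{i}},\stackrel{B}{L_{j\bar{k}}^{i}},0,0\right)$, being of $(1,0)$ - type means precisely that the connection $1$ - forms carry no $d\bar{z}$ - part, i.e. that the mixed horizontal coefficient $\stackrel{B}{L_{j\bar{k}}^{i}}$ vanishes (equivalently, by conjugation, $\stackrel{B}{L_{\bar{j}k}^{\bar{\imath}}}=0$). So all that has to be checked is the identity $\stackrel{B}{L_{j\bar{k}}^{i}}=0$ under the $G$ - Landsberg hypothesis.

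First I would simply unwind the definitions: $\stackrel{B}{L_{j\bar{k}}^{i}}:=\dot{\partial}_{\bar{k}}\stackrel{c}{N_{j}^{i}}$ and $\stackrel{c}{N_{j}^{i}}:=\dot{\partial}_{j}G^{i}$, so that $\stackrel{B}{L_{j\bar{k}}^{i}}=\dot{\partial}_{\bar{k}}\dot{\partial}_{j}G^{i}$. Since $\dot{\partial}_{j}=\partial/\partial\eta^{j}$ and $\dot{\partial}_{\bar{k}}=\partial/\partial\bar{\eta}^{k}$ are partial derivatives with respect to independent fibre coordinates, they commute, whence $\stackrel{B}{L_{j\bar{k}}^{i}}=\dot{\partial}_{j}(\dot{\partial}_{\bar{k}}G^{i})$. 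By Definition 3.2 a $G$ - Landsberg space satisfies $\dot{\partial}_{\bar{k}}G^{i}=0$ (the spray coefficients are holomorphic in $\eta$), hence $\stackrel{B}{L_{j\bar{k}}^{i}}=0$, and conjugation yields $\stackrel{B}{L_{\bar{j}k}^{\bar{\imath}}}=0$ as well. Thus every $(0,1)$ - component of the $B\Gamma$ connection forms vanishes and $B\Gamma$ is of $(1,0)$ - type. One could also argue through Lemma 2.2 iii), $2(\dot{\partial}_{\bar{h}}G^{i})g_{i\bar{r}}=C_{0\bar{r}\bar{h}\stackrel{B}{|}0}$, combined with the positive definiteness of $(g_{i\bar{r}})$, but the route through the definitions is the shortest.

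I do not expect a genuine obstacle here: once the characterisation of $(1,0)$ - type as $\stackrel{B}{L_{j\bar{k}}^{i}}=0$ is recorded, the argument is a two - line computation. The only thing worth emphasising is that the Landsberg half of the definition of a $G$ - Landsberg space plays no role; only the holomorphy condition $\dot{\partial}_{\bar{k}}G^{i}=0$ is used, so the proposition is really a statement about the canonical spray rather than about the coincidence of the Berwald and Rund connections.
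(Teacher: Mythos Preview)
Your argument is correct and matches the paper's treatment: the paper lists Proposition 3.1 among ``immediately consequences'' of Definition 3.2 without writing a separate proof, and your unwinding $\stackrel{B}{L_{j\bar{k}}^{i}}=\dot{\partial}_{\bar{k}}\dot{\partial}_{j}G^{i}=\dot{\partial}_{j}(\dot{\partial}_{\bar{k}}G^{i})=0$ is exactly why it is immediate. Your closing remark that only the holomorphy condition $\dot{\partial}_{\bar{k}}G^{i}=0$ is used, and not the Landsberg condition, is precisely the content of the paper's next statement, Corollary 3.1.
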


\begin{corollary}
$G^{i}$ are holomorphic in $\eta $ if and only if the connection $B\Gamma $
is of $(1,0)$ - type.
\end{corollary}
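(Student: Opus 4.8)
The claim is an equivalence: $G^i$ holomorphic in $\eta$ (i.e.\ $\dot{\partial}_{\bar k}G^i=0$) $\iff$ $B\Gamma$ is of $(1,0)$-type. The plan is to recall that a complex linear connection written in the adapted frame as $(\stackrel{c}{N_j^i},\stackrel{B}{L_{jk}^i},\stackrel{B}{L_{j\bar k}^i},0,0)$ is of $(1,0)$-type precisely when the ``mixed'' horizontal coefficients vanish, i.e.\ when $\stackrel{B}{L_{j\bar k}^i}=0$ (the other mixed pieces are already $0$ by construction, and $\stackrel{c}{N_j^i}=\dot\partial_j G^i$ needs to have $\dot\partial_{\bar k}\stackrel{c}{N_j^i}=0$, which is the same condition since $\stackrel{B}{L_{j\bar k}^i}:=\dot\partial_{\bar k}\stackrel{c}{N_j^i}$). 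So the statement reduces to: $\dot\partial_{\bar k}G^i=0$ for all $k$ $\iff$ $\stackrel{B}{L_{j\bar k}^i}=0$ for all $j,k$.

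\emph{Forward direction} ($\Rightarrow$). If $\dot\partial_{\bar k}G^i=0$, then differentiating with respect to $\eta^j$ gives $\stackrel{B}{L_{j\bar k}^i}=\dot\partial_j\dot\partial_{\bar k}G^i=\dot\partial_{\bar k}(\dot\partial_j G^i)=\dot\partial_{\bar k}\stackrel{c}{N_j^i}=0$ (mixed partials commute). Hence all mixed coefficients of $B\Gamma$ vanish and $B\Gamma$ is of $(1,0)$-type. This is the easy half and is essentially what Proposition 3.1 already used implicitly.

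\emph{Converse direction} ($\Leftarrow$). Suppose $B\Gamma$ is of $(1,0)$-type, so $\stackrel{B}{L_{j\bar k}^i}=\dot\partial_{\bar k}\stackrel{c}{N_j^i}=\dot\partial_j\dot\partial_{\bar k}G^i=0$ for all $j,k$. I want to conclude $\dot\partial_{\bar k}G^i=0$. The key point is the homogeneity of $G^i$: since $G^i=\tfrac12 N_j^i\eta^j$ with $N_j^i$ homogeneous of degree $1$ in $\eta$ (by the $iii)$ homogeneity relations, $\tfrac{\partial g_{l\bar m}}{\partial z^j}$ contracted with $\eta^l$ is degree $1$), $G^i$ is homogeneous of degree $2$ in $\eta$; consequently $\dot\partial_{\bar k}G^i$ is homogeneous of degree $1$ in $\bar\eta$ and degree $2$ in $\eta$ jointly — more precisely $\dot\partial_{\bar k}G^i$ is homogeneous of degree $2$ under the real scaling coming from $iii)$. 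From $\dot\partial_j\dot\partial_{\bar k}G^i=0$ for all $j$, the function $\dot\partial_{\bar k}G^i$ does not depend on $\eta$; but a function independent of $\eta$ that is homogeneous of positive degree in $\eta$ must vanish. Concretely, $\dot\partial_{\bar k}G^i$ is the $\eta$-derivative... wait — better: apply the Euler relation in $\eta$. Since $\dot\partial_j(\dot\partial_{\bar k}G^i)=0$, contracting with $\eta^j$ gives $\eta^j\dot\partial_j(\dot\partial_{\bar k}G^i)=0$; but $\dot\partial_{\bar k}G^i$ is homogeneous of degree $1$ in $\eta$ (one $\eta$-derivative removed from a degree-$2$ object — here I must be careful that the homogeneity in $iii)$ is under $\eta\mapsto\lambda\eta$ with real $\lambda$ and the $\bar k$-derivative is unaffected), so Euler's theorem gives $\eta^j\dot\partial_j(\dot\partial_{\bar k}G^i)=1\cdot\dot\partial_{\bar k}G^i$, whence $\dot\partial_{\bar k}G^i=0$.

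\textbf{Main obstacle.} The delicate step is the homogeneity bookkeeping in the converse: one must check that $\dot\partial_{\bar k}G^i$ is genuinely homogeneous of degree $1$ (not $0$) in $\eta$ under the scaling available from property $iii)$, so that Euler's relation forces it to vanish once it is $\eta$-independent. This follows because $2G^i=N_j^i\eta^j$ and $\dot\partial_{\bar k}(N_j^i\eta^j)=(\dot\partial_{\bar k}N_j^i)\eta^j$ is degree $1$ in $\eta$ (since $\dot\partial_{\bar k}N_j^i$ is degree $0$ in $\eta$ and we still carry the factor $\eta^j$), exactly as was observed in the proof of Lemma 2.2 i) where this very expression $\dot\partial_j[(\dot\partial_{\bar k}N_l^i)\eta^l\bar\eta^k]$ was manipulated. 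Everything else is formal: commuting mixed partials and invoking the definition of $(1,0)$-type, both of which are available from the preliminaries.
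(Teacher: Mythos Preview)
Your argument is correct and matches what the paper intends (the paper gives no explicit proof, treating the corollary as immediate from the definitions together with the identity $\stackrel{c}{N_j^i}\eta^j=2G^i$ recorded in the preliminaries). For the converse direction, the cleanest phrasing is simply to contract $\stackrel{B}{L_{j\bar k}^i}=\dot\partial_{\bar k}\stackrel{c}{N_j^i}=0$ with $\eta^j$ and use $\stackrel{c}{N_j^i}\eta^j=2G^i$ to obtain $2\,\dot\partial_{\bar k}G^i=0$; this is exactly your Euler-relation step, stripped of the homogeneity language.

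One small slip to fix: you assert that $\dot\partial_{\bar k}G^i$ is homogeneous of degree $1$ in $\eta$ because ``one $\eta$-derivative [is] removed from a degree-$2$ object.'' But $\dot\partial_{\bar k}$ is a $\bar\eta$-derivative, not an $\eta$-derivative, so the $\eta$-degree is unchanged: $\dot\partial_{\bar k}G^i$ still has $\eta$-degree $2$, and the Euler relation reads $\eta^j\dot\partial_j(\dot\partial_{\bar k}G^i)=2\,\dot\partial_{\bar k}G^i$. Likewise, $\dot\partial_{\bar k}N_j^i$ has $\eta$-degree $1$, not $0$, so $(\dot\partial_{\bar k}N_j^i)\eta^j$ has $\eta$-degree $2$. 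These are harmless slips---any positive degree forces the vanishing---but they should be corrected so the bookkeeping is consistent.
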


\begin{proposition}
$G^{i}$ are holomorphic in $\eta $ if and only if $\stackrel{B}{L_{jk}^{i}}$
depend only on $z$.
\end{proposition}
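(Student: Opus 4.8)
The plan is to prove the two implications separately; both reduce to elementary homogeneity considerations once one uses the identity $2G^{i}=\stackrel{B}{L_{jk}^{i}}\eta ^{j}\eta ^{k}$ recorded in the preliminaries together with the positive homogeneity of $F$ (condition \textit{iii)}).

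If $\stackrel{B}{L_{jk}^{i}}$ depends only on $z$, then by $2G^{i}=\stackrel{B}{L_{jk}^{i}}\eta ^{j}\eta ^{k}$ the spray coefficient $G^{i}=\frac{1}{2}\stackrel{B}{L_{jk}^{i}}(z)\,\eta ^{j}\eta ^{k}$ is a polynomial in $\eta$ with coefficients free of $\eta$, so $\dot{\partial}_{\bar{h}}G^{i}=0$; this direction is immediate. For the converse, assume $\dot{\partial}_{\bar{k}}G^{i}=0$. Then $\stackrel{c}{N_{j}^{i}}=\dot{\partial}_{j}G^{i}$ is holomorphic in $\eta$, and in particular $\stackrel{B}{L_{j\bar{k}}^{i}}=\dot{\partial}_{\bar{k}}\dot{\partial}_{j}G^{i}=0$. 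On the other hand $\stackrel{c}{N_{j}^{i}}$ is homogeneous of degree one in $\eta$: this is standard and follows from condition \textit{iii)} via $\dot{\partial}_{k}g_{i\bar{m}}\,\eta ^{k}=0$, which gives $\eta ^{k}\dot{\partial}_{k}N_{j}^{i}=N_{j}^{i}$ and hence, using $\stackrel{c}{N_{j}^{i}}\eta ^{j}=N_{j}^{i}\eta ^{j}=2G^{i}$, also $\eta ^{k}\dot{\partial}_{k}\stackrel{c}{N_{j}^{i}}=\stackrel{c}{N_{j}^{i}}$. A function which is holomorphic in $\eta$ on a fiber and positively homogeneous of degree one there is linear in $\eta$; therefore $\stackrel{c}{N_{j}^{i}}=(\dot{\partial}_{k}\stackrel{c}{N_{j}^{i}})\,\eta ^{k}$ with $\dot{\partial}_{k}\stackrel{c}{N_{j}^{i}}=\stackrel{B}{L_{jk}^{i}}$ independent of $\eta$, i.e. $\stackrel{B}{L_{jk}^{i}}=\stackrel{B}{L_{jk}^{i}}(z)$.

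The one step that is not routine bookkeeping is exactly the claim just invoked: a function holomorphic in $\eta$ on the connected punctured fiber $\widetilde{T_{z}^{\prime }M}\cong \mathbb{C}^{n}\setminus \{0\}$ and positively homogeneous of nonnegative integer degree must be a homogeneous polynomial of that degree in $\eta$. For $n\geq 2$ it extends holomorphically across the origin by Hartogs' theorem, and a holomorphic germ of homogeneous degree $d$ coincides with its degree-$d$ Taylor polynomial, whose coefficients depend only on $z$; for $n=1$ one observes directly that, in our case, $\stackrel{c}{N_{j}^{i}}(z,\eta)=\eta\,\dot{\partial}_{\eta }\stackrel{c}{N_{j}^{i}}(z,\eta)$ with $\dot{\partial}_{\eta }\stackrel{c}{N_{j}^{i}}$ homogeneous of degree zero and holomorphic on $\mathbb{C}\setminus \{0\}$, hence constant in $\eta$. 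I expect this homogeneity-plus-holomorphy argument to be the only substantive part; everything else (the computation $\eta^{k}\dot\partial_{k}N^{i}_{j}=N^{i}_{j}$ and the identity $2G^{i}=\stackrel{B}{L_{jk}^{i}}\eta^{j}\eta^{k}$) is mechanical. Alternatively, since $G^{i}$ is homogeneous of degree two, one can run the same argument one order higher, directly on $\stackrel{B}{L_{jk}^{i}}=\dot{\partial}_{j}\dot{\partial}_{k}G^{i}$, which is then holomorphic and homogeneous of degree zero in $\eta$, and conclude at once.
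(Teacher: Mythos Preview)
Your argument is correct and reaches the same conclusion as the paper, but the key step---showing that a $0$-homogeneous function holomorphic in $\eta$ on the punctured fiber must be constant in $\eta$---is handled by a different tool. The paper (following the argument of Chen--Shen cited as Proposition~1.1 in \cite{C-S2}) works directly with $\stackrel{B}{L_{jk}^{i}}$: since these functions are $0$-homogeneous, on any annular shell $D_{\varepsilon}\setminus D_{1/\varepsilon}$ in the fiber their modulus attains its supremum at an interior point, and the strong maximum principle for holomorphic functions forces them to be constant; letting $\varepsilon\to\infty$ covers the whole punctured fiber. You instead invoke Hartogs' extension (for $n\geq 2$) to pass to an entire function and read off the conclusion from the Taylor expansion, treating $n=1$ separately by a direct one-variable argument. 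Both routes are sound; the paper's maximum-principle approach has the advantage of being uniform in the fiber dimension and avoiding the $n=1$ case distinction, whereas your Hartogs/Taylor approach is perhaps more widely known and makes the polynomial nature of $G^{i}$ explicit. A minor remark: your derivation of the Euler relation $\eta^{k}\dot{\partial}_{k}\stackrel{c}{N_{j}^{i}}=\stackrel{c}{N_{j}^{i}}$ is written a bit circuitously via $N_{j}^{i}$; it follows in one line from $\eta^{k}\dot{\partial}_{k}G^{i}=2G^{i}$ by differentiation. The paper also records the chart-independence of the conclusion via the transformation law of $\stackrel{B}{L_{jk}^{i}}$, a point you do not mention but which is immediate.
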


\begin{proof}
If $G^i$ are holomorphic functions in $\eta $, then $\dot{%
\partial}_{\bar{k}}G^i=0$ which leads to $\dot{\partial}_{\bar{k}}\stackrel{c%
}{N_h^i}=0$ and more $\dot{\partial}_{\bar{k}}\stackrel{B}{L_{jh}^i}=0$.
Hence, the functions $\stackrel{B}{L_{jh}^i}$ are holomorphic in $\eta ,$ too.

Now, we make a similar reasoning like that in \cite{C-S2}, Proposition 1.1,
but for the $0-$homogeneous functions $\stackrel{B}{L_{jh}^i}$ in $\eta$.
We consider $D_\varepsilon :=\{\eta \in T_z^{\prime }M$ $|$ $\mathcal{G}%
(\eta ,\bar{\eta})<\varepsilon ^2,$ $\varepsilon >0\}$ and we study the
functions $\stackrel{B}{L_{jh}^i}$ on the domain $D_\varepsilon $ $%
\backslash $ $D_{\frac 1\varepsilon }.$ Because these functions are $0$ -
homogeneous, their modulus achieve maximum at an interior point of $%
D_\varepsilon $ $\backslash $ $D_{\frac 1\varepsilon }.$ Thus, we can apply
the strong maximum principle which gives that the functions $\stackrel{B}{%
L_{jh}^i}$ are constant with respect to $\eta $ on $D_\varepsilon $ $%
\backslash $ $D_{\frac 1\varepsilon }.$ Now, making $\varepsilon \rightarrow
$ $\infty $ it results the functions $\stackrel{B}{L_{jh}^i}$ depend on $z$
only on $T_z^{\prime }M$ $\backslash $ $\{0\}.$ Under a change of the local
coordinates $(z^i,\eta ^i)$ into $(z^{\prime i},\eta ^{\prime i}),$ the
functions $\stackrel{B}{L_{jh}^i}$ are modified by the rule $\stackrel{B}{%
L_{jk}^{\prime i}}=\frac{\partial z^{\prime i}}{\partial z^r}\frac{\partial
z^s}{\partial z^{\prime j}}\frac{\partial z^q}{\partial z^{\prime k}}%
\stackrel{B}{L_{sq}^r}+\frac{\partial z^{\prime i}}{\partial z^r}\frac{%
\partial ^2z^r}{\partial z^{\prime j}\partial z^{\prime k}}.$ It results
that $\stackrel{B}{L_{jk}^{\prime i}}$ depend only on $z,$ too. Thus, globally we have $%
\stackrel{B}{L_{jk}^i}(z).$

Conversely, if $\stackrel{B}{L_{jk}^i}(z)$ then $\dot{\partial}_{\bar{k}}%
\stackrel{B}{L_{jh}^i}=0$, which contracted by $\eta ^j\eta ^h$ complete the
proof.
\end{proof}

\begin{corollary}
$\stackrel{B}{L_{jk}^{i}}$ depend only on $z$ if and only if $\dot{\partial}%
_{k}\stackrel{B}{L_{jh}^{i}}=0.$
\end{corollary}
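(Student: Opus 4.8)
The implication ``$\Rightarrow$'' is trivial: if $\stackrel{B}{L_{jk}^{i}}$ depend only on $z$ they are independent of $\eta$, hence $\dot{\partial}_{k}\stackrel{B}{L_{jh}^{i}}=0$.

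For ``$\Leftarrow$'' suppose $\dot{\partial}_{k}\stackrel{B}{L_{jh}^{i}}=0$. The plan is first to promote this $\eta$-independence to full independence of the fibre variable, and then to conclude (either directly, or by noting that $\dot{\partial}_{\bar{k}}\stackrel{B}{L_{jh}^{i}}=0$ and invoking the preceding Proposition). I would use two facts. On the one hand, $\dot{\partial}_{k}\stackrel{B}{L_{jh}^{i}}=0$ says precisely that the functions $\stackrel{B}{L_{jh}^{i}}$ do not depend on $\eta$, equivalently that their conjugates $\overline{\stackrel{B}{L_{jh}^{i}}}$ are holomorphic in $\eta$ on each punctured fibre $T_{z}^{\prime}M\setminus\{0\}$. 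On the other hand, from $\stackrel{B}{L_{jh}^{i}}=\dot{\partial}_{j}\dot{\partial}_{h}G^{i}$, together with $2G^{i}=N_{j}^{i}\eta^{j}$ and the $0$-homogeneity of $g_{i\bar{j}}$, the spray coefficients $G^{i}$ are homogeneous of degree $2$; hence the $\stackrel{B}{L_{jh}^{i}}$ are homogeneous of degree $0$, and, being $\eta$-free, this degree-$0$ homogeneity is carried by the $\bar{\eta}$-variables alone.

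Putting these together, the $\overline{\stackrel{B}{L_{jh}^{i}}}$ are $0$-homogeneous holomorphic functions on the punctured fibre, so I would reproduce the argument from the proof of the preceding Proposition: on the region $D_{\varepsilon}\setminus D_{1/\varepsilon}$ the modulus of such a function attains its maximum at an interior point, so by the maximum principle $\overline{\stackrel{B}{L_{jh}^{i}}}$, and hence $\stackrel{B}{L_{jh}^{i}}$, is constant in $\eta$ there; letting $\varepsilon\to\infty$ it is constant on the whole fibre, and the transformation rule for $\stackrel{B}{L_{jk}^{i}}$ then gives globally that $\stackrel{B}{L_{jh}^{i}}$ depend only on $z$. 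The only substantive step is this last one: the hypothesis kills the $\eta$-dependence outright, but eliminating the residual $\bar{\eta}$-dependence really requires combining holomorphicity with the degree-$0$ homogeneity --- precisely the mechanism (via the maximum principle, or equivalently via Hartogs extension and Liouville's theorem) already used in the proof of the preceding Proposition.
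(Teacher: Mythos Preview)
Your proof is correct and follows essentially the same route as the paper: pass to the conjugates $\overline{\stackrel{B}{L_{jh}^{i}}}=\stackrel{B}{L_{\bar{j}\bar{h}}^{\bar{\imath}}}$, observe they are holomorphic in $\eta$ and $0$-homogeneous, and apply the maximum-principle argument from Proposition~3.2 to kill the remaining fibre dependence. The paper phrases the final step as deducing $\dot{\partial}_{\bar{k}}\stackrel{B}{L_{jh}^{i}}=0$ and then invoking Proposition~3.2 once more, which is exactly the alternative you mention.
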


\begin{proof}
It is obvious that\textbf{\ }$\stackrel{B}{L_{jk}^i}$ depend only
on $z$ implies $\dot{\partial}_k\stackrel{B}{L_{jh}^i}=0.$ Conversely, if $%
\dot{\partial}_k\stackrel{B}{L_{jh}^i}=0,$ by conjugation we have $\dot{%
\partial}_{\bar{k}}\stackrel{B}{L_{\bar{j}\bar{h}}^{\bar{\imath}}}=0,$ i.e. $%
\stackrel{B}{L_{\bar{j}\bar{h}}^{\bar{\imath}}}$ are holomorphic in $\eta .$
But, the functions $\stackrel{B}{L_{\bar{j}\bar{h}}^{\bar{\imath}}}$ are also
$0$ - homogeneous and so, by same arguments as in the above Proposition it
results that $\dot{\partial}_k\stackrel{B}{L_{\bar{j}\bar{h}}^{\bar{\imath}}}%
=0,$ and by conjugation $\dot{\partial}_{\bar{k}}\stackrel{B}{L_{jh}^i}=0.$
Applying again above Proposition we obtain $\stackrel{B}{L_{jk}^i}%
(z).$
\end{proof}

\begin{theorem}
Let $(M,F)$ be a $n$ - dimensional complex Finsler space. Then the following
assertions are equivalent:

i) $(M,F)$ is a $G$ - Landsberg space;

ii) $\stackrel{B}{L_{jk}^{i}}=\stackrel{c}{L_{jk}^{i}}(z);$

iii) $C_{l\bar{r}h\stackrel{B}{|}0}=0$ and $C_{j\bar{0}h\stackrel{B}{|}\bar{0%
}}=0;$

iv) $g_{i\bar{j}\stackrel{B}{|}k}=\stackrel{c}{L_{\bar{j}k}^{\bar{m}}}g_{i%
\bar{m}}$ and $\dot{\partial}_{\bar{h}}G^{i}=0.$

v) $C_{j\bar{r}h\stackrel{B}{|}k}+C_{k\bar{r}h\stackrel{B}{|}j}=0$ and $C_{r%
\bar{l}h\stackrel{B}{|}\bar{k}}+C_{r\bar{k}h\stackrel{B}{|}\bar{l}}=0.$
\end{theorem}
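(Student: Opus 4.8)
The plan is to establish the chain of equivalences by routing everything through the two basic facts already available: the defining equivalence ``Landsberg $\Leftrightarrow$ $C_{l\bar r h\stackrel{B}{|}0}=0$'' from Theorem 3.1, and the several characterizations of ``$G^i$ holomorphic in $\eta$'' from Corollary 3.1, Proposition 3.2 and Corollary 3.2. Since a $G$-Landsberg space is by Definition 3.2 precisely a complex Landsberg space together with $\dot\partial_{\bar k}G^i=0$, each of the statements ii)--v) will be decomposed into a ``Landsberg part'' and a ``holomorphy part,'' and matched piece by piece against i).

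First I would treat i)$\Leftrightarrow$ii). Being Landsberg gives $\stackrel{B}{L_{jk}^i}=\stackrel{c}{L_{jk}^i}$ by Definition 3.1; the extra condition $\dot\partial_{\bar k}G^i=0$ is, by Proposition 3.2, equivalent to $\stackrel{B}{L_{jk}^i}$ depending only on $z$, hence $\stackrel{c}{L_{jk}^i}=\stackrel{c}{L_{jk}^i}(z)$. Conversely, if $\stackrel{B}{L_{jk}^i}=\stackrel{c}{L_{jk}^i}(z)$ then the equality of the two sets of coefficients is the Landsberg condition, and the $z$-dependence gives $\dot\partial_{\bar k}G^i=0$ again via Proposition 3.2. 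Next, i)$\Leftrightarrow$iii): the first equation $C_{l\bar r h\stackrel{B}{|}0}=0$ is exactly the Landsberg condition (Theorem 3.1 ii)). For the second, I would use Lemma 2.2 iii), which reads $2(\dot\partial_{\bar h}G^i)g_{i\bar r}=C_{0\bar r\bar h\stackrel{B}{|}0}$; conjugating this identity (so that $C_{0\bar r\bar h\stackrel{B}{|}0}$ becomes $C_{r\bar0 h\stackrel{B}{|}\bar 0}$, up to the index relabelling used in the statement) and invoking positive-definiteness of $(g_{i\bar r})$ shows $\dot\partial_{\bar h}G^i=0\Leftrightarrow C_{j\bar 0 h\stackrel{B}{|}\bar 0}=0$. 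So iii) is precisely Landsberg plus holomorphy, i.e. $G$-Landsberg.

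For i)$\Leftrightarrow$iv): under the holomorphy assumption $\dot\partial_{\bar h}G^i=0$ one has $\stackrel{B}{L_{\bar jk}^{\bar m}}=\dot\partial_{\bar k}\stackrel{c}{N_{\bar j}^{\bar m}}$-type terms vanishing where appropriate, so Theorem 3.1 iv), namely $g_{i\bar j\stackrel{B}{|}k}=(\stackrel{c}{L_{\bar jk}^{\bar m}}-\stackrel{B}{L_{\bar jk}^{\bar m}})g_{i\bar m}$, collapses exactly to $g_{i\bar j\stackrel{B}{|}k}=\stackrel{c}{L_{\bar jk}^{\bar m}}g_{i\bar m}$; conversely, comparing this with Theorem 3.1 iv) forces $\stackrel{B}{L_{\bar jk}^{\bar m}}g_{i\bar m}=0$, hence $\stackrel{B}{L_{\bar jk}^{\bar m}}=0$ by nondegeneracy, which together with the stated $\dot\partial_{\bar h}G^i=0$ delivers the Landsberg condition. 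Finally i)$\Leftrightarrow$v): the first equation $C_{j\bar r h\stackrel{B}{|}k}+C_{k\bar r h\stackrel{B}{|}j}=0$ is Theorem 3.1 iii) once one observes that under holomorphy the terms $\stackrel{B}{L_{\bar r k}^{\bar m}}C_{j\bar m h}$ and $2(\dot\partial_h\stackrel{B}{L_{jk}^i})g_{i\bar r}$ on the left-hand side of that identity vanish (the former because $\stackrel{B}{L_{\bar rk}^{\bar m}}=0$, the latter is already zero on contraction but here one uses that $\stackrel{B}{L_{jk}^i}$ are $z$-dependent hence $\dot\partial_h\stackrel{B}{L_{jk}^i}=0$ by Corollary 3.2); the second equation is the conjugate statement obtained from differentiating Lemma 2.2 iii) or its analogue with respect to $\eta^h$, encoding $\dot\partial_{\bar h}G^i=0$ in symmetrized $v$-derivative form. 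So again v) splits as Landsberg plus holomorphy.

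The routine obstacle in all of this is bookkeeping: making sure that when $\dot\partial_{\bar k}G^i=0$ is imposed the ``barred'' Berwald coefficients $\stackrel{B}{L_{j\bar k}^i}$ genuinely vanish (they do, since $\stackrel{B}{L_{j\bar k}^i}=\dot\partial_{\bar k}\stackrel{c}{N_j^i}=2\dot\partial_j\dot\partial_{\bar k}G^i$), and tracking the index relabellings between the symmetric forms in Theorem 3.1 and the ``$\bar 0$''-contracted forms in iii) and v). The genuinely delicate point — the only place where more than algebra is used — is already packaged inside Proposition 3.2: deducing from $\dot\partial_{\bar k}\stackrel{B}{L_{jh}^i}=0$ and $0$-homogeneity, via the strong maximum principle on the annular domains $D_\varepsilon\setminus D_{1/\varepsilon}$, that $\stackrel{B}{L_{jh}^i}$ is independent of $\eta$. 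I would simply cite that, so the proof of Theorem 3.2 itself remains a matter of assembling Theorem 3.1, Lemma 2.2 iii), Proposition 3.2 and Corollary 3.2 in the pattern just described.
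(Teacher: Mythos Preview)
Your plan for i)$\Leftrightarrow$ii), i)$\Leftrightarrow$iii), and i)$\Leftrightarrow$iv) matches the paper's argument. One small ordering slip in iv)$\Rightarrow$i): you cannot ``compare with Theorem 3.1 iv)'' to force $\stackrel{B}{L_{\bar jk}^{\bar m}}=0$, since Theorem 3.1 iv) is the \emph{characterization} of Landsberg, not an identity valid in general. The correct order is to derive $\stackrel{B}{L_{\bar jk}^{\bar m}}=\overline{\dot\partial_j\dot\partial_{\bar k}G^m}=0$ directly from the assumed $\dot\partial_{\bar h}G^i=0$; then the first equation of iv) \emph{is} Theorem 3.1 iv), hence Landsberg. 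This is exactly what the paper's one-line ``under $\dot\partial_{\bar h}G^i=0$, Theorem 3.1 i)$\Leftrightarrow$iv) gives the proof'' means.

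The genuine gap is in i)$\Leftrightarrow$v), specifically the second equation $C_{r\bar lh\stackrel{B}{|}\bar k}+C_{r\bar kh\stackrel{B}{|}\bar l}=0$. Your proposed route, ``differentiating Lemma 2.2 iii),'' does not yield this: Lemma 2.2 iii) is the doubly $\eta$-contracted identity $2(\dot\partial_{\bar h}G^i)g_{i\bar r}=C_{0\bar r\bar h\stackrel{B}{|}0}$, and applying $\dot\partial$ to it produces terms with derivatives of $C$ still contracted against $\eta$, not the clean uncontracted symmetrized identity. The paper proceeds differently for i)$\Rightarrow$v): from Lemma 2.2 ii) under $G$-Landsberg one has $g_{l\bar r\stackrel{B}{|}k}+g_{k\bar r\stackrel{B}{|}l}=0$; from Lemma 2.2 v), under holomorphy the extra terms drop and $C_{l\bar r\bar h\stackrel{B}{|}k}=\dot\partial_{\bar h}(g_{l\bar r\stackrel{B}{|}k})$; summing in $l,k$ and conjugating gives the second equation of v), while Lemma 2.2 iv) (together with Proposition 3.2, so that $\dot\partial_h\stackrel{B}{L_{ik}^l}=0$) gives the first. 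For v)$\Rightarrow$i) you also need to say concretely how the two pieces are extracted: contract the first equation with $\eta^k$ (using $C_{0\bar rh}=0$) to get $C_{j\bar rh\stackrel{B}{|}0}=0$, hence Landsberg; contract the second with $\bar\eta^k\bar\eta^l$ to get $C_{r\bar 0h\stackrel{B}{|}\bar 0}=0$, conjugate, and then Lemma 2.2 iii) gives $\dot\partial_{\bar h}G^i=0$.
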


\begin{proof}
i) $\Leftrightarrow $ ii) is obtained by Proposition 3.1. i) $%
\Leftrightarrow $ iii) results by Lemma 2.2 iii) and Theorem 3.1 ii). Under
assumptions $\dot{\partial}_{\bar{h}}G^i=0,$ the equivalence i) $%
\Leftrightarrow $ iv) from Theorem 3.1 gets the proof for i) $%
\Leftrightarrow $ iv).

i) $\Rightarrow $ v) If $(M,F)$ is a $G$ - Landsberg space then by Lemma 2.2
ii) and v) we have $g_{l\bar{r}\stackrel{B}{|}h}+g_{h\bar{r}\stackrel{B}{|}%
l}=0$ and $C_{l\bar{r}\bar{h}\stackrel{B}{|}k}=\dot{\partial}_{\bar{h}}(g_{l%
\bar{r}\stackrel{B}{|}k}).$ So that

$C_{l\bar{r}\bar{h}\stackrel{B}{|}k}+C_{k\bar{r}\bar{h}\stackrel{B}{|}l}=%
\dot{\partial}_{\bar{h}}(g_{l\bar{r}\stackrel{B}{|}k}+g_{k\bar{r}\stackrel{B%
}{|}l})=0$ and by conjugation it leads to

$C_{r\bar{l}h\stackrel{B}{|}\bar{k}}+C_{r\bar{k}h\stackrel{B}{|}\bar{l}}=0.$
Now, using Lemma 2.2 iv) and Proposition 3.2 it results $C_{j\bar{r}h%
\stackrel{B}{|}k}+C_{k\bar{r}h\stackrel{B}{|}j}=0.$

v) $\Rightarrow $ i) First, contracting with $\eta ^k$ the identity $C_{j%
\bar{r}h\stackrel{B}{|}k}+C_{k\bar{r}h\stackrel{B}{|}j}=0$ it results $C_{j%
\bar{r}h\stackrel{B}{|}0}=0,$ i.e. the space is Landsberg. On the other hand
the contraction by $\bar{\eta}^k\bar{\eta}^l$ of the identity $C_{r\bar{l}h%
\stackrel{B}{|}\bar{k}}+C_{r\bar{k}h\stackrel{B}{|}\bar{l}}=0$ gives $2C_{r%
\bar{0}h\stackrel{B}{|}\bar{0}}=0$ and by conjugation it is $2C_{0\bar{r}%
\bar{h}\stackrel{B}{|}0}=0.$ But, by Lemma 2.2 iii) we have $2(\dot{\partial}%
_{\bar{h}}G^i)g_{i\bar{r}}=C_{0\bar{r}\bar{h}\stackrel{B}{|}0}.$ From here
we obtain $\dot{\partial}_{\bar{h}}G^i=0$, which completes the proof.
\end{proof}

Now, having in mind the tensorial characterization iii) from Theorem 3.2 for
a $G$ - Landsberg space, it give rise to another class of complex Landsberg
spaces.

\begin{definition}
Let $(M,F)$ be a $n$ - dimensional complex Finsler space. $(M,F)$ is called
a strong Landsberg space if $C_{l\bar{r}h\stackrel{B}{|}0}=0$ and $C_{j\bar{r%
}h\stackrel{B}{|}\bar{0}}=0.$
\end{definition}

\begin{theorem}
Let $(M,F)$ be a $n$ - dimensional complex Finsler space. Then the following
assertions are equivalent:

i) $(M,F)$ is a strong Landsberg space;

ii) $g_{l\bar{r}\stackrel{B}{|}s}(z)$ and $\dot{\partial}_{\bar{h}}G^{i}=0;$

iii) $C_{l\bar{r}h\stackrel{B}{|}k}=0$ and $\dot{\partial}_{\bar{h}}G^{i}=0;$

iv) $C_{j\bar{r}h\stackrel{B}{|}\bar{k}}=0.$
\end{theorem}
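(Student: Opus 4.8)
The plan is to establish the cyclic chain $i)\Rightarrow ii)\Rightarrow iii)\Rightarrow iv)\Rightarrow i)$, the whole argument resting on one preliminary reduction. First I would observe that each of the four conditions forces the spray to be holomorphic in the fibre, $\dot{\partial}_{\bar{h}}G^{i}=0$: this is part of the hypothesis in $ii)$ and $iii)$; for $i)$ one conjugates $C_{j\bar{r}h\stackrel{B}{|}\bar{0}}=0$ and contracts with $\eta$ to reach $C_{0\bar{r}\bar{h}\stackrel{B}{|}0}=0$, whence $\dot{\partial}_{\bar{h}}G^{i}=0$ by Lemma 2.2 $iii)$; for $iv)$ one contracts $C_{j\bar{r}h\stackrel{B}{|}\bar{k}}=0$ with $\bar{\eta}^{k}$, conjugates, contracts once more, and again applies Lemma 2.2 $iii)$. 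Here one uses that $\eta$ and $\bar{\eta}$ are horizontally, respectively anti-horizontally, parallel for $B\Gamma$ (a consequence of $2G^{i}=\stackrel{B}{L_{jk}^{i}}\eta^{j}\eta^{k}$), so that contractions with $\eta^{k},\bar{\eta}^{k}$ commute with the $B\Gamma$ covariant derivatives. Once $\dot{\partial}_{\bar{h}}G^{i}=0$, Corollary 3.1, Proposition 3.2 and Corollary 3.2 apply: $\stackrel{B}{L_{jk}^{i}}=\stackrel{B}{L_{jk}^{i}}(z)$, hence $\stackrel{B}{L_{j\bar{k}}^{i}}=\stackrel{B}{L_{\bar{j}k}^{\bar{m}}}=0$ and $\dot{\partial}_{h}\stackrel{B}{L_{jk}^{i}}=\dot{\partial}_{\bar{h}}\stackrel{B}{L_{jk}^{i}}=0$, so that Lemma 2.2 $iv)$ and $v)$ collapse to
\[
C_{i\bar{j}h\stackrel{B}{|}k}=\dot{\partial}_{h}\bigl(g_{i\bar{j}\stackrel{B}{|}k}\bigr),\qquad C_{i\bar{j}\bar{h}\stackrel{B}{|}k}=\dot{\partial}_{\bar{h}}\bigl(g_{i\bar{j}\stackrel{B}{|}k}\bigr).
\]
These two identities drive everything that follows.

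For $i)\Rightarrow ii)$: since $(M,F)$ is Landsberg, $C_{l\bar{r}h\stackrel{B}{|}0}=0$ (Theorem 3.1 $ii)$), and contracting the first identity with $\eta^{k}$ gives $\dot{\partial}_{h}(g_{l\bar{r}\stackrel{B}{|}0})=g_{l\bar{r}\stackrel{B}{|}h}$; contracting the second identity with $\eta^{k}$ and using the conjugate of $C_{j\bar{r}h\stackrel{B}{|}\bar{0}}=0$ gives $\dot{\partial}_{\bar{h}}(g_{l\bar{r}\stackrel{B}{|}0})=0$. Thus $g_{l\bar{r}\stackrel{B}{|}0}$ is holomorphic in $\eta$, hence so is its $\eta$-derivative $g_{l\bar{r}\stackrel{B}{|}h}$; being also $0$-homogeneous in $\eta$, the argument of Proposition 3.2 forces $g_{l\bar{r}\stackrel{B}{|}h}=g_{l\bar{r}\stackrel{B}{|}h}(z)$, which is $ii)$.

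For the remaining implications the two identities are used in the same spirit. $ii)\Rightarrow iii)$: if $g_{l\bar{r}\stackrel{B}{|}k}$ is independent of $\eta,\bar{\eta}$, the first identity gives $C_{l\bar{r}h\stackrel{B}{|}k}=0$, so $iii)$ holds. $iii)\Rightarrow iv)$: the first identity now gives $\dot{\partial}_{h}(g_{l\bar{r}\stackrel{B}{|}k})=0$; since $g_{l\bar{r}\stackrel{B}{|}k}$ is $0$-homogeneous in $\eta$, the strong-maximum-principle argument of Proposition 3.2 (applied to the conjugate $g_{r\bar{l}\stackrel{B}{|}\bar{k}}$, which is then holomorphic in $\eta$) forces $g_{l\bar{r}\stackrel{B}{|}k}=g_{l\bar{r}\stackrel{B}{|}k}(z)$; hence $\dot{\partial}_{\bar{h}}(g_{l\bar{r}\stackrel{B}{|}k})=0$ and the second identity yields $C_{l\bar{r}\bar{h}\stackrel{B}{|}k}=0$, which is $iv)$ after conjugation and relabelling. $iv)\Rightarrow i)$: conjugating $iv)$ gives $C_{i\bar{j}\bar{h}\stackrel{B}{|}k}=0$, so by the second identity $g_{i\bar{j}\stackrel{B}{|}k}$ is independent of $\bar{\eta}$, whence (again by the Proposition 3.2 argument) independent of $\eta$ and $\bar{\eta}$; then the first identity gives $C_{i\bar{j}h\stackrel{B}{|}k}=0$, and contracting this with $\eta^{k}$ and contracting $iv)$ with $\bar{\eta}^{k}$ returns exactly $C_{l\bar{r}h\stackrel{B}{|}0}=0$ and $C_{j\bar{r}h\stackrel{B}{|}\bar{0}}=0$, i.e. $i)$.

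The main obstacle is not conceptual but bookkeeping: one has to keep the conjugation rules for the $C-F$ and $B\Gamma$ covariant derivatives of the Cartan tensors under control (e.g. $\overline{C_{j\bar{r}h\stackrel{B}{|}\bar{k}}}=C_{r\bar{j}\bar{h}\stackrel{B}{|}k}$) and verify that contracting with $\eta^{k}$ or $\bar{\eta}^{k}$ commutes with the covariant derivatives involved. The genuinely substantive steps --- the collapse of Lemma 2.2 $iv),v)$ once $\dot{\partial}_{\bar{h}}G^{i}=0$, and the homogeneity/compactness argument upgrading ``the $\eta$-derivative vanishes'' to ``depends on $z$ only'' --- are short, so I would present the common reduction first as a stand-alone observation and then run the four implications in turn.
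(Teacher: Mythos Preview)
Your proposal is correct and follows essentially the same route as the paper's own proof: the same cyclic chain $i)\Rightarrow ii)\Rightarrow iii)\Rightarrow iv)\Rightarrow i)$, driven by the collapse of Lemma~2.2~$iv),v)$ to $C_{i\bar{j}h\stackrel{B}{|}k}=\dot{\partial}_{h}(g_{i\bar{j}\stackrel{B}{|}k})$ and $C_{i\bar{j}\bar{h}\stackrel{B}{|}k}=\dot{\partial}_{\bar{h}}(g_{i\bar{j}\stackrel{B}{|}k})$ once $\dot{\partial}_{\bar{h}}G^{i}=0$, together with the $0$-homogeneity/maximum-principle argument of Proposition~3.2. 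The only difference is organizational: you isolate $\dot{\partial}_{\bar{h}}G^{i}=0$ as a common preliminary reduction valid under each of the four hypotheses, whereas the paper invokes it step by step (via Theorem~3.2~$iii)$ in $i)\Rightarrow ii)$, and via Lemma~2.2~$iii)$ in $iv)\Rightarrow i)$); the substance of each implication is the same.
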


\begin{proof}
i) $\,\Rightarrow $ ii). If $(M,F)$ is a strong Landsberg space, then by
Theorem 3.2 iii) it is $G$ - Landsberg. Therefore, Lemma 2.2 iv) and v)
become

$C_{i\bar{j}h\stackrel{B}{|}k}=\dot{\partial}_h(g_{i\bar{j}\stackrel{B}{|}k})
$ and $C_{l\bar{r}\bar{h}\stackrel{B}{|}k}=\dot{\partial}_{\bar{h}}(g_{l\bar{%
r}\stackrel{B}{|}k}),$ which contracted by $\eta ^k$ implies
\begin{equation}
0=\dot{\partial}_h(g_{i\bar{j}\stackrel{B}{|}k})\eta ^k\;;\;0=\dot{\partial}%
_{\bar{h}}(g_{l\bar{r}\stackrel{B}{|}k})\eta ^k.  \label{2.1}
\end{equation}

Differentiating the second equation of (\ref{2.1}) with respect to $\eta ^s,$
it yields $0=\dot{\partial}_{\bar{h}}[\dot{\partial}_s(g_{l\bar{r}\stackrel{B%
}{|}k})\eta ^k]+\dot{\partial}_{\bar{h}}(g_{l\bar{r}\stackrel{B}{|}s}).$
Now, using the first relation from (\ref{2.1}) it results $\dot{\partial}_{%
\bar{h}}(g_{l\bar{r}\stackrel{B}{|}s})=0.$ Because $g_{l\bar{r}\stackrel{B}{|%
}s}$ are holomorphic with respect to $\eta $ and homogeneous of zero degree,
these give that $g_{l\bar{r}\stackrel{B}{|}s}$ depends on $z$ only, i.e. $%
g_{l\bar{r}\stackrel{B}{|}s}(z).$

Now, the conditions $g_{l\bar{r}\stackrel{B}{|}s}(z)$ and $\dot{\partial}_{%
\bar{h}}G^i=0$ substituted into Lemma 2.2 iv), gives $C_{i\bar{j}h\stackrel{B%
}{|}k}=0.$ So that, we have proved ii) $\Rightarrow $ iii).

To prove iii) $\Rightarrow $ iv) we use again Lemma 2.2 iv). Under
assumptions iii), it is $\dot{\partial}_h(g_{i\bar{j}\stackrel{B}{|}k})=0$
and by conjugation $\dot{\partial}_{\bar{h}}(g_{j\bar{\imath}\stackrel{B}{|}%
\bar{k}})=0.$ This means that $g_{j\bar{\imath}\stackrel{B}{|}\bar{k}}$ is
holomorphic in $\eta $ which together its $0$ - homogeneity implies $g_{j%
\bar{\imath}\stackrel{B}{|}\bar{k}}(z)$ and so its conjugate $g_{i\bar{j}%
\stackrel{B}{|}k}$ depends on $z$ only. Therefore, v) from Lemma 2.1 leads
to $C_{l\bar{r}\bar{h}\stackrel{B}{|}k}=0,$ i.e. iv).

The proof is complete if we show that iv) $\Rightarrow $ i). Indeed, $C_{l%
\bar{r}\bar{h}\stackrel{B}{|}k}=0$ implies $C_{l\bar{r}\bar{h}\stackrel{B}{|}%
0}=0$ and $\dot{\partial}_{\bar{h}}G^i=0,$ by Lemma 2.1 iii).

Lemma 2.2 v) gives $\dot{\partial}_{\bar{h}}(g_{i\bar{j}\stackrel{B}{|}k})=0$
and so $g_{j\bar{\imath}\stackrel{B}{|}\bar{k}}(z).$ Thus, by Lemma 2.2 iv)
we obtain $C_{l\bar{r}h\stackrel{B}{|}k}=0$, which contracted by $\eta ^k$ yilds $%
C_{l\bar{r}h\stackrel{B}{|}0}=0.$ So that the space is strong Landsberg.
\end{proof}

\begin{remark}
By Theorem 3.2 iv) and Theorem 3.3 ii) it results that a $n$ - dimensional
complex Finsler space is strong Landsberg if and only if $\stackrel{c}{(L_{%
\bar{j}k}^{\bar{m}}}g_{i\bar{m}})(z)$ and $\dot{\partial}_{\bar{h}}G^{i}=0.$
\end{remark}

Now recall that according to Aikou, \cite{Ai}, a complex Berwald space is a
Finsler space which is K\"{a}hler and $L_{jk}^i=L_{jk}^i(z)$.

Having in the mind that any K\"{a}hler complex Finsler space is Landsberg,
we can introduce another generalization for the $G$ - Landsberg spaces. So,
by replacing the Landsberg condition from definition of the $G$ - Landsberg
space with the K\"{a}hler condition we obtain:

\begin{definition}
Let $(M,F)$ be a $n$ - dimensional complex Finsler space. $(M,F)$ is called $%
G$ - K\"{a}hler space if it is K\"{a}hler and the spray coefficients $G^{i}$
are holomorphic in $\eta .$
\end{definition}

Some necessary and sufficient conditions for $G$ - K\"{a}hler spaces are
contained in the next theorem.

\begin{theorem}
Let $(M,F)$ be a $n$ - dimensional complex Finsler space. Then the following
assertions are equivalent:

i) $(M,F)$ is $G$ - K\"{a}hler;

ii) $\stackrel{B}{L_{j\bar{k}}^{i}}=\stackrel{c}{L_{j\bar{k}}^{i}}$;

iii) $\stackrel{B}{L_{jk}^{i}}=L_{jk}^{i}(z);$

iv) $(M,F)$ is a complex Berwald space;

v) $g_{i\bar{j}\;\stackrel{B}{|}\;k}=0$ and $\dot{\partial}_{\bar{h}%
}G^{i}=0. $
\end{theorem}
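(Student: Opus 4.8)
The plan is to prove the chain of equivalences
i)~$\Leftrightarrow$~iii), iii)~$\Leftrightarrow$~iv), iii)~$\Rightarrow$~ii)~$\Rightarrow$~i), and iii)~$\Leftrightarrow$~v), using freely the results already established for $G$~-~Landsberg spaces, the K\"ahler identities $\stackrel{c}{\delta_j}=\delta_j$ and $L_{jk}^i=\stackrel{c}{L_{jk}^i}=\stackrel{B}{L_{jk}^i}$ recalled in \S2.2, and Proposition~3.2.

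First I would do i)~$\Leftrightarrow$~iii). If $(M,F)$ is $G$~-~K\"ahler, then it is K\"ahler, so $\stackrel{B}{L_{jk}^i}=L_{jk}^i$; moreover $G^i$ holomorphic in $\eta$ gives, by Proposition~3.2, that $\stackrel{B}{L_{jk}^i}$ depends only on $z$. Hence $\stackrel{B}{L_{jk}^i}=L_{jk}^i(z)$, which is iii). Conversely, assume iii). Then $\stackrel{B}{L_{jk}^i}=L_{jk}^i(z)$ depends only on $z$, so by Proposition~3.2 the $G^i$ are holomorphic in $\eta$, i.e. $\dot{\partial}_{\bar k}G^i=0$. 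It remains to see that the equality $\stackrel{B}{L_{jk}^i}=L_{jk}^i$ forces the K\"ahler condition. For this I would recall that $2G^i=L^i_{jk}\eta^j\eta^k$ always, whereas in general $2G^i=\stackrel{c}{N^i_j}\eta^j$ and $\stackrel{c}{N^i_j}=\dot\partial_j G^i$; the hypothesis $\dot\partial_k\stackrel{B}{L^i_{jh}}=0$ (Corollary~3.2) together with $\stackrel{B}{L^i_{jk}}=\dot\partial_k\stackrel{c}{N^i_j}$ and the symmetry $\stackrel{B}{L^i_{jk}}=\stackrel{B}{L^i_{kj}}$ should be combined with $\stackrel{B}{L^i_{jk}}=L^i_{jk}$ to recover $T^i_{jk}\eta^j=\stackrel{c}{N^i_k}-\stackrel{c}{N^i_j}\cdots$, i.e. the vanishing of $T_{jk}^i\eta^j$, which is the K\"ahler condition. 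This algebraic reduction — extracting K\"ahler from ``$\stackrel{B}{L}=L$ together with holomorphy'' — is the step I expect to be the main obstacle, since it requires carefully tracking the difference between $\stackrel{c}{N^i_j}$ and $N^i_j$ via the formula $\stackrel{c}{\delta_j}=\delta_j-(\stackrel{c}{N^k_j}-N^k_j)\dot\partial_k$.

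Next, iii)~$\Leftrightarrow$~iv) is essentially the definition: by Aikou's definition recalled just before Definition~3.4, a complex Berwald space is one which is K\"ahler with $L_{jk}^i=L_{jk}^i(z)$; combining the already-proven i)~$\Leftrightarrow$~iii) (so that iii) entails K\"ahler) with $\stackrel{B}{L_{jk}^i}=L_{jk}^i$ in the K\"ahler case gives iii)~$\Leftrightarrow$~iv) immediately. For iii)~$\Rightarrow$~ii): in the K\"ahler case $\stackrel{c}{L_{j\bar k}^i}=0$ (shown in \S2.2), and iii) gives $\dot\partial_{\bar k}G^i=0$, whence $\stackrel{B}{L_{j\bar k}^i}=\dot\partial_{\bar k}\stackrel{c}{N^i_j}=0$; thus both sides vanish and ii) holds. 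For ii)~$\Rightarrow$~i): from $\stackrel{B}{L_{j\bar k}^i}=\dot\partial_{\bar k}\stackrel{c}{N^i_j}$ and $\stackrel{c}{N^i_j}=\dot\partial_j G^i$ we get $\stackrel{B}{L_{j\bar k}^i}=\dot\partial_j\dot\partial_{\bar k}G^i$, and contracting $\stackrel{B}{L_{j\bar k}^i}=\stackrel{c}{L_{j\bar k}^i}$ with $\eta^j$ and using Lemma~2.2~i) together with the structure of $\stackrel{c}{L_{j\bar k}^i}$ should yield $\dot\partial_{\bar k}G^i=0$; one then still needs the K\"ahler condition, which I would obtain by a symmetry argument on $\stackrel{c}{L^i_{j\bar k}}$ and $\stackrel{c}{L^{\bar\imath}_{\bar j k}}$ analogous to the computation in \S2.2 showing $\stackrel{c}{L^{\bar\imath}_{\bar h j}}=0$ in the K\"ahler case.

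Finally, iii)~$\Leftrightarrow$~v). Assuming iii), we have (by i)~$\Leftrightarrow$~iii)) that the space is K\"ahler and $\dot\partial_{\bar h}G^i=0$; in the K\"ahler case the $C\!-\!F$ connection coincides with $B\Gamma$ and is metrical, so $g_{i\bar j\stackrel{B}{|}k}=g_{i\bar j|k}=0$ by \eqref{1.3.}, giving v). Conversely, assume v): $\dot\partial_{\bar h}G^i=0$ already gives that $B\Gamma$ is of $(1,0)$~-~type and $\stackrel{B}{L^i_{jk}}$ depends only on $z$ (Proposition~3.1, Corollary~3.1, Proposition~3.2); and $g_{i\bar j\stackrel{B}{|}k}=0$ plugged into Theorem~3.1~iv) forces $(\stackrel{c}{L^{\bar m}_{\bar j k}}-\stackrel{B}{L^{\bar m}_{\bar j k}})g_{i\bar m}=0$, i.e. the space is complex Landsberg, hence $G$~-~Landsberg, and then $g_{i\bar j\stackrel{B}{|}k}=0$ says $B\Gamma$ is $h$~-~metrical; combined with $v$~-~metricity extracted from $\stackrel{B}{L^i_{jk}}=\stackrel{B}{L^i_{jk}}(z)$ and Lemma~2.2~iv)–v), one recovers that $C\!-\!F=B\Gamma$, which characterizes the K\"ahler condition, completing v)~$\Rightarrow$~iii).
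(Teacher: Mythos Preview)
Your overall scheme is reasonable, and the easy directions (i)~$\Rightarrow$~iii), iii)~$\Rightarrow$~ii), iii)~$\Rightarrow$~v)) match the paper. However two of the harder implications are handled in a way that either does not work or is needlessly indirect compared with the paper's argument.

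\medskip
\textbf{v)~$\Rightarrow$~i).} Your appeal to Theorem~3.1~iv) is not valid as written. That statement is an \emph{equivalence} (Landsberg $\Leftrightarrow$ a certain equation), not an identity you can plug into. What is always true (from the proof of Theorem~3.1) is
\[
g_{i\bar j\stackrel{B}{|}k}=(\stackrel{c}{L^l_{ik}}-\stackrel{B}{L^l_{ik}})g_{l\bar j}+(\stackrel{c}{L^{\bar m}_{\bar j k}}-\stackrel{B}{L^{\bar m}_{\bar j k}})g_{i\bar m},
\]
so $g_{i\bar j\stackrel{B}{|}k}=0$ only says the \emph{sum} of these two terms vanishes; you cannot separate them, and even if you could, the second term being zero is condition~ii), not the Landsberg condition. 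The paper avoids this detour entirely: from $\dot\partial_{\bar h}G^i=0$ one has $\stackrel{B}{L^{\bar m}_{\bar j k}}=0$, so $g_{i\bar j\stackrel{B}{|}k}=0$ reads $\stackrel{B}{L^l_{ik}}g_{l\bar j}=\stackrel{c}{\delta_k}g_{i\bar j}$; the symmetry $\stackrel{B}{L^l_{ik}}=\stackrel{B}{L^l_{ki}}$ then forces $\stackrel{c}{\delta_k}g_{j\bar l}=\stackrel{c}{\delta_j}g_{k\bar l}$, and contracting with $\eta^k$ gives the K\"ahler condition directly.

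\medskip
\textbf{ii)~$\Rightarrow$~i).} Contracting with $\eta^j$ as you suggest produces $2\dot\partial_{\bar k}G^i$ on the Berwald side but leaves $\stackrel{c}{L^i_{j\bar k}}\eta^j$ unsimplified on the Rund side, so nothing is gained. The paper contracts with $\bar\eta^k$ instead: Lemma~2.2~i) kills $\stackrel{B}{L^i_{j\bar k}}\bar\eta^k$, and using $\bar\eta^k\stackrel{c}{\delta_{\bar k}}=\bar\eta^k\delta_{\bar k}$ together with $C_{j\bar k\bar m}\bar\eta^k=0$ the Rund side reduces to $(\delta_{\bar k}g_{j\bar l}-\delta_{\bar l}g_{j\bar k})\bar\eta^k=0$, which is exactly the K\"ahler condition. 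Once K\"ahler is established, $\stackrel{c}{L^i_{j\bar k}}=0$, hence $\stackrel{B}{L^i_{j\bar k}}=0$, and then contracting with $\eta^j$ gives $\dot\partial_{\bar k}G^i=0$.

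\medskip
\textbf{iii)~$\Rightarrow$~i).} You are right that the crux is ``$\stackrel{B}{L^i_{jk}}=L^i_{jk}$ forces K\"ahler''. The paper simply invokes the equivalence, stated in \S2.2 (and coming from \cite{Mub}), that K\"ahler is characterised by $N^i_j=\stackrel{c}{N^i_j}$, equivalently $L^i_{jk}=\stackrel{c}{L^i_{jk}}=\stackrel{B}{L^i_{jk}}$. Your sketch can be made to work: from $\stackrel{B}{L^i_{jk}}=L^i_{jk}$ and the symmetry of $\stackrel{B}{L}$ one gets $\dot\partial_j(\stackrel{c}{N^i_k}-N^i_k)=0$, and $1$-homogeneity in $\eta$ then forces $\stackrel{c}{N^i_k}=N^i_k$, i.e.\ K\"ahler.
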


\begin{proof}
i) $\Leftrightarrow $ ii). If $(M,F)$ is $G$ - K\"{a}hler then $%
\stackrel{c}{L_{j\bar{k}}^i}=0$ and $\stackrel{B}{L_{j\bar{k}}^i}=0$ which
imply $\stackrel{B}{L_{j\bar{k}}^i}=\stackrel{c}{L_{j\bar{k}}^i}.$
Conversely, if $\stackrel{B}{L_{j\bar{k}}^i}=\stackrel{c}{L_{j\bar{k}}^i}$
then $\dot{\partial}_{\bar{k}}\stackrel{c}{N_j^i}=\frac 12g^{\overline{l}i}(%
\stackrel{c}{\delta _{\bar{k}}}g_{j\overline{l}}-\stackrel{c}{\delta _{\bar{l%
}}}g_{j\overline{k}})$ which contracted by $\bar{\eta}^k$ gives $\bar{\eta}%
^k(\stackrel{c}{\delta _{\bar{k}}}g_{j\overline{l}})-(\stackrel{c}{\delta _{%
\bar{l}}}g_{j\overline{k}})\bar{\eta}^k=0.$ But, $\bar{\eta}^k\stackrel{c}{%
\delta _{\bar{k}}}=\bar{\eta}^k\delta _{\bar{k}}$ and $(\stackrel{c}{\delta
_{\bar{l}}}g_{j\overline{k}})\bar{\eta}^k=({\delta _{\bar{l}}}g_{j\overline{k%
}})\bar{\eta}^k$. So that, we obtain $({\delta _{\bar{k}}}g_{j\overline{l}}-{%
\delta _{\bar{l}}}g_{j\overline{k}})\bar{\eta}^k=0.$ It implies that $(M,F)$
is K\"{a}hler, as well as $\dot{\partial}_{\bar{k}}\stackrel{c}{N_j^i}=0.$
The contraction of $\dot{\partial}_{\bar{k}}\stackrel{c}{N_j^i}=0$ by $\eta
^j$ gives $\dot{\partial}_{\bar{k}}G^i=0,$ i.e. $G^i$ does not depend on $\bar{%
\eta}^k.$

Taking into account $(M,F)$ is K\"{a}hler if and only if $L_{jk}^i=\stackrel{%
c}{L_{jk}^i}=\stackrel{B}{L_{jk}^i}$, and using Propositions 3.2, it follows the
proof for i) $\Leftrightarrow $ iii) and i) $\Leftrightarrow $ iv).

i) $\Leftrightarrow $ v). It is obvious that if $(M,F)$ is $G$ -
K\"{a}hler then $g_{i\bar{j}\stackrel{B}{|}k}=g_{i\bar{j}|k}=0.$ Conversely,
if $g_{i\bar{j}\stackrel{B}{|}k}=0$ and $\dot{\partial}_{\bar{h}}G^i=0$, then
$\stackrel{B}{L_{jk}^i}=g^{\overline{l}i}(\stackrel{c}{\delta _k}g_{j%
\overline{l}}).$ But, $\stackrel{B}{L_{jk}^i}=\stackrel{B}{L_{kj}^i}$ which
implies $\stackrel{c}{\delta _k}g_{j\overline{l}}=\stackrel{c}{\delta _j}g_{k%
\overline{l}}$ as well as $(\stackrel{c}{\delta _k}g_{j\overline{l}})\eta
^k=(\stackrel{c}{\delta _j}g_{k\overline{l}})\eta ^k$ i.e. K\"{a}hler.
\end{proof}

An immediately consequence of above Theorem follows.

\begin{proposition}
$(M,F)$ is a complex Berwald space if and only if the connections $B\Gamma $
and $R\Gamma $ are of $(1,0)$ - type.
\end{proposition}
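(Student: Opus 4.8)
The plan is to prove both implications by translating the statement "$B\Gamma$ and $R\Gamma$ are of $(1,0)$-type" into the vanishing of the barred coefficients $\stackrel{B}{L_{j\bar k}^i}$ and $\stackrel{c}{L_{j\bar k}^i}$, and then to feed these two conditions into Theorem 3.4, whose equivalences (i)$\Leftrightarrow$(ii)$\Leftrightarrow$(iv) already do most of the work. Recall that a complex linear connection of the form $(\stackrel{c}{N_j^i},\,L_{jk}^i,\,L_{j\bar k}^i,\,0,0)$ is of $(1,0)$-type precisely when its mixed coefficients $L_{j\bar k}^i$ vanish, since that is exactly the condition that $D$ maps $(1,0)$-vectors to $(1,0)$-vectors compatibly with $J$. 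So "$B\Gamma$ and $R\Gamma$ are of $(1,0)$-type" is literally the conjunction $\stackrel{B}{L_{j\bar k}^i}=0$ and $\stackrel{c}{L_{j\bar k}^i}=0$.

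For the forward direction, suppose $(M,F)$ is a complex Berwald space. By Theorem 3.4, (iv)$\Rightarrow$(i)$\Rightarrow$(ii) gives both that $(M,F)$ is $G$-Kähler and that $\stackrel{B}{L_{j\bar k}^i}=\stackrel{c}{L_{j\bar k}^i}$; moreover from the proof of (i)$\Leftrightarrow$(ii) in Theorem 3.4 we have $\stackrel{c}{L_{j\bar k}^i}=0$ (the Kähler condition kills the Rund mixed coefficients, as already noted in \S2) and $\stackrel{B}{L_{j\bar k}^i}=\dot\partial_{\bar k}\stackrel{c}{N_j^i}=0$ (since $G^i$, hence $\stackrel{c}{N_j^i}$, is holomorphic in $\eta$). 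Thus both connections have vanishing mixed coefficients, i.e. both are of $(1,0)$-type.

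Conversely, assume $B\Gamma$ and $R\Gamma$ are of $(1,0)$-type, i.e. $\stackrel{B}{L_{j\bar k}^i}=0$ and $\stackrel{c}{L_{j\bar k}^i}=0$. Then in particular $\stackrel{B}{L_{j\bar k}^i}=\stackrel{c}{L_{j\bar k}^i}$, which is condition (ii) of Theorem 3.4; hence $(M,F)$ is $G$-Kähler, and by the equivalence (i)$\Leftrightarrow$(iv) of that theorem it is a complex Berwald space. (Equivalently, one can argue directly: $\stackrel{B}{L_{j\bar k}^i}=0$ forces $\dot\partial_{\bar k}\stackrel{c}{N_j^i}=0$, hence $\dot\partial_{\bar k}G^i=0$ after contraction with $\eta^j$, so $G^i$ is holomorphic in $\eta$; and $\stackrel{c}{L_{j\bar k}^i}=0$ together with the formula for $\stackrel{c}{L_{j\bar k}^i}$ yields, after contraction with $\bar\eta^k$, the Kähler identity $(\delta_{\bar k}g_{j\bar l}-\delta_{\bar l}g_{j\bar k})\bar\eta^k=0$ — exactly as in the proof of (i)$\Leftrightarrow$(ii) of Theorem 3.4 — so $(M,F)$ is Kähler with holomorphic spray, i.e. $G$-Kähler, i.e. complex Berwald by Theorem 3.4(iv).)

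There is no real obstacle here; the only point requiring a little care is the first sentence — making explicit that "$(1,0)$-type" for these Berwald/Rund-type connections is equivalent to the vanishing of the barred coefficients $L_{j\bar k}^i$ — after which the proposition is an immediate repackaging of the equivalences already established in Theorem 3.4.
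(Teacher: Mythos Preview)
Your proof is correct and is precisely the argument the paper has in mind: the proposition is stated there as ``an immediately consequence of above Theorem'' (Theorem 3.4) with no further proof, and you have simply made explicit that ``$(1,0)$-type'' for $B\Gamma$ and $R\Gamma$ amounts to $\stackrel{B}{L_{j\bar k}^i}=0$ and $\stackrel{c}{L_{j\bar k}^i}=0$, after which the equivalence (ii)$\Leftrightarrow$(iv) of Theorem 3.4 does the rest.
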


\begin{lemma}
For any complex Finsler space $(M,F)$, $C_{l\bar{r}h|k}=0$ if and only if $%
C_{l\bar{r}h|\bar{k}}=0.$
\end{lemma}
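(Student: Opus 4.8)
The plan is to relate the two Cartan-tensor conditions via the well-known Chern--Finsler identity expressing $C_{l\bar r h}$ in terms of the connection coefficients $L^i_{lk}$, and then to play off the $h$- and $\bar h$-covariant derivatives against each other using the $v$-metricity $g_{i\bar j|k}=g_{i\bar j}|_{\bar k}=0$ already recorded in \eqref{1.3.}. Concretely, from Lemma 2.1 i) we have $C_{l\bar r h|k}=(\dot\partial_h L^i_{lk})g_{i\bar r}$. Since $g_{i\bar r}$ is nondegenerate, the condition $C_{l\bar r h|k}=0$ is equivalent to $\dot\partial_h L^i_{lk}=0$, i.e. the Chern--Finsler coefficients $L^i_{lk}$ do not depend on $\eta$. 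So the whole statement reduces to: $\dot\partial_h L^i_{lk}=0$ for all indices if and only if $C_{l\bar r h|\bar k}=0$ for all indices. The forward implication is the easy half; the converse is the step I expect to be the main obstacle, since $C_{l\bar r h|\bar k}$ does not on its face isolate a single partial derivative of an $L$.

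For the forward direction I would argue as follows. Assume $\dot\partial_h L^i_{lk}=0$. Conjugating, $\dot\partial_{\bar h}L^{\bar\imath}_{\bar l\bar k}=0$; equivalently the conjugate coefficients are holomorphic in $\eta$, and being $0$-homogeneous they depend on $z$ only, hence $\dot\partial_k L^{\bar\imath}_{\bar l\bar k}=0$ as well — this is exactly the maximum-principle argument used in the proof of Proposition 3.2 and Corollary 3.2, applied now to $L^{\bar\imath}_{\bar l\bar k}$ rather than to $\stackrel{B}{L^i_{jk}}$. Conjugating back, $\dot\partial_{\bar k}L^i_{lh}=0$ too. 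Now I would write out $C_{l\bar r h|\bar k}$ from its definition: differentiating $g_{i\bar j|k}=0$ (or rather, using the $v$-covariant and $\bar h$-covariant derivative rules for $C-F$) one gets an expression of the schematic form $C_{l\bar r h|\bar k}=\delta_{\bar k}C_{l\bar r h}-(\text{terms } L\cdot C)$, and since $C_{l\bar r h}=\dot\partial_h g_{l\bar r}$ while $\delta_{\bar k}g_{l\bar r}$ is controlled by $L^{\bar m}_{\bar r\bar k}g_{l\bar m}$ through $h$-metricity, the vanishing of all $\dot\partial$-derivatives of the $L$'s forces $C_{l\bar r h|\bar k}=0$. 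Cleaner still: with $L^i_{lk}=L^i_{lk}(z)$ the $(c.n.c.)$ coefficients $N^i_k=L^i_{lk}\eta^l$ are linear in $\eta$, so $\delta_{\bar k}$ has no $\dot\partial$-correction acting on $z$-functions, and the structure collapses.

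For the converse, assume $C_{l\bar r h|\bar k}=0$. By Lemma 2.1 ii), $0=(\dot\partial_{\bar h}L^i_{lk})g_{i\bar r}+(\dot\partial_{\bar h}N^i_k)C_{i\bar r l}$. Contracting with $\bar\eta^k$ and using $\dot\partial_{\bar h}(N^i_k\bar\eta^k)$ together with the $v$-homogeneity relations $\dot\partial_{\bar k}g_{i\bar r}\bar\eta^k=0$ (equivalently $C_{i\bar r\bar k}\bar\eta^k=0$), I expect the second term to drop out and obtain $(\dot\partial_{\bar h}L^i_{l0})g_{i\bar r}=0$, hence $\dot\partial_{\bar h}L^i_{lk}\eta^k=\text{(something)}$ — the homogeneity bookkeeping here is the delicate point. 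A cleaner route: conjugate the hypothesis to $C_{\bar l r\bar h|k}=0$; this should feed into Lemma 2.1 i) applied to the conjugate structure and give $\dot\partial_{\bar h}L^{\bar\imath}_{\bar l\bar k}=0$, i.e. $L^{\bar\imath}_{\bar l\bar k}$ holomorphic in $\eta$, whence $0$-homogeneity and the maximum principle give $L^{\bar\imath}_{\bar l\bar k}(z)$ and then $\dot\partial_k L^{\bar\imath}_{\bar l\bar k}=0$; conjugating, $\dot\partial_{\bar k}L^i_{lh}=0$, and then conjugating once more and re-running the maximum principle delivers $\dot\partial_h L^i_{lk}=0$, which by Lemma 2.1 i) is $C_{l\bar r h|k}=0$. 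The main obstacle, as noted, is extracting from $C_{l\bar r h|\bar k}=0$ a clean vanishing statement about a derivative of a connection coefficient, because Lemma 2.1 ii) carries the extra term $(\dot\partial_{\bar h}N^i_k)C_{i\bar r l}$; handling that term — showing it vanishes or can be absorbed, presumably by a contraction that exploits $\dot\partial_{\bar h}N^i_k=\dot\partial_{\bar h}(g^{\bar m i}\partial_l g_{s\bar m}\eta^s\cdot\text{stuff})$ being itself built from a $\bar\partial$ of a metric and hence tied to another Cartan tensor — is where the real work lies.
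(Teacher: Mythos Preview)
Your overall strategy matches the paper's: use Lemma 2.1 i) to translate $C_{l\bar r h|k}=0$ into $\dot\partial_h L^i_{lk}=0$, invoke conjugation and the $0$-homogeneity/maximum-principle argument to upgrade this to $L^i_{lk}=L^i_{lk}(z)$, and then feed into Lemma 2.1 ii) to obtain the other vanishing. The forward direction is fine as you outline it.

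The gap is in the converse, precisely at the ``extra term'' you flag. Your proposed contraction with $\bar\eta^k$ is the wrong one and does not isolate what you need. The paper's move is to contract with $\eta^l$. First note that the conjugate of $C_{l\bar r h|\bar k}=0$ is, after relabeling, $C_{l\bar r\bar h|k}=0$, which is exactly the left-hand side of Lemma 2.1 ii); hence
\[
(\dot\partial_{\bar h}L^i_{lk})g_{i\bar r}+(\dot\partial_{\bar h}N^i_k)C_{i\bar r l}=0.
\]
Contracting with $\eta^l$ kills the second term outright because $C_{i\bar r l}\eta^l=0$, while the first term becomes $\dot\partial_{\bar h}(L^i_{lk}\eta^l)\,g_{i\bar r}=(\dot\partial_{\bar h}N^i_k)g_{i\bar r}$, using $L^i_{lk}\eta^l=N^i_k$ (Euler on the $1$-homogeneous $N^i_k$). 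Thus $\dot\partial_{\bar h}N^i_k=0$. Substituting this back into the displayed identity yields $(\dot\partial_{\bar h}L^i_{lk})g_{i\bar r}=0$, hence $\dot\partial_{\bar h}L^i_{lk}=0$; the $0$-homogeneity argument then gives $L^i_{lk}(z)$, and Lemma 2.1 i) finishes.

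So your plan is right in outline; the only missing ingredient is this single contraction with $\eta^l$, which simultaneously isolates $\dot\partial_{\bar h}N^i_k$ and annihilates the Cartan factor you were worried about.
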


\begin{proof}
If $C_{l\bar{r}h|k}=0$, then Lemma 2.1 i) induces $\dot{\partial}%
_hL_{lk}^i=0$ and its conjugated $\dot{\partial}_{\bar{h}}L_{\bar{l}\bar{k}%
}^{\bar{\imath}}=0.$ This means that $L_{\bar{l}\bar{k}}^{\bar{\imath}}$ are
holomorphic in $\eta $, which together with their $0$ - homogeneity gives $%
L_{jk}^i(z).$ Thus, by Lemma 2.1 ii) it results $C_{l\bar{r}h|\bar{k}}=0.$
Conversely, if $C_{l\bar{r}h|\bar{k}}=0$ then ii) from Lemma 2.1 become $(%
\dot{\partial}_{\bar{h}}L_{lk}^i)g_{i\bar{r}}+(\dot{\partial}_{\bar{h}%
}N_k^i)C_{i\bar{r}l}=0.$ It contracted by $\eta ^l$ gives $\dot{\partial}_{%
\bar{h}}N_k^i=0$ and so, $\dot{\partial}_{\bar{h}}L_{lk}^i=0$ which implies $%
L_{jk}^i(z).$ Now, using i) from Lemma 2.1, we obtain $C_{l\bar{r}%
h|k}=0.$
\end{proof}

We note that $C_{l\bar{r}h|\bar{k}}=0$ or $C_{l\bar{r}h|k}=0$ implies $\dot{%
\partial}_{\bar{h}}G^i=0$, but the converse is not true. $\dot{\partial}_{%
\bar{h}}G^i=0$ together with the K\"{a}hler condition gives $C_{l\bar{r}h|%
\bar{k}}=0$ or $C_{l\bar{r}h|k}=0.$ Therefore, some tensorial
characterizations for complex Berwald spaces are contained in the next
theorem.

\begin{theorem}
$(M,F)$ is a complex Berwald space if and only if it is K\"{a}hler and
either $C_{l\bar{r}h|\bar{k}}=0$ or $C_{l\bar{r}h|k}=0$.
\end{theorem}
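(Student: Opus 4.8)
The plan is to prove the two implications separately, leveraging Theorem 3.4 (the characterization of complex Berwald spaces as $G$-Kähler, equivalently Kähler with $\stackrel{B}{L_{jk}^i}=L_{jk}^i(z)$) together with Lemma 3.1, which already establishes the equivalence $C_{l\bar r h|k}=0 \Leftrightarrow C_{l\bar r h|\bar k}=0$. Because of Lemma 3.1, it suffices to treat one of the two vanishing conditions, say $C_{l\bar r h|k}=0$, and the other follows automatically; so the real content is the equivalence between ``complex Berwald'' and ``Kähler together with $C_{l\bar r h|k}=0$.''

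For the forward direction, suppose $(M,F)$ is complex Berwald. By definition it is Kähler, so there is nothing to prove about the Kähler condition. For the vanishing of the $C-F$ covariant derivative, I would use Lemma 2.1 i), which reads $C_{l\bar r h|k}=(\dot\partial_h L_{lk}^i)g_{i\bar r}$. Since $(M,F)$ is complex Berwald, $L_{jk}^i=L_{jk}^i(z)$ depends on position only, hence $\dot\partial_h L_{lk}^i=0$, and therefore $C_{l\bar r h|k}=0$. (Equivalently, one could invoke $C_{l\bar r h|\bar k}=0$ via Lemma 2.1 ii): with $L_{jk}^i(z)$ one has $\dot\partial_{\bar h}L_{lk}^i=0$, and $\dot\partial_{\bar h}N_k^i=0$ because the Kähler condition forces the canonical and $C-F$ nonlinear connections to coincide and $G^i$ to be holomorphic in $\eta$, so the second term in Lemma 2.1 ii) vanishes as well.) This direction is routine.

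For the converse, assume $(M,F)$ is Kähler and, without loss of generality (by Lemma 3.1), that $C_{l\bar r h|k}=0$. I want to conclude $L_{jk}^i=L_{jk}^i(z)$, which together with the Kähler hypothesis is exactly Aikou's definition of complex Berwald. From Lemma 2.1 i), $C_{l\bar r h|k}=(\dot\partial_h L_{lk}^i)g_{i\bar r}=0$; since $(g_{i\bar r})$ is nondegenerate (positive definite, condition iv)), this gives $\dot\partial_h L_{lk}^i=0$, i.e. $L_{lk}^i$ is holomorphic in $\eta$. The functions $L_{lk}^i$ are homogeneous of degree zero in $\eta$, so a holomorphic, $0$-homogeneous function on each punctured fibre $T_z'M\setminus\{0\}$ must be independent of $\eta$ — this is precisely the strong-maximum-principle / Liouville-type argument already used in the proof of Proposition 3.2 (and in \cite{C-S2}). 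Hence $L_{jk}^i=L_{jk}^i(z)$ fibrewise, and the transformation rule for $L_{jk}^i$ (an affine change involving only $\partial z'/\partial z$ and second derivatives of the coordinate change, exactly as recorded in the proof of Proposition 3.2) shows this is coordinate-independent, so globally $L_{jk}^i=L_{jk}^i(z)$. Combined with the Kähler assumption, $(M,F)$ is complex Berwald.

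The only mild subtlety — and the step I would be most careful about — is the Liouville/maximum-principle argument forcing a holomorphic $0$-homogeneous fibre function to be constant along the fibre; but since the identical reasoning is carried out in full in Proposition 3.2 for $\stackrel{B}{L_{jh}^i}$, I would simply cite that proof verbatim, applied now to $L_{jk}^i$ in place of $\stackrel{B}{L_{jk}^i}$. Everything else is a direct application of Lemma 2.1 and Lemma 3.1, so the proof is short.
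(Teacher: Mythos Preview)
Your overall strategy is correct and coincides with what the paper does (the paper leaves Theorem 3.5 as an immediate consequence of Lemma 2.1, Lemma 3.1 and Theorem 3.4, with only the paragraph preceding the statement serving as proof). There is, however, one genuine slip in your converse argument. From $\dot\partial_h L_{lk}^i=0$ you write ``i.e.\ $L_{lk}^i$ is holomorphic in $\eta$,'' but $\dot\partial_h=\partial/\partial\eta^h$ is the \emph{holomorphic} vertical derivative; its vanishing says $L_{lk}^i$ does not depend on the $\eta$-variables, not that it is independent of $\bar\eta$. So $L_{lk}^i$ is not holomorphic in $\eta$, and the maximum-principle argument from Proposition 3.2 cannot be applied to it directly.

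The fix is the conjugation trick the paper uses in the proofs of both Lemma 3.1 and Corollary 3.2: from $\dot\partial_h L_{lk}^i=0$ one obtains, by conjugation, $\dot\partial_{\bar h} L_{\bar l\bar k}^{\bar\imath}=0$, so the conjugated coefficients $L_{\bar l\bar k}^{\bar\imath}$ \emph{are} holomorphic in $\eta$ and $0$-homogeneous; the Liouville/strong maximum principle then forces $L_{\bar l\bar k}^{\bar\imath}=L_{\bar l\bar k}^{\bar\imath}(z)$, and conjugating back gives $L_{lk}^i=L_{lk}^i(z)$. In fact you need not redo this at all: the proof of Lemma 3.1 already establishes that either of $C_{l\bar r h|k}=0$, $C_{l\bar r h|\bar k}=0$ forces $L_{jk}^i=L_{jk}^i(z)$, so you can simply cite that. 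With this correction your proof is complete and matches the paper's.
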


In the remainder of this section we return to the notion of the real Berwald
space, \cite{Be}. It is a real Finsler space for which the coefficients of
the (real) Berwald connection depend only on the position. Our problem is to
see whether there exist a corespondent of this real assertion in complex
Finsler spaces. Taking into account Theorem 3.4 we have $\stackrel{B}{%
L_{jk}^i}(z)$, for any complex Berwald space. Nevertheless the converse is
not true, will see below there are complex Finsler spaces with $\stackrel{B}{%
L_{jk}^i}$ depending only on $z$ which are not Berwald. Therefore, it comes
into view another class of complex Finsler spaces.

\begin{definition}
Let $(M,F)$ be a $n$ - dimensional complex Finsler space. $(M,F)$ is called
generalized Berwald if the Berwald connection coefficients $\stackrel{B}{%
L_{jk}^{i}}$ depend only on the position $z.$
\end{definition}

Using Corollary 3.1 and Proposition 3.2, we have proved the following,

\begin{theorem}
Let $(M,F)$ be a $n$ - dimensional complex Finsler space. Then the following
assertions are equivalent:

i) $(M,F)$ is generalized Berwald;

ii) $G^{i}$ are holomorphic in $\eta ;$

iii) $B\Gamma $ is of $(1,0)$ - type.
\end{theorem}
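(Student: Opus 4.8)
The strategy is simply to chain together the characterizations already established in Corollary 3.1 and Proposition 3.2, so the proof is essentially a three-way equivalence assembled from two two-way ones. The cleanest order is i) $\Leftrightarrow$ ii) $\Leftrightarrow$ iii), which covers everything by transitivity.

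\begin{proof}
i) $\Leftrightarrow$ ii). By definition, $(M,F)$ is generalized Berwald if and only if the coefficients $\stackrel{B}{L_{jk}^{i}}$ depend only on $z$. Proposition 3.2 asserts precisely that $\stackrel{B}{L_{jk}^{i}}$ depend only on $z$ if and only if the spray coefficients $G^{i}$ are holomorphic in $\eta$. This gives the equivalence i) $\Leftrightarrow$ ii).

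ii) $\Leftrightarrow$ iii). Corollary 3.1 states that $G^{i}$ are holomorphic in $\eta$ if and only if the connection $B\Gamma$ is of $(1,0)$ - type. Hence ii) $\Leftrightarrow$ iii), and combining with the previous equivalence yields i) $\Leftrightarrow$ iii) as well.
\end{proof}

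The only thing one might double-check is that the three conditions are genuinely being read in the same sense as in the cited results, but since Definition 3.5, Proposition 3.2 and Corollary 3.1 all speak of the same objects $\stackrel{B}{L_{jk}^{i}}$, $G^{i}$ and $B\Gamma$, no reconciliation is needed. There is no real obstacle here: the content of the theorem was entirely front-loaded into Corollary 3.1 and Proposition 3.2 (whose own proof, via the strong maximum principle on the $0$-homogeneous functions $\stackrel{B}{L_{jh}^{i}}$, is where the actual work happened), and the present statement is just the bookkeeping that packages those facts into a single list of equivalent conditions. Accordingly I would keep the proof to the two or three lines above rather than reprove anything.
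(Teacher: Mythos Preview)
Your proof is correct and matches the paper's own approach exactly: the authors simply write ``Using Corollary 3.1 and Proposition 3.2, we have proved the following,'' treating the theorem as an immediate corollary of those two results. Your write-up just makes the two chained equivalences explicit, which is fine.
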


\begin{corollary}
If $(M,F)$ is a complex Finsler space with the $C-F$ coefficients $%
L_{jk}^{i} $ depending only on $z,$ then the space is generalized Berwald.
\end{corollary}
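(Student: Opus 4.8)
The plan is to deduce the statement directly from Theorem 3.6, which asserts that $(M,F)$ is generalized Berwald if and only if the spray coefficients $G^{i}$ are holomorphic in $\eta$ (equivalently $\dot{\partial}_{\bar{k}}G^{i}=0$). Hence it suffices to show that the hypothesis $L_{jk}^{i}=L_{jk}^{i}(z)$ forces $G^{i}$ to be holomorphic in $\eta$.

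First I would recall that the Chern--Finsler nonlinear connection coefficients are $N_{j}^{i}=L_{lj}^{i}\eta^{l}$ (the second identity in (\ref{1.3})). If the $L_{lj}^{i}$ depend only on $z$, this expression is linear, hence holomorphic, in $\eta$. Consequently the spray coefficients $G^{i}=\frac{1}{2}N_{j}^{i}\eta^{j}=\frac{1}{2}L_{lj}^{i}(z)\,\eta^{l}\eta^{j}$ are holomorphic in $\eta$ --- in fact homogeneous quadratic polynomials in $\eta$ --- so that $\dot{\partial}_{\bar{k}}G^{i}=0$. By the equivalence (i) $\Leftrightarrow$ (ii) of Theorem 3.6, $(M,F)$ is generalized Berwald. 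If one prefers to exhibit the Berwald coefficients explicitly, differentiating $G^{i}$ gives $\stackrel{c}{N_{j}^{i}}=\dot{\partial}_{j}G^{i}=\frac{1}{2}\left( L_{jk}^{i}(z)+L_{kj}^{i}(z)\right) \eta^{k}$, and one more differentiation yields $\stackrel{B}{L_{jh}^{i}}=\dot{\partial}_{h}\stackrel{c}{N_{j}^{i}}=\frac{1}{2}\left( L_{jh}^{i}(z)+L_{hj}^{i}(z)\right)$, which visibly depends only on $z$, recovering Definition 3.4 directly.

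There is essentially no obstacle in this argument: the only point to be attentive to is that $\stackrel{B}{L_{jk}^{i}}$ is built from the canonical nonlinear connection $\stackrel{c}{N_{j}^{i}}=\dot{\partial}_{j}G^{i}$ associated to the spray $G^{i}$, and not from the $L_{jk}^{i}$ directly, so one must trace how the $z$-dependence of $L_{jk}^{i}$ propagates through $N_{j}^{i}$ and $G^{i}$. It is also worth recalling, as remarked just before the Corollary, that the converse fails --- there exist complex Finsler spaces with $\stackrel{B}{L_{jk}^{i}}(z)$ for which $L_{jk}^{i}$ still depends on $\eta$ --- so this is a sufficient-only condition.
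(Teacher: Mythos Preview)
Your argument is correct and follows the same route as the paper: both use $2G^{i}=L_{lj}^{i}\eta^{l}\eta^{j}$ together with $L_{jk}^{i}=L_{jk}^{i}(z)$ to obtain $\dot{\partial}_{\bar{h}}G^{i}=0$, and then invoke Theorem 3.6. The paper's proof is the one-line version of your first paragraph; your explicit computation of $\stackrel{B}{L_{jh}^{i}}=\tfrac{1}{2}(L_{jh}^{i}+L_{hj}^{i})$ is a pleasant extra, though note that the definition you want to cite at the end is Definition 3.5 (generalized Berwald), not Definition 3.4.
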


\begin{proof}
We have $\dot{\partial}_{\bar{h}}L_{jk}^i=0$, which contracted by $%
\eta ^j\eta ^k$ gives $\dot{\partial}_{\bar{h}}G^i=0.$
\end{proof}

An example of generalized Berwald space is given by the complex version of
\textit{Antonelli - Shimada }metric
\begin{equation}
F_{AS}^2=L_{AS}(z,w;\eta ,\theta ):=e^{2\sigma }\left( |\eta |^4+|\theta
|^4\right) ^{\frac 12},\;\mbox{with}\;\;\eta ,\theta \neq 0,\;\;
\label{IV.9}
\end{equation}
on a domain $D$ from $\widetilde{T^{\prime }M},$ $\dim _CM=2,$ such that its
metric tensor is nondegenerated. We relabeled the local coordinates $z^1,$ $%
z^2,$ $\eta ^1,$ $\eta ^2$ as $z,$ $w,\eta ,$ $\theta ,$ respectively. $%
\sigma (z,w)$ is a real valued function and $|\eta ^i|^2:=\eta ^i\overline{%
\eta }^i,$ $\eta ^i\in \{\eta ,\theta \}$, (\cite{Mub}). A direct
computation leads to
\[
L_{zz}^z=L_{wz}^w=2\frac{\partial \sigma }{\partial z};\;L_{zw}^z=L_{ww}^w=2%
\frac{\partial \sigma }{\partial w},
\]
which depend only on $z$ and $w.$

\medskip Summing up all the results proved above we have the inclusions from
Fig. 1.

The intersection between the set of Landsberg spaces and those of the
generalized Berwald spaces gives the class of $G$ - Landsberg spaces.

Trivial examples of such spaces are given by the purely Hermitian and local
Minkowski manifolds. In the next section we came with some nice families of
generalized Berwald spaces.

\section{Generalized Berwald spaces with $(\alpha ,\beta )$ - metrics}

We consider $z\in M,$ $\eta \in T_z^{\prime }M,$ $\eta =\eta ^i\frac
\partial {\partial z^i}$, $\tilde{a}:=a_{i\bar{j}}(z)dz^i\otimes d\bar{z}^j$
a purely Hermitian metric and $b=b_i(z)dz^i$ a differential ($1,0)-$ form.
By these objects we have defined (for more details see \cite{Al-Mu, Al-Mu2})
the complex $(\alpha ,\beta )-$ metric $F$ on $T^{\prime }M$%
\begin{equation}
F(z,\eta ):=F(\alpha (z,\eta ),|\beta (z,\eta )|),  \label{3.1}
\end{equation}
where
\begin{equation}
\alpha (z,\eta ):=\sqrt{a_{i\bar{j}}(z)\eta ^i\bar{\eta}^j}\mbox{
and   }\beta (z,\eta )=b_i(z)\eta ^i.  \nonumber
\end{equation}

Let us recall that the coefficients of the $C-F$ connection corresponding to
the purely Hermitian metric $\alpha $ are
\[
\stackrel{a}{N_j^k}:=a^{\bar{m}k}\frac{\partial a_{l\bar{m}}}{\partial z^j}%
\eta ^l\;;\;\stackrel{a}{L_{jk}^i}:=a^{\overline{l}i}(\stackrel{a}{\delta }%
_ka_{j\overline{l}})\;;\;\stackrel{a}{C_{jk}^i}=0
\]
and we consider the settings
\begin{equation}
b^i:=a^{\bar{j}i}b_{\bar{j}}\;\;;\;\;||b||^2:=a^{\bar{j}i}b_ib_{\bar{j}%
}\;\;;\;\;b^{\bar{\imath}}:=\bar{b}^i.  \nonumber
\end{equation}

\begin{lemma}
\cite{Al-Mu} Let $(M,F)$ be a complex Finsler space with $(\alpha ,\beta )-$
metrics which satisfy $\displaystyle \frac{\partial |\beta |^{2}}{\partial
z^{i}}=||b||^{2}\frac{\partial \alpha ^{2}}{\partial z^{i}}.$ The following
statements are equivalent:

i) $\displaystyle ||b||^{2}b^{r}\frac{\partial a_{r\bar{m}}}{\partial z^{l}}%
\bar{\eta}^{m}=\bar{\beta}\bar{b}^{m}b^{r}\frac{\partial a_{r\bar{m}}}{%
\partial z^{l}};$

ii) $\displaystyle ||b||^{2}\frac{\partial b_{\bar{m}}}{\partial z^{i}}\bar{%
\eta}^{m}=\bar{\beta}\frac{\partial b_{\bar{m}}}{\partial z^{i}}\bar{b}^{m};$

iii) $\displaystyle b_{\bar{s}}\frac{\partial b_{\bar{m}}}{\partial z^{i}}%
\bar{\eta}^{m}=\bar{\beta}\frac{\partial b_{\bar{s}}}{\partial z^{i}};$

iv) $\displaystyle \bar{\beta}\left( \frac{\partial b_{i}}{\partial z^{l}}%
\eta ^{i}\eta ^{l}-2b_{l}\stackrel{a}{G^{l}}\right) +\beta \frac{\partial b_{%
\bar{m}}}{\partial z^{l}}\bar{\eta}^{m}\eta ^{l}=0$, where $\stackrel{a}{%
G^{l}}:=\frac{1}{2}\stackrel{a}{N_{j}^{l}}\eta ^{j}.$
\end{lemma}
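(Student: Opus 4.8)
The plan is to first reduce the standing hypothesis to a purely algebraic identity in the base functions, and then check the four conditions against it, using (ii) as a hub. Since $|\beta|^{2}=b_{i}b_{\bar{m}}\eta^{i}\bar{\eta}^{m}$ and $\alpha^{2}=a_{i\bar{m}}\eta^{i}\bar{\eta}^{m}$, the assumption $\frac{\partial|\beta|^{2}}{\partial z^{l}}=||b||^{2}\frac{\partial\alpha^{2}}{\partial z^{l}}$ is the vanishing, on the domain, of $\left(\frac{\partial b_{i}}{\partial z^{l}}b_{\bar{m}}+b_{i}\frac{\partial b_{\bar{m}}}{\partial z^{l}}-||b||^{2}\frac{\partial a_{i\bar{m}}}{\partial z^{l}}\right)\eta^{i}\bar{\eta}^{m}$; the bracket depends on $z$ only, so by linear independence of the monomials $\eta^{i}\bar{\eta}^{m}$ the hypothesis is equivalent to
\[
\frac{\partial b_{i}}{\partial z^{l}}b_{\bar{m}}+b_{i}\frac{\partial b_{\bar{m}}}{\partial z^{l}}=||b||^{2}\frac{\partial a_{i\bar{m}}}{\partial z^{l}}\qquad(\ast)
\]
for all indices, i.e. $\frac{\partial}{\partial z^{l}}(b_{i}b_{\bar{m}})=||b||^{2}\frac{\partial a_{i\bar{m}}}{\partial z^{l}}$. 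Throughout I would use only the elementary contractions $b_{i}\eta^{i}=\beta$, $b_{\bar{m}}\bar{\eta}^{m}=\bar{\beta}$, $b^{r}b_{r}=b^{\bar{m}}b_{\bar{m}}=||b||^{2}$, $\bar{b}^{m}=b^{\bar{m}}=a^{\bar{m}j}b_{j}$ and $b_{l}a^{\bar{m}l}=b^{\bar{m}}$, all immediate from the definitions in \S4, together with $||b||^{2}>0$ (as $b\neq0$ and $a$ is positive definite).

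The equivalence (ii) $\Leftrightarrow$ (iii) uses no hypothesis: reading each side as a polynomial in $\bar{\eta}$ with $z$-dependent coefficients, (iii) is equivalent to $b_{\bar{s}}\frac{\partial b_{\bar{m}}}{\partial z^{i}}=b_{\bar{m}}\frac{\partial b_{\bar{s}}}{\partial z^{i}}$ and (ii) to $||b||^{2}\frac{\partial b_{\bar{m}}}{\partial z^{i}}=b_{\bar{m}}\,b^{\bar{p}}\frac{\partial b_{\bar{p}}}{\partial z^{i}}$, and since $b$ is nowhere vanishing both say exactly that $\frac{\partial b_{\bar{m}}}{\partial z^{i}}$ is pointwise proportional to $b_{\bar{m}}$ (contract the first with $b^{\bar{s}}$, respectively divide the second by $||b||^{2}$, for one direction; insert the proportionality for the other). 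For (i) $\Leftrightarrow$ (ii) I would substitute $(\ast)$ into both members of (i): its left side becomes $\bar{\beta}\,b^{r}\frac{\partial b_{r}}{\partial z^{l}}+||b||^{2}\frac{\partial b_{\bar{m}}}{\partial z^{l}}\bar{\eta}^{m}$ and its right side $\bar{\beta}\,b^{r}\frac{\partial b_{r}}{\partial z^{l}}+\bar{\beta}\,b^{\bar{m}}\frac{\partial b_{\bar{m}}}{\partial z^{l}}$ (using $b^{r}b_{r}=b^{\bar{m}}b_{\bar{m}}=||b||^{2}$ and $\bar{b}^{m}=b^{\bar{m}}$); the common term cancels and what remains is precisely (ii).

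For (ii) $\Leftrightarrow$ (iv) the first step is to rewrite the spray term. From $2\stackrel{a}{G^{l}}=\stackrel{a}{N_{j}^{l}}\eta^{j}=a^{\bar{m}l}\frac{\partial a_{k\bar{m}}}{\partial z^{j}}\eta^{k}\eta^{j}$ and $b_{l}a^{\bar{m}l}=b^{\bar{m}}$ one has $2b_{l}\stackrel{a}{G^{l}}=b^{\bar{m}}\frac{\partial a_{k\bar{m}}}{\partial z^{j}}\eta^{k}\eta^{j}$; multiplying by $||b||^{2}$, applying $(\ast)$, and dividing back by $||b||^{2}$ yields
\[
2\,b_{l}\stackrel{a}{G^{l}}=\frac{\partial b_{k}}{\partial z^{j}}\eta^{k}\eta^{j}+\frac{\beta}{||b||^{2}}\,b^{\bar{m}}\frac{\partial b_{\bar{m}}}{\partial z^{j}}\eta^{j}.
\]
Substituting this into (iv), the $\frac{\partial b_{i}}{\partial z^{l}}\eta^{i}\eta^{l}$ terms cancel and (iv) collapses to $\beta\,\eta^{l}\big(\frac{\partial b_{\bar{m}}}{\partial z^{l}}\bar{\eta}^{m}-\frac{\bar{\beta}}{||b||^{2}}b^{\bar{m}}\frac{\partial b_{\bar{m}}}{\partial z^{l}}\big)=0$, that is $b_{j}S_{l\bar{m}}\,\eta^{j}\eta^{l}\bar{\eta}^{m}=0$ with $S_{l\bar{m}}:=\frac{\partial b_{\bar{m}}}{\partial z^{l}}-\frac{b_{\bar{m}}}{||b||^{2}}b^{\bar{p}}\frac{\partial b_{\bar{p}}}{\partial z^{l}}$. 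By the previous paragraph, (ii) is exactly $S_{l\bar{m}}=0$ for all $l,\bar{m}$, so (ii) $\Rightarrow$ (iv) is immediate. The converse is the delicate point: (iv) only asserts that the degree-$(2,1)$ polynomial $b_{j}S_{l\bar{m}}\eta^{j}\eta^{l}\bar{\eta}^{m}$ vanishes, hence its symmetrised coefficients vanish, $b_{j}S_{l\bar{m}}+b_{l}S_{j\bar{m}}=0$; choosing an index $j_{0}$ with $b_{j_{0}}(z)\neq0$ (possible since $b$ is nowhere zero) gives first $S_{j_{0}\bar{m}}=0$ and then $S_{l\bar{m}}=0$ for every $l$, which is (ii). This recovery of the full tensor identity (ii) from the single scalar identity (iv), through non-degeneracy of $b$, is the main obstacle; the rest is bookkeeping with $(\ast)$ and homogeneity. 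Combining (i) $\Leftrightarrow$ (ii), (ii) $\Leftrightarrow$ (iii) and (ii) $\Leftrightarrow$ (iv) proves the lemma.
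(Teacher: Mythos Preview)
Your argument is correct. The reduction of the standing hypothesis to the pointwise identity $(\ast)$ via the linear independence of the monomials $\eta^{i}\bar\eta^{m}$ is legitimate, and each of the three equivalences you run through (ii) is checked cleanly; in particular, the recovery step (iv) $\Rightarrow$ (ii) by symmetrising in $j,l$ and using a component $b_{j_0}\neq 0$ is the right way to pass from the scalar relation back to $S_{l\bar m}=0$. At points where $b$ vanishes all four conditions are trivially $0=0$, so the division by $||b||^{2}$ causes no trouble.

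As for comparison with the paper: this lemma is quoted here from \cite{Al-Mu} and the present paper gives no proof of its own, so there is nothing to compare against in this text. Your write-up therefore supplies more than the paper does. If you want to align it stylistically with how the authors use the lemma later (e.g.\ in Theorems 4.2 and 4.3), you might note explicitly that your analysis of (iv) also yields, under $(\ast)$, the identity $2\,b_{l}\stackrel{a}{G^{l}}=\frac{\partial b_{k}}{\partial z^{j}}\eta^{k}\eta^{j}+\frac{\beta}{||b||^{2}}b^{\bar m}\frac{\partial b_{\bar m}}{\partial z^{j}}\eta^{j}$, since relations of this shape are exactly what get substituted in the Randers and Kropina computations that follow.
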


\begin{proposition}
\cite{Al-Mu} Let $(M,F)$ be a complex Finsler space with $(\alpha ,\beta )-$
metrics. If $\frac{\partial |\beta |^{2}}{\partial z^{i}}=||b||^{2}\frac{%
\partial \alpha ^{2}}{\partial z^{i}}$ and one of the equivalent conditions
from Lemma 4.1 holds, then $N_{j}^{i}=\stackrel{a}{N_{j}^{i}}.$ Moreover, if
$\alpha $ is K\"{a}hler, then $F$ is K\"{a}hler.
\end{proposition}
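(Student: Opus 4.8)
The plan is to compute the Chern-Finsler nonlinear connection $N_{j}^{i}$ of $F$ and the one $\stackrel{a}{N_{j}^{i}}$ of the purely Hermitian metric $\alpha$, and to check that they coincide. Writing $L:=F^{2}$ as a function $L(s,t)$ of $s:=\alpha^{2}$ and $t:=|\beta|^{2}$, the homogeneity $F(z,\lambda\eta)=|\lambda|F(z,\eta)$ forces $L$ to be positively homogeneous of degree one in $(s,t)$, so $sL_{s}+tL_{t}=L$, $sL_{ss}+tL_{st}=0$ and $sL_{st}+tL_{tt}=0$. A direct computation of $g_{i\bar{j}}=\dot{\partial}_{\bar{j}}\dot{\partial}_{i}L$ then yields
\[
g_{i\bar{j}}=L_{s}a_{i\bar{j}}+L_{ss}\ell_{i}\ell_{\bar{j}}+L_{st}(\beta b_{\bar{j}}\ell_{i}+\bar{\beta}b_{i}\ell_{\bar{j}})+(L_{t}+|\beta|^{2}L_{tt})b_{i}b_{\bar{j}},
\]
where $\ell_{i}:=a_{i\bar{k}}\bar{\eta}^{k}$ and $\ell_{\bar{j}}:=a_{l\bar{j}}\eta^{l}$; contracting with $\eta^{i}$ and using the homogeneity relations collapses the second-order terms and leaves $g_{l\bar{m}}\eta^{l}=L_{s}\ell_{\bar{m}}+L_{t}\beta b_{\bar{m}}$.

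Since by definition $N_{j}^{i}g_{i\bar{m}}=\frac{\partial g_{l\bar{m}}}{\partial z^{j}}\eta^{l}=\frac{\partial}{\partial z^{j}}(g_{l\bar{m}}\eta^{l})$, and likewise $\stackrel{a}{N_{j}^{i}}a_{i\bar{m}}=\frac{\partial}{\partial z^{j}}(a_{l\bar{m}}\eta^{l})$ for the purely Hermitian metric $\alpha$, I would substitute the expression for $g_{l\bar{m}}\eta^{l}$, differentiate, and subtract $\stackrel{a}{N_{j}^{i}}g_{i\bar{m}}$ (read off from the display above, using $\stackrel{a}{N_{j}^{i}}\ell_{i}=\frac{\partial\alpha^{2}}{\partial z^{j}}$). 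The $L_{s}$-terms cancel, and after using the hypothesis $\frac{\partial|\beta|^{2}}{\partial z^{j}}=||b||^{2}\frac{\partial\alpha^{2}}{\partial z^{j}}$ to rewrite $\frac{\partial L_{s}}{\partial z^{j}}$ and $\frac{\partial L_{t}}{\partial z^{j}}$, one is left with
\[
(N_{j}^{i}-\stackrel{a}{N_{j}^{i}})g_{i\bar{m}}=L_{st}P_{j}\ell_{\bar{m}}+\Big(L_{tt}\beta P_{j}+L_{t}\big(\frac{\partial\beta}{\partial z^{j}}-\stackrel{a}{N_{j}^{i}}b_{i}\big)\Big)b_{\bar{m}}+L_{t}\beta\frac{\partial b_{\bar{m}}}{\partial z^{j}},
\]
where $P_{j}:=||b||^{2}\frac{\partial\alpha^{2}}{\partial z^{j}}-\bar{\beta}\stackrel{a}{N_{j}^{i}}b_{i}$.

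The heart of the matter is to show this residual vanishes. By hypothesis one of the equivalent conditions of Lemma 4.1 holds, hence all of them do. First, the identity $\frac{\partial|\beta|^{2}}{\partial z^{i}}=||b||^{2}\frac{\partial\alpha^{2}}{\partial z^{i}}$, being an equality in $\eta$, is equivalent to the $z$-pointwise relation $\frac{\partial(b_{k}b_{\bar{l}})}{\partial z^{i}}=||b||^{2}\frac{\partial a_{k\bar{l}}}{\partial z^{i}}$; feeding this into $P_{j}$ and using $b_{\bar{m}}\bar{\eta}^{m}=\bar{\beta}$ and $b_{\bar{m}}\bar{b}^{m}=||b||^{2}$ gives $P_{j}=\frac{\beta}{||b||^{2}}\frac{\partial b_{\bar{m}}}{\partial z^{j}}(||b||^{2}\bar{\eta}^{m}-\bar{\beta}\bar{b}^{m})$, which is zero by condition (ii) of Lemma 4.1. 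With $P_{j}=0$, the relation $P_{j}=\bar{\beta}(\frac{\partial\beta}{\partial z^{j}}-\stackrel{a}{N_{j}^{i}}b_{i})+\beta\frac{\partial\bar{\beta}}{\partial z^{j}}$ (also a consequence of the hypothesis) turns the $b_{\bar{m}}$-coefficient into $-L_{t}\beta\bar{\beta}^{-1}\frac{\partial\bar{\beta}}{\partial z^{j}}$, so the residual reduces to $L_{t}\beta(\frac{\partial b_{\bar{m}}}{\partial z^{j}}-\bar{\beta}^{-1}\frac{\partial\bar{\beta}}{\partial z^{j}}b_{\bar{m}})$, which vanishes by condition (iii) of Lemma 4.1. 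Therefore $(N_{j}^{i}-\stackrel{a}{N_{j}^{i}})g_{i\bar{m}}=0$ for all $\bar{m}$, and positive-definiteness of $(g_{i\bar{j}})$ gives $N_{j}^{i}=\stackrel{a}{N_{j}^{i}}$. The only genuine obstacle here is bookkeeping: carrying the $\beta$, $\bar{\beta}$ weights and the raising and lowering of the $b$-indices through the differentiations so that $P_{j}$ and the $b_{\bar{m}}$-coefficient are recognized as precisely the quantities annihilated by Lemma 4.1 and by the pointwise form of the hypothesis.

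For the final assertion, suppose $\alpha$ is K\"{a}hler. Since $\alpha$ comes from the purely Hermitian metric $a_{i\bar{j}}(z)$, all the K\"{a}hler nuances coincide for it (as recalled in \S2), so $\frac{\partial a_{j\bar{m}}}{\partial z^{k}}=\frac{\partial a_{k\bar{m}}}{\partial z^{j}}$. By the first part $N_{k}^{i}=\stackrel{a}{N_{k}^{i}}=a^{\bar{m}i}\frac{\partial a_{l\bar{m}}}{\partial z^{k}}\eta^{l}$, and since $a^{\bar{m}i}$ does not depend on $\eta$ we get $L_{jk}^{i}=\dot{\partial}_{j}N_{k}^{i}=a^{\bar{m}i}\frac{\partial a_{j\bar{m}}}{\partial z^{k}}$, symmetric in $j$ and $k$; hence $T_{jk}^{i}=L_{jk}^{i}-L_{kj}^{i}=0$, so $F$ is strongly K\"{a}hler, in particular K\"{a}hler.
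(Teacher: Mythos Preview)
The paper does not give its own proof of this proposition; it is quoted verbatim from \cite{Al-Mu} and used as a black box (see the citation attached to the statement). So there is no in-paper argument to compare against.

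That said, your direct computation is sound. Writing $L=L(s,t)$ with $s=\alpha^{2}$, $t=|\beta|^{2}$ and using Euler's relations to collapse $g_{l\bar m}\eta^{l}$ to $L_{s}\ell_{\bar m}+L_{t}\beta b_{\bar m}$ is exactly the right move; the identity $(N_{j}^{i}-\stackrel{a}{N_{j}^{i}})g_{i\bar m}=L_{st}P_{j}\ell_{\bar m}+[L_{tt}\beta P_{j}+L_{t}(\partial_{z^{j}}\beta-\stackrel{a}{N_{j}^{i}}b_{i})]b_{\bar m}+L_{t}\beta\,\partial_{z^{j}}b_{\bar m}$ checks out, and your identification of $P_{j}$ and of the $b_{\bar m}$--coefficient with the quantities killed by Lemma~4.1~(ii) and~(iii) is correct. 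The passage from the hypothesis $\partial_{z^{i}}|\beta|^{2}=||b||^{2}\partial_{z^{i}}\alpha^{2}$ to the pointwise identity $\partial_{z^{i}}(b_{k}b_{\bar l})=||b||^{2}\partial_{z^{i}}a_{k\bar l}$ is legitimate (the monomials $\eta^{k}\bar\eta^{l}$ are linearly independent), and it is genuinely needed, since you contract with $\bar b^{m}$ rather than $\bar\eta^{m}$. The K\"ahler conclusion via $L_{jk}^{i}=\dot\partial_{j}N_{k}^{i}=a^{\bar m i}\partial_{z^{k}}a_{j\bar m}$ is immediate once $N_{k}^{i}=\stackrel{a}{N_{k}^{i}}$.

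One cosmetic point: when you divide by $\bar\beta$ to turn the $b_{\bar m}$--coefficient into $-L_{t}\beta\bar\beta^{-1}\partial_{z^{j}}\bar\beta$, you implicitly assume $\beta\neq 0$. This is harmless---the residual is polynomial in $\eta,\bar\eta$ along each fiber, so vanishing on the dense open set $\{\beta\neq 0\}$ forces vanishing everywhere---but it is worth a one-line remark.
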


\begin{theorem}
Let $(M,F)$ be a complex Finsler space with $(\alpha ,\beta )-$ metrics. If $%
\frac{\partial |\beta |^{2}}{\partial z^{i}}=||b||^{2}\frac{\partial \alpha
^{2}}{\partial z^{i}}$ and one of the equivalent conditions from Lemma 4.1
holds, then the space is generalized Berwald. Moreover if $\alpha $ is
K\"{a}hler then the space is Berwald.
\end{theorem}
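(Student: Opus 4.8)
The plan is to reduce the statement to the already-established machinery for generalized Berwald spaces, namely Theorem 4.2's characterization (\emph{$(M,F)$ is generalized Berwald $\Leftrightarrow$ $G^i$ are holomorphic in $\eta$ $\Leftrightarrow$ $B\Gamma$ is of $(1,0)$-type}) together with Corollary 3.2 (\emph{if the $C$--$F$ coefficients $L^i_{jk}$ depend only on $z$, the space is generalized Berwald}). So the whole burden is to show that, under the hypotheses $\frac{\partial|\beta|^2}{\partial z^i}=||b||^2\frac{\partial\alpha^2}{\partial z^i}$ plus one of the equivalent conditions of Lemma 4.1, the Chern--Finsler coefficients $L^i_{jk}$ of $F$ reduce to the Chern--Finsler coefficients $\stackrel{a}{L^i_{jk}}$ of the underlying purely Hermitian metric $\alpha$, and the latter depend only on $z$ because $\alpha$ is purely Hermitian.

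First I would invoke Proposition 4.1: under exactly these hypotheses one already knows $N^i_j=\stackrel{a}{N^i_j}$, i.e.\ the nonlinear connection of $F$ coincides with that of $\alpha$. Since $\stackrel{a}{N^i_j}=a^{\bar m i}\frac{\partial a_{l\bar m}}{\partial z^j}\eta^l$ is linear in $\eta$ (the $a_{l\bar m}$ depending on $z$ alone), $N^i_j$ is linear in $\eta$; hence $2G^i=N^i_j\eta^j$ is quadratic in $\eta$ and in particular holomorphic in $\eta$, $\dot\partial_{\bar k}G^i=0$. By Theorem 4.2 (ii)$\Rightarrow$(i) this already gives that $(M,F)$ is generalized Berwald. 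Alternatively, and more in the spirit of Corollary 3.2, $L^i_{jk}=\dot\partial_j N^i_k=\dot\partial_j\stackrel{a}{N^i_k}=a^{\bar m i}\frac{\partial a_{k\bar m}}{\partial z^j}=\stackrel{a}{L^i_{jk}}(z)$, so the $C$--$F$ coefficients of $F$ depend only on $z$ and Corollary 3.2 applies directly. Either route closes the first assertion.

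For the second assertion, suppose in addition that $\alpha$ is K\"ahler. Proposition 4.1 then tells us $F$ is K\"ahler as well. Combined with the fact just established that $L^i_{jk}=L^i_{jk}(z)$, this is precisely Aikou's definition of a complex Berwald space (recalled just before Theorem 3.4): K\"ahler plus $L^i_{jk}=L^i_{jk}(z)$. Hence $(M,F)$ is (complex) Berwald, which is what is claimed.

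The only genuinely non-routine point is the passage $N^i_j=\stackrel{a}{N^i_j}\Rightarrow L^i_{jk}=\stackrel{a}{L^i_{jk}}$, and even that is immediate from $L^i_{jk}=\dot\partial_j N^i_k$ (formula (\ref{1.3})) once one is willing to differentiate the identity of Proposition 4.1 in $\eta^j$; the subtlety is merely that Proposition 4.1 is stated as an equality of functions on $\widetilde{T'M}$, so differentiating it is legitimate. I expect the main obstacle, such as it is, to be bookkeeping: making sure the hypothesis $\frac{\partial|\beta|^2}{\partial z^i}=||b||^2\frac{\partial\alpha^2}{\partial z^i}$ and the chosen item of Lemma 4.1 are exactly the ones needed to quote Proposition 4.1 verbatim, so that no extra computation with the $(\alpha,\beta)$-structure of $F$ is required. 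Everything else is a direct appeal to the stated results.
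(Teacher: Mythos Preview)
Your approach is correct and is essentially the paper's own proof: invoke Proposition 4.1 to get $N^i_j=\stackrel{a}{N^i_j}$, hence $G^i=\stackrel{a}{G^i}$ is holomorphic in $\eta$ (generalized Berwald), and if $\alpha$ is K\"ahler then $F$ is K\"ahler and $L^i_{jk}=\stackrel{a}{L^i_{jk}}(z)$ gives Berwald. Two small slips to fix: the characterization ``generalized Berwald $\Leftrightarrow$ $G^i$ holomorphic in $\eta$'' is Theorem 3.6 (not 4.2, which concerns Randers spaces), and the statement ``$L^i_{jk}(z)\Rightarrow$ generalized Berwald'' is Corollary 3.3 (not 3.2); also $\dot\partial_j\stackrel{a}{N^i_k}=a^{\bar m i}\frac{\partial a_{j\bar m}}{\partial z^k}$, with $j$ and $k$ swapped from what you wrote, though this does not affect the conclusion.
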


\begin{proof}
By Proposition 4.1 we have $G^i=a^{\bar{m}k}\frac{\partial a_{l\bar{m}%
}}{\partial z^j}\eta ^l\eta ^j$ which is holomorphic in $\eta ,$ i.e. the
space is generalized Berwald. Adding the K\"{a}hler property for $a_{i%
\bar{j}}$ then the space becomes one complex Berwald.
\end{proof}

Further on, we asunder focus on two classes of complex $(\alpha ,\beta )-$
metrics, namely the complex Randers metrics $F:=\alpha +|\beta |$ and the
complex Kropina metrics $F:=\frac{\alpha ^2}{|\beta |},$ $|\beta |\neq 0.$

\subsection{Complex Randers metric $F:=\alpha +|\beta |$}

For the complex Randers metric $F:=\alpha +|\beta |$ we have, (\cite{Al-Mu2}%
)
\begin{eqnarray}
\frac{\partial \alpha }{\partial \eta ^i} &=&\frac 1{2\alpha }l_i\;\;;\;\;%
\frac{\partial |\beta |}{\partial \eta ^i}=\frac{\bar{\beta}}{2|\beta |}%
b_i\;;\;\;\eta _i:=\frac{\partial L}{\partial \eta ^i}=\frac F\alpha l_i+%
\frac{F\bar{\beta}}{|\beta |}b_i,  \label{4.1} \\
N_j^i &=&\stackrel{a}{N_j^i}+\frac 1\gamma \left( l_{\bar{r}}\frac{\partial
\bar{b}^r}{\partial z^j}-\frac{\beta ^2}{|\beta |^2}\frac{\partial b_{\bar{r}%
}}{\partial z^j}\bar{\eta}^r\right) \xi ^i+\frac \beta {2|\beta |}k^{%
\overline{r}i}\frac{\partial b_{\bar{r}}}{\partial z^j},  \nonumber
\end{eqnarray}
where $k^{\bar{r}i}:=2\alpha a^{\bar{j}i}+\frac{2(\alpha ||b||^2+2|\beta |)}%
\gamma \eta ^i\bar{\eta}^r-\frac{2\alpha ^3}\gamma b^i\bar{b}^r-\frac{%
2\alpha }\gamma (\bar{\beta}\eta ^i\bar{b}^r+\beta b^i\bar{\eta}^r)$, $%
\gamma :=L+\alpha ^2(||b||^2-1)$, $\xi ^i:=\bar{\beta}\eta ^i+\alpha ^2b^i$
and so, the spray coefficients are
\begin{equation}
G^i=\stackrel{a}{G^i}+\frac 1{2\gamma }\left( l_{\bar{r}}\frac{\partial \bar{%
b}^r}{\partial z^j}-\frac{\beta ^2}{|\beta |^2}\frac{\partial b_{\bar{r}}}{%
\partial z^j}\bar{\eta}^r\right) \xi ^i\eta ^j+\frac \beta {4|\beta |}k^{%
\overline{r}i}\frac{\partial b_{\bar{r}}}{\partial z^j}\eta ^j.  \label{4.2}
\end{equation}

Moreover, for the weakly K\"{a}hler complex Randers spaces we have proven.

\begin{proposition}
(\cite{Al-Mu2}) A complex Randers space $(M,F)$ is weakly K\"{a}hler if and
only if
\[
\frac{\alpha ^{2}|\beta |}{\gamma \delta }\left[ \beta \frac{\alpha
||b||^{2}+|\beta |}{|\beta |}\frac{\partial b_{\bar{m}}}{\partial z^{r}}\bar{%
\eta}^{m}+\bar{\beta}\left( \frac{\partial b_{r}}{\partial z^{l}}-b^{\bar{m}}%
\frac{\partial a_{l\bar{m}}}{\partial z^{r}}\right) \eta ^{l}-\alpha |\beta
|b^{\bar{m}}\frac{\partial b_{\bar{m}}}{\partial z^{r}}\right] \eta
^{r}C_{k}
\]
\begin{equation}
-\left( \alpha \bar{\beta}F_{kl}+\alpha b_{l}\frac{\partial b_{\bar{r}}}{%
\partial z^{k}}\bar{\eta}^{r}+2|\beta |a_{l\bar{r}}\Gamma _{\bar{j}k}^{\bar{r%
}}\bar{\eta}^{j}\right) \eta ^{l}+\alpha b_{k}\frac{\partial b_{\bar{m}}}{%
\partial z^{r}}\bar{\eta}^{m}\eta ^{r}=0,  \label{4.3}
\end{equation}
where $C_{j}:=C_{j\bar{h}k}g^{\bar{h}k}=\delta \left( \frac{1}{\alpha ^{2}}%
l_{j}-\frac{\bar{\beta}}{|\beta |^{2}}b_{j}\right) $ with $\delta :=\frac{%
\alpha ^{2}||b||^{2}-|\beta |^{2}}{2\gamma }-\frac{n|\beta |}{2F},$ $\Gamma
_{\bar{j}i}^{\bar{r}}:=\frac{1}{2}a^{\bar{r}k}(\frac{\partial a_{k\bar{j}}}{%
\partial z^{i}}$ $-\frac{\partial a_{i\bar{j}}}{\partial z^{k}})$ and $%
F_{il}:=\frac{\partial b_{l}}{\partial z^{i}}-\frac{\partial bi}{\partial
z^{l}}$.
\end{proposition}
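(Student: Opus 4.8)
The plan is to start from the definition of weak Kählerianity, $g_{i\bar l}T^i_{jk}\eta^j\bar\eta^l=0$ (with $k$ free), where $T^i_{jk}=L^i_{jk}-L^i_{kj}$ is the Chern--Finsler torsion, and to collapse it to a spray identity. Since $L^i_{jk}=\dot{\partial}_jN^i_k$, one has $L^i_{jk}\eta^j=N^i_k$; and from $N^i_j\eta^j=2G^i$ together with $\stackrel{c}{N^i_k}=\dot{\partial}_kG^i$ one gets $L^i_{kj}\eta^j=(\dot{\partial}_kN^i_j)\eta^j=\dot{\partial}_k(2G^i)-N^i_k=2\stackrel{c}{N^i_k}-N^i_k$, hence $T^i_{jk}\eta^j=2(N^i_k-\stackrel{c}{N^i_k})$. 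Contracting with $g_{i\bar l}\bar\eta^l=\eta_i=\partial L/\partial\eta^i$, weak Kählerianity becomes equivalent to $\eta_i(N^i_k-\stackrel{c}{N^i_k})=0$ for each $k$, i.e. $\eta_iN^i_k=\eta_i\dot{\partial}_kG^i$; all of the explicit Randers data will be substituted into this equation.

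For $F=\alpha+|\beta|$ the ingredients are at hand: $\eta_i=\frac F\alpha l_i+\frac{F\bar\beta}{|\beta|}b_i$ and $N^i_k$ from (4.1), $G^i$ from (4.2), each split into a purely Hermitian part ($\stackrel{a}{N^i_k}$, $\stackrel{a}{G^i}$) plus the $\beta$-correction carried by $\xi^i=\bar\beta\eta^i+\alpha^2b^i$ and by $k^{\bar r i}$. Here $\stackrel{a}{N^i_k}-\dot{\partial}_k\stackrel{a}{G^i}=\frac12\stackrel{a}{T^i_{jk}}\eta^j$ with $\stackrel{a}{T^i_{jk}}=\stackrel{a}{L^i_{jk}}-\stackrel{a}{L^i_{kj}}=a^{\bar l i}(\partial a_{j\bar l}/\partial z^k-\partial a_{k\bar l}/\partial z^j)$, and the two contractions $l_i\stackrel{a}{T^i_{jk}}\eta^j=2a_{l\bar r}\Gamma^{\bar r}_{\bar j k}\bar\eta^j\eta^l$ and $b_i\stackrel{a}{T^i_{jk}}\eta^j$ deliver exactly the $\alpha$-Kähler-obstruction term $2|\beta|a_{l\bar r}\Gamma^{\bar r}_{\bar j k}\bar\eta^j\eta^l$ and the $b^{\bar m}\partial a_{l\bar m}/\partial z^r$ term of (4.3), where $\Gamma^{\bar r}_{\bar j i}=\frac12a^{\bar r k}(\partial a_{k\bar j}/\partial z^i-\partial a_{i\bar j}/\partial z^k)$.

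The $\beta$-correction then supplies the rest. Contracting $\eta_i$ against $\xi^i$ and $k^{\bar r i}$ with the scalar identities $l_i\eta^i=\alpha^2$, $b_i\eta^i=\beta$, $l_ib^i=\bar\beta$, $b_ib^i=||b||^2$, $\eta_i\eta^i=L$, $\eta_ia^{\bar j i}=\frac F\alpha\bar\eta^j+\frac{F\bar\beta}{|\beta|}\bar b^j$ makes those contractions collapse to explicit rational functions of $\alpha,\beta,\bar\beta,||b||^2,\gamma$, and the $z$-derivatives of $b_i$ regroup into $F_{kl}=\partial b_l/\partial z^k-\partial b_k/\partial z^l$, into $\partial b_{\bar m}/\partial z^r\,\bar\eta^m$, and into $\partial b_r/\partial z^l$. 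The leftover pieces that still depend on the full metric $g_{i\bar j}$ — they arrive through $\dot{\partial}_k\eta_i$, i.e. the Cartan tensor — are repackaged using the known value of the complex mean Cartan torsion for this metric, $C_k=C_{k\bar hj}g^{\bar hj}=\delta(\frac1{\alpha^2}l_k-\frac{\bar\beta}{|\beta|^2}b_k)$ with $\delta=\frac{\alpha^2||b||^2-|\beta|^2}{2\gamma}-\frac{n|\beta|}{2F}$, which is exactly what lets one write the final answer with the scalar $\frac{\alpha^2|\beta|}{\gamma\delta}$ in front of the $C_k$-terms; note that since $C_k$ itself carries $\delta$ as a factor, the $\gamma\delta$ in the denominator of (4.3) is only apparent and the criterion stays regular even where $\delta$ vanishes.

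I expect the main obstacle to be purely computational bookkeeping: pushing $\eta_i$ through $k^{\bar r i}$ and through one further $\eta$-differentiation of the already long (4.2), and then verifying that the whole rational expression in $\alpha,\beta,\bar\beta,\gamma$ reassembles into exactly the four groups displayed in (4.3) with no residue — in particular, matching the coefficients of $C_k$ correctly, since that is the one place where the non-Hermitian character of the metric is felt. No separate converse argument is needed: the reduction to $\eta_i(N^i_k-\stackrel{c}{N^i_k})=0$ is reversible and every subsequent step only rewrites that one equation, so simplifying in one direction and reading the chain backwards yields both implications.
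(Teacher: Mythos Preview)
The paper does not prove this proposition: it is quoted from \cite{Al-Mu2} with no argument supplied, so there is no in-paper proof to compare against. Your outline is the natural route and is sound. The reduction of the weak K\"ahler condition $g_{i\bar l}T^i_{jk}\eta^j\bar\eta^l=0$ to $\eta_i(N^i_k-\stackrel{c}{N^i_k})=0$ via $T^i_{jk}\eta^j=2(N^i_k-\stackrel{c}{N^i_k})$ is correct (using $L^i_{jk}\eta^j=N^i_k$ by $1$-homogeneity and $L^i_{kj}\eta^j=2\stackrel{c}{N^i_k}-N^i_k$), and from there everything is indeed Randers-specific algebra with the data (4.1)--(4.2). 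One small inaccuracy in your narrative: $\dot\partial_k\eta_i=\partial^2L/\partial\eta^i\partial\eta^k$ is not the Cartan tensor $C_{i\bar jk}=\dot\partial_k g_{i\bar j}$ (and indeed $C_{i\bar jk}\bar\eta^j=0$), so the trace $C_k$ does not enter through that door; rather, the particular combination $\frac{1}{\alpha^2}l_k-\frac{\bar\beta}{|\beta|^2}b_k$ falls out of the algebra and is then \emph{recognized} as $C_k/\delta$ a posteriori --- exactly what you yourself observe when you note that the $\delta$ in the denominator of (4.3) is only apparent. This does not affect the validity of the plan.
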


\begin{theorem}
Let $(M,F)$ be a connected complex Randers space. Then, $(M,F)$ is a
generalized Berwald space if and only if $(\bar{\beta}l_{\bar{r}}\frac{%
\partial \bar{b}^{r}}{\partial z^{j}}+\beta \frac{\partial b_{\bar{r}}}{%
\partial z^{j}}\bar{\eta}^{r})\eta ^{j}=0.$
\end{theorem}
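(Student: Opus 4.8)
The key is Theorem 4.1: a complex $(\alpha,\beta)$-space is generalized Berwald whenever $\frac{\partial|\beta|^2}{\partial z^i}=||b||^2\frac{\partial\alpha^2}{\partial z^i}$ together with one of the equivalent conditions of Lemma 4.1 holds — but that gives only a sufficient condition via the stronger hypothesis $N_j^i=\stackrel{a}{N_j^i}$. For Randers we have the explicit formula (\ref{4.2}) for the spray coefficients, so the cleaner route is direct: by Theorem 3.6, $(M,F)$ is generalized Berwald if and only if $G^i$ is holomorphic in $\eta$, i.e. $\dot\partial_{\bar h}G^i=0$. So I would simply differentiate (\ref{4.2}) with respect to $\bar\eta^h$ and determine when the result vanishes identically.

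\textbf{First} I would isolate the non-holomorphic part of (\ref{4.2}): $\stackrel{a}{G^i}$ is already holomorphic in $\eta$ (it is quadratic in $\eta$ with $z$-dependent coefficients, since $\alpha$ is purely Hermitian), so $\dot\partial_{\bar h}G^i=0$ is equivalent to the vanishing of $\dot\partial_{\bar h}$ applied to the remaining two terms, namely $\frac1{2\gamma}\big(l_{\bar r}\frac{\partial\bar b^r}{\partial z^j}-\frac{\beta^2}{|\beta|^2}\frac{\partial b_{\bar r}}{\partial z^j}\bar\eta^r\big)\xi^i\eta^j+\frac{\beta}{4|\beta|}k^{\bar r i}\frac{\partial b_{\bar r}}{\partial z^j}\eta^j$. \textbf{Second}, rather than brute-force the $\bar\eta^h$-derivative of this messy expression, I expect the efficient move is to contract: multiply the desired holomorphy condition by $\eta^i$ or pair against $\eta_i$ (the dual vector from (\ref{4.1})), exploiting that $G^i$ is $2$-homogeneous in $\eta$ so that $G^i$ holomorphic $\iff$ $G^i\eta_i$ (a scalar) holomorphic, by an argument of the homogeneity-plus-maximum-principle type already used in Proposition 3.2 / Corollary 3.2. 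Working with the scalar $2G^i\eta_i$ collapses the vector index and the combination $\xi^i\eta^j$, $k^{\bar r i}\eta^j$ becomes manageable, and the $k^{\bar r i}$ terms that are proportional to $a^{\bar j i}$, $\eta^i$, $b^i$ contract against $\eta_i$ into scalar multiples of $\alpha^2$, $|\beta|^2$, $\beta$, etc.

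\textbf{Third}, after this contraction the non-holomorphic dependence on $\bar\eta$ should organize into a single expression proportional to $\big(\bar\beta\, l_{\bar r}\frac{\partial\bar b^r}{\partial z^j}+\beta\frac{\partial b_{\bar r}}{\partial z^j}\bar\eta^r\big)\eta^j$ times some factor involving $\alpha$, $|\beta|$, $\gamma$ that is nowhere zero on the domain where the metric is nondegenerate (here connectedness of $M$ is what lets me conclude the bracket itself vanishes identically rather than just on a subset). Setting the whole thing to zero then gives exactly the stated condition $(\bar\beta l_{\bar r}\frac{\partial\bar b^r}{\partial z^j}+\beta\frac{\partial b_{\bar r}}{\partial z^j}\bar\eta^r)\eta^j=0$, and conversely if that bracket vanishes one reads off from (\ref{4.2}) that $G^i=\stackrel{a}{G^i}$ is holomorphic. \textbf{The main obstacle} I anticipate is the bookkeeping in the $\bar\eta$-differentiation of $k^{\bar r i}$ and of $\frac{\beta}{|\beta|}$, $\frac{\beta^2}{|\beta|^2}$, $\frac1\gamma$: one must carefully collect the terms that are genuinely non-holomorphic (those surviving $\dot\partial_{\bar h}$) and verify that all the "$\beta\bar\beta$-symmetric" pieces coming from $\stackrel{a}{G^i}$-adjacent terms cancel, leaving precisely the advertised antiholomorphic combination. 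The homogeneity reduction to a scalar is what keeps this from ballooning, so I would set that up before touching any derivative.
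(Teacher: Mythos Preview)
Your converse direction is essentially what the paper does: once the bracket vanishes, differentiating it in $\bar\eta^m$ gives $(l_{\bar r}\frac{\partial\bar b^r}{\partial z^j}b_{\bar m}+\beta\frac{\partial b_{\bar m}}{\partial z^j})\eta^j=0$, and combining the two relations yields $a^{\bar m i}\frac{\partial b_{\bar m}}{\partial z^j}\eta^j=\frac{\beta}{|\beta|^2}\frac{\partial b_{\bar r}}{\partial z^j}\bar\eta^r b^i\eta^j$ and $\bar b^m\frac{\partial b_{\bar m}}{\partial z^j}\eta^j=\frac{||b||^2\beta}{|\beta|^2}\frac{\partial b_{\bar r}}{\partial z^j}\bar\eta^r\eta^j$, which plugged into (\ref{4.2}) collapse everything to $G^i=\stackrel{a}{G^i}$. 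So ``reads off'' is an overstatement, but the idea is right.

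The forward direction, however, has a genuine gap. Your proposed reduction ``$G^i$ holomorphic $\iff$ $G^i\eta_i$ holomorphic'' is false in both directions. Since $\eta_i=g_{i\bar j}\bar\eta^j$ depends on $\bar\eta$, one has $\dot\partial_{\bar h}(G^i\eta_i)=(\dot\partial_{\bar h}G^i)\eta_i+G^ig_{i\bar h}$, and the second term does not vanish even when $G^i$ is holomorphic; conversely, a single scalar contraction cannot recover the $n$ conditions $\dot\partial_{\bar h}G^i=0$. Nothing in the homogeneity/maximum-principle argument of Proposition 3.2 gives such an equivalence: that argument concerns a single function that is simultaneously holomorphic and $0$-homogeneous, not a passage from a vector to a scalar.

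What the paper actually does is different and relies on the standard $(\alpha,\beta)$-metric device you did not invoke: separation into rational and irrational parts. Since generalized Berwald means $2G^i=\stackrel{B}{L^i_{jk}}(z)\eta^j\eta^k$ is quadratic in $\eta$, one clears denominators in (\ref{4.2}) and obtains an identity of the form $\alpha|\beta|\cdot A^i + |\beta|^2\cdot B^i=0$ with $A^i,B^i$ rational in $(\eta,\bar\eta)$; because $\alpha|\beta|$ is irrational this forces $A^i=0$ and $B^i=0$ separately. Each of these is then contracted with \emph{both} $l_i$ and $b_i$ (two independent covectors, not one), producing four scalar equations whose combination yields $(\stackrel{a}{G^i}-G^i)l_i=0$ and the claimed bracket condition. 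The double contraction is what compensates for the loss of the vector index; your single contraction with $\eta_i$ would not.
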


\begin{proof}
If $(M,F)$ is generalized Berwald then $2G^i=\stackrel{B}{L_{jk}^i}(z)
$ $\eta ^j\eta ^k,$ which means that $G^i$ is quadratic in $\eta .$ Thus,
using (\ref{4.2}) we have

$\alpha |\beta |\mathbf{\{}-\beta \mathbf{[}(\alpha ^2||b||^2+|\beta |^2)a^{%
\bar{r}i}+||b||^2\bar{\eta}^r\eta ^i-\alpha ^2\bar{b}^rb^i-\bar{\beta}\eta ^i%
\bar{b}^r-\beta b^i\bar{\eta}^r\mathbf{]}\frac{\partial b_{\bar{r}}}{%
\partial z^j}\eta ^j$

$+4|\beta |^2(G^i-\stackrel{a}{G^i})\mathbf{\}}+|\beta |^2\mathbf{[}2(\alpha
^2||b||^2+|\beta |^2)(G^i-\stackrel{a}{G^i})-2\alpha ^2\beta a^{\bar{r}i}%
\frac{\partial b_{\bar{r}}}{\partial z^j}\eta ^j$

$-(\bar{\beta}l_{\bar{r}}\frac{\partial \bar{b}^r}{\partial z^j}+\beta \frac{%
\partial b_{\bar{r}}}{\partial z^j}\bar{\eta}^r)\eta ^j\eta ^i-\frac{\alpha
^2\beta }{|\beta |^2}(\bar{\beta}l_{\bar{r}}\frac{\partial \bar{b}^r}{%
\partial z^j}-\beta \frac{\partial b_{\bar{r}}}{\partial z^j}\bar{\eta}%
^r)\eta ^jb^i\mathbf{]}=0,$\newline
which contains an irrational part and another rational one. Thus, we obtain
\begin{eqnarray*}
&&\beta \left[ (\alpha ^2||b||^2+|\beta |^2)a^{\bar{r}i}+||b||^2\bar{\eta}%
^r\eta ^i-\alpha ^2\bar{b}^rb^i-\bar{\beta}\eta ^i\bar{b}^r-\beta b^i\bar{%
\eta}^r\right] \frac{\partial b_{\bar{r}}}{\partial z^j}\eta ^j \\
&=&4|\beta |^2(G^i-\stackrel{a}{G^i})\;\mbox{  and  }\; \\
&&(\bar{\beta}l_{\bar{r}}\frac{\partial \bar{b}^r}{\partial z^j}+\beta \frac{%
\partial b_{\bar{r}}}{\partial z^j}\bar{\eta}^r)\eta ^j\eta ^i+\frac{\alpha
^2\beta }{|\beta |^2}(\bar{\beta}l_{\bar{r}}\frac{\partial \bar{b}^r}{%
\partial z^j}-\beta \frac{\partial b_{\bar{r}}}{\partial z^j}\bar{\eta}%
^r)\eta ^jb^i+2\alpha ^2\beta a^{\bar{r}i}\frac{\partial b_{\bar{r}}}{%
\partial z^j}\eta ^j \\
&=&2(\alpha ^2||b||^2+|\beta |^2)(G^i-\stackrel{a}{G^i}).
\end{eqnarray*}
Theirs contractions by $b_i$ and $l_i$ yield
\begin{equation}
(G^i-\stackrel{a}{G^i})b_i=0;  \label{4.3'}
\end{equation}
\begin{eqnarray}
4|\beta |^2(\stackrel{a}{G^i}-G^i)l_i+2\beta \alpha ^2(||b||^2\bar{\eta}^r-%
\bar{\beta}\bar{b}^r)\frac{\partial b_{\bar{r}}}{\partial z^j}\eta ^j &=&0;
\nonumber \\
\bar{\beta}(\alpha ^2||b||^2+|\beta |^2)l_{\bar{r}}\frac{\partial \bar{b}^r}{%
\partial z^j}\eta ^j-\beta (\alpha ^2||b||^2-|\beta |^2)\frac{\partial b_{%
\bar{r}}}{\partial z^j}\bar{\eta}^r\eta ^j+2\alpha ^2|\beta |^2\bar{b}^r%
\frac{\partial b_{\bar{r}}}{\partial z^j}\eta ^j &=&0;  \nonumber \\
(\alpha ^2||b||^2+|\beta |^2)(\stackrel{a}{G^i}-G^i)l_i+\alpha ^2(\bar{\beta}%
l_{\bar{r}}\frac{\partial \bar{b}^r}{\partial z^j}+\beta \frac{\partial b_{%
\bar{r}}}{\partial z^j}\bar{\eta}^r)\eta ^j &=&0.  \nonumber
\end{eqnarray}

Adding the second and the third relations from (\ref{4.3'}), we obtain

$4|\beta |^2(\stackrel{a}{G^i}-G^i)l_i+(\alpha ^2||b||^2+|\beta |^2)(\bar{%
\beta}l_{\bar{r}}\frac{\partial \bar{b}^r}{\partial z^j}+\beta \frac{%
\partial b_{\bar{r}}}{\partial z^j}\bar{\eta}^r)\eta ^j=0.$

This together with the fourth equation from (\ref{4.3'}) implies $(\stackrel{%
a}{G^i}-G^i)l_i=0$ and $(\bar{\beta}l_{\bar{r}}\frac{\partial \bar{b}^r}{%
\partial z^j}+\beta \frac{\partial b_{\bar{r}}}{\partial z^j}\bar{\eta}%
^r)\eta ^j=0.$

Conversely, if $(\bar{\beta}l_{\bar{r}}\frac{\partial \bar{b}^r}{\partial z^j%
}+\beta \frac{\partial b_{\bar{r}}}{\partial z^j}\bar{\eta}^r)\eta ^j=0,$ by
derivation with respect to $\bar{\eta}^m$ we deduce $(l_{\bar{r}}\frac{%
\partial \bar{b}^r}{\partial z^j}b_{\bar{m}}+\beta \frac{\partial b_{\bar{m}}%
}{\partial z^j})\eta ^j=0.$ The last two relations give
\[
a^{\bar{m}i}\frac{\partial b_{\bar{m}}}{\partial z^j}\eta ^j=\frac \beta
{|\beta |^2}\frac{\partial b_{\bar{r}}}{\partial z^j}\bar{\eta}^rb^i\eta ^j\;%
\mbox{  and  }\;\bar{b}^m\frac{\partial b_{\bar{m}}}{\partial z^j}\eta
^j=||b||^2\frac \beta {|\beta |^2}\frac{\partial b_{\bar{r}}}{\partial z^j}%
\bar{\eta}^r\eta ^j,
\]
which substituted into (\ref{4.2}) imply $G^i=\stackrel{a}{G^i}$ and so, $G^i
$ are holomorphic in $\eta ,$ i.e. the space is generalized Berwald.
\end{proof}

\begin{theorem}
Let $(M,F)$ be a connected complex Randers space. Then, $(M,F)$ is a complex
Berwald space if and only if it is in the same time generalized Berwald and
weakly K\"{a}hler.
\end{theorem}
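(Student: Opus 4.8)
The statement is an equivalence, so I would treat the two implications separately; only the backward one genuinely uses the Randers structure.

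\emph{Complex Berwald $\Rightarrow$ generalized Berwald and weakly K\"ahler.} This direction is immediate and valid for any complex Finsler space. By Theorem 3.4 a complex Berwald space is the same as a $G$-K\"ahler space, hence it is K\"ahler and its spray coefficients $G^{i}$ are holomorphic in $\eta$. A K\"ahler space is automatically weakly K\"ahler (contract $T^{i}_{jk}\eta^{j}=0$ with $g_{i\bar{l}}\bar{\eta}^{l}$), and holomorphy of $G^{i}$ is, by Theorem 3.6, exactly the definition of a generalized Berwald space. So a complex Berwald Randers space is automatically both generalized Berwald and weakly K\"ahler, and no Randers specifics are needed here.

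\emph{Generalized Berwald and weakly K\"ahler $\Rightarrow$ complex Berwald.} Here the target is the K\"ahler property: once $(M,F)$ is shown to be K\"ahler, the generalized Berwald hypothesis already supplies holomorphy of $G^{i}$ (Theorem 3.6), and K\"ahler together with holomorphic $G^{i}$ is $G$-K\"ahler, hence complex Berwald, by Theorem 3.4. The plan is to feed the generalized Berwald condition into the weakly K\"ahler identity of Proposition 4.3 and simplify. First I would use Theorem 4.4: being generalized Berwald is equivalent to $(\bar{\beta}l_{\bar{r}}\frac{\partial\bar{b}^{r}}{\partial z^{j}}+\beta\frac{\partial b_{\bar{r}}}{\partial z^{j}}\bar{\eta}^{r})\eta^{j}=0$, and the proof of Theorem 4.4 shows this forces $G^{i}=\stackrel{a}{G^{i}}$ together with the two auxiliary identities
\[
a^{\bar{m}i}\frac{\partial b_{\bar{m}}}{\partial z^{j}}\eta^{j}=\frac{\beta}{|\beta|^{2}}\left(\frac{\partial b_{\bar{r}}}{\partial z^{j}}\bar{\eta}^{r}\right)b^{i}\eta^{j},\qquad \bar{b}^{m}\frac{\partial b_{\bar{m}}}{\partial z^{j}}\eta^{j}=||b||^{2}\,\frac{\beta}{|\beta|^{2}}\left(\frac{\partial b_{\bar{r}}}{\partial z^{j}}\bar{\eta}^{r}\right)\eta^{j}.
\]
Substituting $G^{i}=\stackrel{a}{G^{i}}$ and these two relations into the weakly K\"ahler identity of Proposition 4.3 collapses the terms built out of $G^{i}-\stackrel{a}{G^{i}}$, out of $\Gamma^{\bar{r}}_{\bar{j}k}\bar{\eta}^{j}$ and out of $\frac{\partial b_{\bar{m}}}{\partial z^{r}}\bar{\eta}^{m}$. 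What survives I would then split into its rational and its irrational (in $\alpha$) parts, which must vanish separately — the same device used in the proof of Theorem 4.4 — and I would contract the resulting relations (as there, by $b_{i}$, by $l_{i}$, and by $\eta$) to force the vanishing of the $\beta$-dependent correction summands of the Chern--Finsler $(c.n.c.)$ coefficients $N^{i}_{j}$ of the complex Randers metric and to conclude that $N_{j}^{i}=\stackrel{c}{N_{j}^{i}}$, i.e. that $(M,F)$ is K\"ahler by the characterization of K\"ahler via the coincidence of the canonical and Chern--Finsler $(c.n.c.)$'s recalled in \S 2. If, along the way, the hypotheses of Lemma 4.1 turn out to be satisfied, one may instead invoke the K\"ahler transfer of Proposition 4.2 for the same conclusion.

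The main obstacle is precisely this algebraic reduction of the weakly K\"ahler identity of Proposition 4.3: it mixes $C_{j}$, $\Gamma^{\bar{r}}_{\bar{j}k}$, $F_{kl}$ and several rational and irrational powers of $\alpha$, so one must carefully insert the three consequences of generalized Berwald, exploit the $\alpha$-homogeneity to separate the independently vanishing pieces, and perform the contractions in the right order to isolate the K\"ahler condition. Everything else — the forward direction, and the closing step ``K\"ahler $+$ holomorphic $G^{i}$ $\Rightarrow$ complex Berwald'' — is a direct appeal to Theorems 3.4 and 3.6 (and, if used, Proposition 4.2).
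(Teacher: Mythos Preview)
Your outline follows the paper's own route almost exactly: the forward direction is trivial, and for the converse the paper also substitutes the generalized-Berwald identity (your ``Theorem 4.4'' is the paper's Theorem 4.2) into the weakly K\"ahler relation (your ``Proposition 4.3'' is the paper's Proposition 4.2), then separates rational and irrational-in-$\alpha$ parts. Two small corrections to your description, though, since they matter for the execution. First, the $\Gamma_{\bar{j}k}^{\bar{r}}\bar{\eta}^{j}$ term does \emph{not} collapse under the substitution; it is precisely the surviving irrational part, and its vanishing is what forces $\alpha$ to be K\"ahler. Second, contractions alone do not suffice to pass from the $\eta^{j}$-contracted identity $(\bar{\beta}l_{\bar{r}}\frac{\partial\bar{b}^{r}}{\partial z^{j}}+\beta\frac{\partial b_{\bar{r}}}{\partial z^{j}}\bar{\eta}^{r})\eta^{j}=0$ to the uncontracted $N_{j}^{i}=\stackrel{a}{N_{j}^{i}}$: the paper \emph{differentiates} that identity with respect to $\eta^{k}$ (and later with respect to $\bar{\eta}^{m}$) and combines the result with the rational part of the split to obtain the pointwise relation $\bar{\beta}l_{\bar{r}}\frac{\partial\bar{b}^{r}}{\partial z^{k}}+\beta\frac{\partial b_{\bar{r}}}{\partial z^{k}}\bar{\eta}^{r}=0$, which is what kills the correction terms in $N_{j}^{i}$. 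With these two adjustments your plan coincides with the paper's proof.
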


\begin{proof}
If $(M,F)$ is Berwald then it is obvious that the space is
generalized Berwald and weakly K\"{a}hler.

Now, we prove the converse. On the one hand, if the space is generalized
Berwald, by Theorem 4.2, it results $(\bar{\beta}l_{\bar{r}}\frac{\partial
\bar{b}^r}{\partial z^j}+\beta \frac{\partial b_{\bar{r}}}{\partial z^j}\bar{%
\eta}^r)\eta ^j=0,$ which can be rewritten as
\begin{equation}
\bar{\beta}\left( \frac{\partial b_i}{\partial z^l}\eta ^i\eta ^l-2b_l%
\stackrel{a}{G^l}\right) +\beta \frac{\partial b_{\bar{m}}}{\partial z^l}%
\bar{\eta}^m\eta ^l=0.  \label{4.4}
\end{equation}
Moreover, (\ref{4.4}) implies

\begin{equation}
||b||^2\bar{\beta}\left( \frac{\partial b_i}{\partial z^l}\eta ^i\eta ^l-2b_l%
\stackrel{a}{G^l}\right) +|\beta |^2\bar{b}^m\frac{\partial b_{\bar{m}}}{%
\partial z^l}\eta ^l=0.  \label{4.5}
\end{equation}

On the second hand, the space is supposed weakly K\"{a}hler. Therefore, (\ref
{4.4}) and (\ref{4.5}) substituted into (\ref{4.3}) lead to
\begin{equation}
\alpha ^2\left( \bar{\beta}F_{kl}\eta ^l+\beta \frac{\partial b_{\bar{r}}}{%
\partial z^k}\bar{\eta}^r-b_k\frac{\partial b_{\bar{r}}}{\partial z^l}\bar{%
\eta}^r\eta ^l\right) +2\alpha |\beta |a_{l\bar{r}}\Gamma _{\bar{j}k}^{\bar{r%
}}\bar{\eta}^j=0,  \label{4.6}
\end{equation}
which contains two parts: the first is rational and the second is
irrational. It results
\begin{equation}
\bar{\beta}F_{kl}\eta ^l+\beta \frac{\partial b_{\bar{r}}}{\partial z^k}\bar{%
\eta}^r-b_k\frac{\partial b_{\bar{m}}}{\partial z^l}\bar{\eta}^m\eta ^l=0%
\mbox{  and  }a_{l\bar{r}}\Gamma _{\bar{j}k}^{\bar{r}}\bar{\eta}^j=0.
\label{4.7}
\end{equation}
The second condition from (\ref{4.7}) gives the K\"{a}hler property for $%
\alpha .$ Thus, deriving (\ref{4.4}) with respect to $\eta ^k$ it follows
\begin{equation}
b_k\frac{\partial b_{\bar{r}}}{\partial z^l}\bar{\eta}^r\eta ^l=-\beta \frac{%
\partial b_{\bar{r}}}{\partial z^k}\bar{\eta}^r-\bar{\beta}\left( \frac{%
\partial b_l}{\partial z^k}+\frac{\partial b_k}{\partial z^l}\right) \eta
^l+2\bar{\beta}b_l\stackrel{a}{N_k^l.}  \label{4.8}
\end{equation}
Now, (\ref{4.8}) together with the first condition from (\ref{4.7}) implies
\begin{equation}
\bar{\beta}l_{\bar{r}}\frac{\partial \bar{b}^r}{\partial z^k}+\beta \frac{%
\partial b_{\bar{r}}}{\partial z^k}\bar{\eta}^r=0  \label{4.9}
\end{equation}
and from here results its derivative with respect to $\bar{\eta}^m$%
\begin{equation}
l_{\bar{r}}\frac{\partial \bar{b}^r}{\partial z^k}b_{\bar{m}}+\beta \frac{%
\partial b_{\bar{m}}}{\partial z^k}=0.  \label{4.10}
\end{equation}
Moreover, (\ref{4.9}) and (\ref{4.10}) imply
\begin{equation}
a^{\bar{m}i}\frac{\partial b_{\bar{m}}}{\partial z^k}=\frac \beta {|\beta
|^2}\frac{\partial b_{\bar{r}}}{\partial z^k}\bar{\eta}^rb^i\;\mbox{  and  }%
\;\bar{b}^m\frac{\partial b_{\bar{m}}}{\partial z^k}=||b||^2\frac \beta
{|\beta |^2}\frac{\partial b_{\bar{r}}}{\partial z^k}\bar{\eta}^r.
\label{4.11}
\end{equation}
Plugging (\ref{4.9}) and (\ref{4.11}) into (\ref{4.1}), we obtain $N_j^i=%
\stackrel{a}{N_j^i}$ and so, $L_{kj}^i=\stackrel{a}{L_{kj}^i}=$ $\stackrel{a%
}{L_{jk}^i}=L_{jk}^i,$ i.e. the Randers space is K\"{a}hler which proves our claim.
\end{proof}

\subsection{Complex Kropina metric $F:=\frac{\alpha ^2}{|\beta |},|\beta
|\neq 0$}

For the complex Kropina metric $F:=\frac{\alpha ^2}{|\beta |},|\beta |\neq
0, $ we have, (\cite{Al2})
\begin{eqnarray}
\frac{\partial \alpha }{\partial \eta ^i} &=&\frac 1{2\alpha }l_i;\;\frac{%
\partial |\beta |}{\partial \eta ^i}=\frac{\bar{\beta}}{2|\beta |}b_i;\;\eta
_i:=\frac{\partial L}{\partial \eta ^i}=2q^2l_i-q^4\bar{\beta}b_i;\;q:=\frac
\alpha {|\beta |};  \label{4.12} \\
N_j^i &=&\stackrel{a}{N_j^i}-\frac{\bar{\beta}}{|\beta |^2}l_{\bar{r}}\frac{%
\partial \bar{b}^r}{\partial z^j}\eta ^i-\frac{q^2\beta }2t^{\overline{r}i}%
\frac{\partial b_{\bar{r}}}{\partial z^j},  \nonumber
\end{eqnarray}
where $t^{\overline{r}i}:=a^{\overline{r}i}+\frac{2-q^2||b||^2}{q^2|\beta |^2%
}\eta ^i\bar{\eta}^r+\frac 1{|\beta |^2}(\bar{\beta}\eta ^i\bar{b}^r-\beta
b^i\bar{\eta}^r)$ and so, the spray coefficients are
\begin{equation}
G^i=\stackrel{a}{G^i}-\frac{\bar{\beta}}{2|\beta |^2}l_{\bar{r}}\frac{%
\partial \bar{b}^r}{\partial z^j}\eta ^i\eta ^j-\frac{q^2\beta }4t^{%
\overline{r}i}\frac{\partial b_{\bar{r}}}{\partial z^j}\eta ^j.  \label{4.13}
\end{equation}

\begin{proposition}
Let $(M,F)$ be a connected complex Kropina space. Then, $G^{i}=\stackrel{a}{%
G^{i}}$ if and only if $(\bar{\beta}l_{\bar{r}}\frac{\partial b^{\bar{r}}}{%
\partial z^{j}}+\beta \frac{\partial b_{\bar{r}}}{\partial z^{j}}\bar{\eta}%
^{r})\eta ^{j}=0.$
\end{proposition}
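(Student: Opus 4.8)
The plan is to argue directly from formula (\ref{4.13}) for the spray coefficients of a complex Kropina space, since it expresses $G^i-\stackrel{a}{G^i}$ explicitly. Set $B:=l_{\bar r}\frac{\partial\bar b^r}{\partial z^j}\eta^j$, $A:=\frac{\partial b_{\bar r}}{\partial z^j}\bar\eta^r\eta^j$ and $D:=\bar b^r\frac{\partial b_{\bar r}}{\partial z^j}\eta^j$; then the condition to be characterised is precisely $\bar\beta B+\beta A=0$, and (\ref{4.13}) shows that $G^i=\stackrel{a}{G^i}$ if and only if
$$(\star)^i:=\frac{\bar\beta}{2|\beta|^2}\,B\,\eta^i+\frac{q^2\beta}{4}\,t^{\bar r i}\,\frac{\partial b_{\bar r}}{\partial z^j}\,\eta^j=0 .$$
Throughout I will use the elementary identities $l_i=a_{i\bar s}\bar\eta^s$ (which follows from (\ref{4.12})), $l_i\eta^i=\alpha^2$, $l_ia^{\bar r i}=\bar\eta^r$, $l_ib^i=\bar\beta$, $b_i\eta^i=\beta$, $b_ia^{\bar r i}=\bar b^r$, $b_ib^i=||b||^2$, together with $\alpha^2=q^2|\beta|^2$ and $\beta\bar\beta=|\beta|^2$; note also that $\beta\neq 0$ for a Kropina metric.

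For the direct implication, suppose $G^i=\stackrel{a}{G^i}$, so that $(\star)^i=0$. A short computation from the definition of $t^{\bar r i}$ gives $l_it^{\bar r i}=(2-q^2||b||^2)\bar\eta^r+q^2\bar\beta\bar b^r$ and $b_it^{\bar r i}=2\bar b^r+\frac{2\beta(1-q^2||b||^2)}{q^2|\beta|^2}\bar\eta^r$. Contracting $(\star)^i=0$ with $l_i$ and clearing the common factor $q^2/4$ yields
$$2(\bar\beta B+\beta A)+q^2\bigl(|\beta|^2 D-||b||^2\beta A\bigr)=0 ,$$
while contracting with $b_i$, multiplying through by $2\bar\beta$ and using $\beta\bar\beta=|\beta|^2$ yields
$$(\bar\beta B+\beta A)+q^2\bigl(|\beta|^2 D-||b||^2\beta A\bigr)=0 .$$
Subtracting the two identities cancels the $q^2$-term and leaves $\bar\beta B+\beta A=0$, i.e. $\bigl(\bar\beta l_{\bar r}\frac{\partial b^{\bar r}}{\partial z^j}+\beta\frac{\partial b_{\bar r}}{\partial z^j}\bar\eta^r\bigr)\eta^j=0$.

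For the converse, assume $\bar\beta B+\beta A=0$. Differentiating this identity with respect to $\bar\eta^m$ (note that $B$ is holomorphic in $\eta$ and $\dot{\partial}_{\bar{m}}\bar\beta=b_{\bar m}$) gives $b_{\bar m}B+\beta\frac{\partial b_{\bar m}}{\partial z^j}\eta^j=0$, hence $\frac{\partial b_{\bar m}}{\partial z^j}\eta^j=-\frac1\beta b_{\bar m}B$. Contracting this last relation with $a^{\bar m i}$, with $\bar b^m$ and with $\bar\eta^m$ gives respectively $a^{\bar m i}\frac{\partial b_{\bar m}}{\partial z^j}\eta^j=-\frac1\beta b^iB$, $D=-\frac{||b||^2}{\beta}B$ and $A=-\frac{\bar\beta}{\beta}B$. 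Plugging these three relations into $t^{\bar r i}\frac{\partial b_{\bar r}}{\partial z^j}\eta^j$ and simplifying with $\alpha^2=q^2|\beta|^2$ and $\beta\bar\beta=|\beta|^2$, the $b^i$-terms cancel and the contraction collapses to $\frac{2}{q^2|\beta|^2}A\,\eta^i$. Hence $(\star)^i=\frac{\bar\beta}{2|\beta|^2}B\eta^i+\frac{q^2\beta}{4}\cdot\frac{2}{q^2|\beta|^2}A\,\eta^i=\frac{1}{2|\beta|^2}(\bar\beta B+\beta A)\,\eta^i=0$, so $G^i=\stackrel{a}{G^i}$.

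The computations are routine; the only mildly delicate point is the direct implication, where one has to evaluate the contractions $l_it^{\bar r i}$ and $b_it^{\bar r i}$ correctly and keep track of the factors $q^2=\alpha^2/|\beta|^2$ and of the ratios $\beta/\bar\beta$ and $\bar\beta/|\beta|^2$. The conceptual point there is simply that the $l_i$- and $b_i$-contractions produce identities of the shape $2X+Y=0$ and $X+Y=0$ with $X=\bar\beta B+\beta A$, so that their difference is exactly the relation $X=0$ we want.
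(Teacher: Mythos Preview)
Your proof is correct and follows essentially the same route as the paper: for the direct implication you contract the spray difference $G^i-\stackrel{a}{G^i}$ from (\ref{4.13}) with $l_i$ and with $b_i$ to obtain two linear relations of the form $2X+Y=0$ and $X+Y=0$, whence $X=\bar\beta B+\beta A=0$; for the converse you differentiate the hypothesis in $\bar\eta^m$ to extract the auxiliary identities for $a^{\bar m i}\frac{\partial b_{\bar m}}{\partial z^j}\eta^j$ and $\bar b^m\frac{\partial b_{\bar m}}{\partial z^j}\eta^j$ and substitute back into (\ref{4.13}). The paper does exactly this, only packaging the contractions as $4|\beta|^2(\stackrel{a}{G^i}-G^i)l_i$ and $4|\beta|^2(\stackrel{a}{G^i}-G^i)b_i$ and writing the auxiliary relations in the equivalent form $a^{\bar m i}\frac{\partial b_{\bar m}}{\partial z^j}\eta^j=\frac{\beta}{|\beta|^2}A\,b^i$, $D=\frac{||b||^2\beta}{|\beta|^2}A$.
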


\begin{proof}
Using (\ref{4.13}), we have

$4|\beta |^2(\stackrel{a}{G^i}-G^i)l_i=2\alpha ^2(\bar{\beta}l_{\bar{r}}%
\frac{\partial b^{\bar{r}}}{\partial z^j}+\beta \frac{\partial b_{\bar{r}}}{%
\partial z^j}\bar{\eta}^r)\eta ^j+\alpha ^4\frac{\partial b_{\bar{r}}}{%
\partial z^j}\eta ^j(b^{\bar{r}}-\frac \beta {|\beta |^2}||b||^2\bar{\eta}^r)
$ and

$4|\beta |^2(\stackrel{a}{G^i}-G^i)b_i=2\beta (\bar{\beta}l_{\bar{r}}\frac{%
\partial b^{\bar{r}}}{\partial z^j}+\beta \frac{\partial b_{\bar{r}}}{%
\partial z^j}\bar{\eta}^r)\eta ^j+2\alpha ^2\beta \frac{\partial b_{\bar{r}}%
}{\partial z^j}\eta ^j(b^{\bar{r}}-\frac \beta {|\beta |^2}||b||^2\bar{\eta}%
^r).$

Thus, if $G^i=\stackrel{a}{G^i}$ then $(\bar{\beta}l_{\bar{r}}\frac{\partial
b^{\bar{r}}}{\partial z^j}+\beta \frac{\partial b_{\bar{r}}}{\partial z^j}%
\bar{\eta}^r)\eta ^j.$ Conversely, the assumption $(\bar{\beta}l_{\bar{r}}%
\frac{\partial b^{\bar{r}}}{\partial z^j}+\beta \frac{\partial b_{\bar{r}}}{%
\partial z^j}\bar{\eta}^r)\eta ^j=0$ implies $a^{\bar{m}i}\frac{\partial b_{%
\bar{m}}}{\partial z^j}\eta ^j=\frac \beta {|\beta |^2}\frac{\partial b_{%
\bar{r}}}{\partial z^j}\bar{\eta}^rb^i\eta ^j\;$and$\;\bar{b}^m\frac{%
\partial b_{\bar{m}}}{\partial z^j}\eta ^j=||b||^2\frac \beta {|\beta |^2}%
\frac{\partial b_{\bar{r}}}{\partial z^j}\bar{\eta}^r\eta ^j,$ which
substituted into (\ref{4.13}) it results $G^i=\stackrel{a}{G^i}$.
\end{proof}

\begin{theorem}
If $(M,F)$ is a connected complex Kropina space with property $(\bar{\beta}%
l_{\bar{r}}\frac{\partial b^{\bar{r}}}{\partial z^{j}}+\beta \frac{\partial
b_{\bar{r}}}{\partial z^{j}}\bar{\eta}^{r})\eta ^{j}=0,$ then it is
generalized Berwald.
\end{theorem}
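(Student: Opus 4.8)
The plan is to reduce this statement directly to Proposition 4.2, which is the Kropina-metric analogue of Theorem 4.2's ``converse'' direction. Concretely, Theorem 4.6 asserts that the hypothesis $(\bar{\beta}l_{\bar{r}}\frac{\partial b^{\bar{r}}}{\partial z^{j}}+\beta \frac{\partial b_{\bar{r}}}{\partial z^{j}}\bar{\eta}^{r})\eta ^{j}=0$ forces the space to be generalized Berwald; by Theorem 3.6, being generalized Berwald is equivalent to $G^i$ being holomorphic in $\eta$. So the entire proof is: invoke Proposition 4.2 to get $G^i=\stackrel{a}{G^i}$ from the hypothesis, observe that $\stackrel{a}{G^i}=\frac{1}{2}\stackrel{a}{N^i_j}\eta^j=\frac{1}{2}a^{\bar m k}\frac{\partial a_{l\bar m}}{\partial z^j}\eta^l\eta^j$ is manifestly holomorphic in $\eta$ (indeed quadratic in $\eta$, with coefficients depending only on $z$), and conclude via Theorem 3.6 (i)$\Leftrightarrow$(ii) that $(M,F)$ is generalized Berwald.

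In more detail, the steps in order: First I would state that by Proposition 4.2, the assumed identity implies $G^i=\stackrel{a}{G^i}$. Second, I would note that $\stackrel{a}{G^i}$ are the spray coefficients of the purely Hermitian metric $\alpha$, and since $a_{i\bar j}=a_{i\bar j}(z)$, the functions $\stackrel{a}{N^i_j}=a^{\bar m i}\frac{\partial a_{l\bar m}}{\partial z^j}\eta^l$ are linear in $\eta$ and $\stackrel{a}{G^i}=\frac12\stackrel{a}{N^i_j}\eta^j$ is quadratic in $\eta$ with coefficients in $z$ only; in particular $\dot\partial_{\bar h}\stackrel{a}{G^i}=0$. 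Third, combining these gives $\dot\partial_{\bar h}G^i=0$, so by Theorem 3.6 the space is generalized Berwald. I would also remark (or could leave implicit) that then $\stackrel{B}{L^i_{jk}}=\dot\partial_k\dot\partial_j G^i=\stackrel{a}{L^i_{jk}}(z)=\stackrel{a}{N^i_{jk}}$-type coefficients depend only on $z$, matching Definition 3.5.

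There is essentially no obstacle here — the real work was already done in Proposition 4.2 (which in turn mimics the algebra used for the Randers case in Theorem 4.2). The only thing to be slightly careful about is that Proposition 4.2 is stated as an ``if and only if'' for $G^i=\stackrel{a}{G^i}$, so one uses only the direction ``identity $\Rightarrow$ $G^i=\stackrel{a}{G^i}$''; and one should make sure the reader sees that holomorphicity of $\stackrel{a}{G^i}$ in $\eta$ is immediate from $a_{i\bar j}$ depending on $z$ alone. Thus the proof is genuinely two lines: apply Proposition 4.2, then apply Theorem 3.6. I would write it as:

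\begin{proof}
By Proposition 4.3, the hypothesis $(\bar{\beta}l_{\bar{r}}\frac{\partial b^{\bar{r}}}{\partial z^{j}}+\beta \frac{\partial b_{\bar{r}}}{\partial z^{j}}\bar{\eta}^{r})\eta ^{j}=0$ implies $G^{i}=\stackrel{a}{G^{i}}$. But $\stackrel{a}{G^{i}}=\frac{1}{2}a^{\bar{m}k}\frac{\partial a_{l\bar{m}}}{\partial z^{j}}\eta ^{l}\eta ^{j}$ is quadratic in $\eta$ with coefficients depending only on $z$, hence holomorphic in $\eta$. Therefore $G^{i}$ are holomorphic in $\eta$ and, by Theorem 3.6, the space is generalized Berwald.
\end{proof}
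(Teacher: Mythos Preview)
Your proof is correct and follows essentially the same route as the paper: invoke Proposition 4.3 to obtain $G^{i}=\stackrel{a}{G^{i}}$, then observe that the Hermitian spray coefficients are holomorphic in $\eta$, hence the space is generalized Berwald. Your write-up is slightly more explicit than the paper's (you spell out why $\stackrel{a}{G^{i}}$ is holomorphic and cite Theorem 3.6), but the argument is the same; note only the minor numbering slip in your discussion, where you refer to ``Proposition 4.2'' but correctly cite Proposition 4.3 in the formal proof.
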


\begin{proof}
Indeed, by Proposition 4.3 we have $G^i=\stackrel{a}{G^i}$ which gives $%
\dot{\partial}_{\bar{h}}G^i=0$.
\end{proof}

\begin{proposition}
Let $(M,F)$ be a connected complex Kropina space. Then, $N_{j}^{i}=\stackrel{%
a}{N_{j}^{i}}$ if and only if $\bar{\beta}l_{\bar{r}}\frac{\partial b^{\bar{r%
}}}{\partial z^{j}}+\beta \frac{\partial b_{\bar{r}}}{\partial z^{j}}\bar{%
\eta}^{r}=0.$
\end{proposition}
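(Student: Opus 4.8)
The plan is to mimic the proof of Proposition 4.3, but track the full expression for $N_j^i$ rather than just the spray coefficients $G^i$. The key formulas are the second line of (\ref{4.12}), which expresses $N_j^i$ in terms of $\stackrel{a}{N_j^i}$, $l_{\bar r}\frac{\partial \bar b^r}{\partial z^j}$ and $t^{\bar r i}\frac{\partial b_{\bar r}}{\partial z^j}$, together with the explicit form of $t^{\bar r i}$.

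First I would prove the ``if'' direction. Assuming $\bar\beta l_{\bar r}\frac{\partial b^{\bar r}}{\partial z^j}+\beta\frac{\partial b_{\bar r}}{\partial z^j}\bar\eta^r=0$, I differentiate with respect to $\bar\eta^m$ (as in the Randers proof, cf.\ the passage from (\ref{4.9}) to (\ref{4.10})) to get a second relation $l_{\bar r}\frac{\partial \bar b^r}{\partial z^j}b_{\bar m}+\beta\frac{\partial b_{\bar m}}{\partial z^j}=0$. These two relations, exactly as in the derivation of (\ref{4.11}), yield $a^{\bar m i}\frac{\partial b_{\bar m}}{\partial z^j}=\frac{\beta}{|\beta|^2}\frac{\partial b_{\bar r}}{\partial z^j}\bar\eta^r b^i$ and $\bar b^m\frac{\partial b_{\bar m}}{\partial z^j}=||b||^2\frac{\beta}{|\beta|^2}\frac{\partial b_{\bar r}}{\partial z^j}\bar\eta^r$. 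Substituting these into the expression for $t^{\bar r i}\frac{\partial b_{\bar r}}{\partial z^j}$ and into the $l_{\bar r}\frac{\partial\bar b^r}{\partial z^j}$ term in (\ref{4.12}), all the correction terms to $\stackrel{a}{N_j^i}$ cancel, giving $N_j^i=\stackrel{a}{N_j^i}$.

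For the ``only if'' direction, I would assume $N_j^i=\stackrel{a}{N_j^i}$, so that $\frac{\bar\beta}{|\beta|^2}l_{\bar r}\frac{\partial\bar b^r}{\partial z^j}\eta^i+\frac{q^2\beta}{2}t^{\bar r i}\frac{\partial b_{\bar r}}{\partial z^j}=0$. Since this is a tensorial identity in $i$, I would contract it successively with $b_i$ and with $l_i$ (i.e.\ with $a_{i\bar s}\bar b^s$ and $a_{i\bar s}\bar\eta^s$, using $l_i\eta^i$-type contractions), producing two scalar equations involving $\bar\beta l_{\bar r}\frac{\partial\bar b^r}{\partial z^j}$, $\beta\frac{\partial b_{\bar r}}{\partial z^j}\bar\eta^r$, and $\frac{\partial b_{\bar r}}{\partial z^j}(b^{\bar r}-\frac{\beta}{|\beta|^2}||b||^2\bar\eta^r)$ (the same combinations that appear in Proposition 4.3). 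Solving this linear system should isolate $\bar\beta l_{\bar r}\frac{\partial\bar b^r}{\partial z^j}+\beta\frac{\partial b_{\bar r}}{\partial z^j}\bar\eta^r=0$. Note this is a pointwise identity in $\eta$, not merely after contraction with $\eta^j$, so unlike Proposition 4.3 no final contraction with $\eta^j$ is needed.

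The main obstacle will be the bookkeeping in the ``only if'' direction: extracting from a single vector identity (one free index $i$) enough independent scalar consequences to pin down the desired relation. The cleanest route is probably to contract the identity $\frac{\bar\beta}{|\beta|^2}l_{\bar r}\frac{\partial\bar b^r}{\partial z^j}\eta^i+\frac{q^2\beta}{2}t^{\bar r i}\frac{\partial b_{\bar r}}{\partial z^j}=0$ with $l_i$ and $b_i$ and recognize that the resulting two-by-two system has nonvanishing determinant on a connected complex Kropina space where $|\beta|\neq 0$ and the metric is nondegenerate, so it forces the bracket to vanish; the other technical point is remembering that differentiation in $\bar\eta^m$ commutes appropriately with the position-dependent quantities $b_i(z)$, $a_{i\bar j}(z)$, which is routine.
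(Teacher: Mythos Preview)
Your proposal is correct and follows essentially the same approach as the paper: for the ``only if'' direction the paper contracts the difference $\stackrel{a}{N_j^i}-N_j^i$ with $l_i$ and with $b_i$ to obtain exactly the two scalar equations you anticipate (in the unknowns $\bar\beta l_{\bar r}\tfrac{\partial b^{\bar r}}{\partial z^j}+\beta\tfrac{\partial b_{\bar r}}{\partial z^j}\bar\eta^r$ and $\tfrac{\partial b_{\bar r}}{\partial z^j}(b^{\bar r}-\tfrac{\beta}{|\beta|^2}||b||^2\bar\eta^r)$), and eliminates to get the claim; for the ``if'' direction it derives the same auxiliary identities $a^{\bar m i}\tfrac{\partial b_{\bar m}}{\partial z^j}=\tfrac{\beta}{|\beta|^2}\tfrac{\partial b_{\bar r}}{\partial z^j}\bar\eta^r b^i$ and $\bar b^m\tfrac{\partial b_{\bar m}}{\partial z^j}=||b||^2\tfrac{\beta}{|\beta|^2}\tfrac{\partial b_{\bar r}}{\partial z^j}\bar\eta^r$ and substitutes into (\ref{4.12}). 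Your remark that the $2\times 2$ system has nonvanishing determinant (here $2\alpha^4\beta\neq 0$) is the point the paper leaves implicit.
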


\begin{proof}
Taking into account (\ref{4.12}), we obtain

$2|\beta |^2(\stackrel{a}{N_j^i}-N_j^i)l_i=2\alpha ^2(\bar{\beta}l_{\bar{r}}%
\frac{\partial b^{\bar{r}}}{\partial z^j}+\beta \frac{\partial b_{\bar{r}}}{%
\partial z^j}\bar{\eta}^r)+\alpha ^4\frac{\partial b_{\bar{r}}}{\partial z^j}%
(b^{\bar{r}}-\frac \beta {|\beta |^2}||b||^2\bar{\eta}^r)$ and

$2|\beta |^2(\stackrel{a}{N_j^i}-N_j^i)b_i=2\beta (\bar{\beta}l_{\bar{r}}%
\frac{\partial b^{\bar{r}}}{\partial z^j}+\beta \frac{\partial b_{\bar{r}}}{%
\partial z^j}\bar{\eta}^r)+2\alpha ^2\beta \frac{\partial b_{\bar{r}}}{%
\partial z^j}(b^{\bar{r}}-\frac \beta {|\beta |^2}||b||^2\bar{\eta}^r),$
which give $\bar{\beta}l_{\bar{r}}\frac{\partial b^{\bar{r}}}{\partial z^j}%
+\beta \frac{\partial b_{\bar{r}}}{\partial z^j}\bar{\eta}^r=0,$ under
assumption $N_j^i=\stackrel{a}{N_j^i}.$

Conversely, if $\bar{\beta}l_{\bar{r}}\frac{\partial b^{\bar{r}}}{\partial
z^j}+\beta \frac{\partial b_{\bar{r}}}{\partial z^j}\bar{\eta}^r=0$ then $a^{%
\bar{m}i}\frac{\partial b_{\bar{m}}}{\partial z^k}=\frac \beta {|\beta |^2}%
\frac{\partial b_{\bar{r}}}{\partial z^k}\bar{\eta}^rb^i$ and $\bar{b}^m%
\frac{\partial b_{\bar{m}}}{\partial z^k}=||b||^2\frac \beta {|\beta |^2}%
\frac{\partial b_{\bar{r}}}{\partial z^k}\bar{\eta}^r$ , which together (\ref
{4.12}) lead to $N_j^i=\stackrel{a}{N_j^i}$.
\end{proof}

\begin{theorem}
Let $(M,F)$ be a connected complex Kropina space. If $\alpha $ is K\"{a}hler
and $\bar{\beta}l_{\bar{r}}\frac{\partial b^{\bar{r}}}{\partial z^{j}}+\beta
\frac{\partial b_{\bar{r}}}{\partial z^{j}}\bar{\eta}^{r}=0$, then the space
is Berwald.
\end{theorem}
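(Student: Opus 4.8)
The plan is to combine Theorem 4.4 (the generalized Berwald criterion for Kropina spaces) with Proposition 4.5 (the $N_j^i = \stackrel{a}{N_j^i}$ criterion) and then promote ``generalized Berwald plus K\"ahler $\alpha$'' to ``Berwald'' by showing the Chern--Finsler coefficients depend only on $z$. First I would observe that the hypothesis $\bar{\beta}l_{\bar{r}}\frac{\partial b^{\bar{r}}}{\partial z^{j}}+\beta \frac{\partial b_{\bar{r}}}{\partial z^{j}}\bar{\eta}^{r}=0$ immediately implies, upon contraction with $\eta^j$, the weaker condition $(\bar{\beta}l_{\bar{r}}\frac{\partial b^{\bar{r}}}{\partial z^{j}}+\beta \frac{\partial b_{\bar{r}}}{\partial z^{j}}\bar{\eta}^{r})\eta^j=0$ of Theorem 4.4, so the space is already generalized Berwald, i.e. $\dot{\partial}_{\bar h}G^i=0$ and $\stackrel{B}{L_{jk}^i}(z)$. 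Then, by Proposition 4.5, the full (uncontracted) hypothesis gives $N_j^i = \stackrel{a}{N_j^i}$, the Chern--Finsler nonlinear connection of the Kropina metric coincides with that of the background purely Hermitian metric $\alpha$.

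Next I would differentiate $N_j^i = \stackrel{a}{N_j^i}$ with respect to $\eta^k$ to obtain $L_{jk}^i = \dot{\partial}_k N_j^i = \dot{\partial}_k \stackrel{a}{N_j^i} = \stackrel{a}{L_{jk}^i}$; note this uses that for the purely Hermitian metric $\alpha$ one has $\dot{\partial}_k\stackrel{a}{N_j^i}=\stackrel{a}{L_{jk}^i}$ exactly as in the general formula \eqref{1.3}, and crucially that the coefficients $\stackrel{a}{L_{jk}^i}=a^{\bar l i}(\stackrel{a}{\delta_k}a_{j\bar l})$ depend only on $z$ because $a_{i\bar j}=a_{i\bar j}(z)$. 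Thus $L_{jk}^i = \stackrel{a}{L_{jk}^i}(z)$, so the Chern--Finsler coefficients of $F$ depend only on position. By Corollary 3.2 this already makes $(M,F)$ generalized Berwald, but to conclude it is \emph{Berwald} in Aikou's sense I still need the K\"ahler property of $F$ itself; this is where the hypothesis that $\alpha$ is K\"ahler enters. Since $\alpha$ is K\"ahler, $\stackrel{a}{L_{jk}^i}=\stackrel{a}{L_{kj}^i}$, hence $L_{jk}^i = L_{kj}^i$, i.e. $T_{jk}^i=0$, so $F$ is (strongly) K\"ahler. Together with $L_{jk}^i(z)$, Theorem 3.4 (equivalence of $G$-K\"ahler and complex Berwald) — or directly the definition recalled from \cite{Ai} — gives that $(M,F)$ is a complex Berwald space.

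The main obstacle, and the only point requiring genuine care rather than bookkeeping, is justifying the passage from $N_j^i=\stackrel{a}{N_j^i}$ to the symmetry $L_{jk}^i=L_{kj}^i$: one must be sure that differentiating the nonlinear connection identity in $\eta^k$ legitimately yields the Chern--Finsler $L$-coefficients of $F$ (via $L_{jk}^i=\dot\partial_j N_k^i=\dot\partial_k N_j^i$ from \eqref{1.3}) and simultaneously the $\alpha$-coefficients, and then that the K\"ahler hypothesis on $\alpha$ — not on $F$ a priori — is what closes the argument. The remaining steps (the contraction giving the Theorem 4.4 hypothesis, invoking Propositions 4.3 and 4.5) are routine. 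In short: \textbf{(1)} contract the hypothesis and apply Theorem 4.4 to get generalized Berwald; \textbf{(2)} apply Proposition 4.5 to get $N_j^i=\stackrel{a}{N_j^i}$; \textbf{(3)} differentiate to get $L_{jk}^i=\stackrel{a}{L_{jk}^i}(z)$; \textbf{(4)} use K\"ahlerianity of $\alpha$ to get $T_{jk}^i=0$, hence $F$ K\"ahler; \textbf{(5)} conclude via Theorem 3.4 that $(M,F)$ is complex Berwald.
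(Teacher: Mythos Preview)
Your proof is correct and follows essentially the same route as the paper's own argument; in fact, it is a more detailed version of it. The paper's proof simply invokes Proposition 4.4 (what you call ``Proposition 4.5'' --- note the numbering: there is no Proposition 4.5) to get $N_j^i=\stackrel{a}{N_j^i}$, observes that this yields $\dot\partial_{\bar h}G^i=0$, and then asserts without further comment that ``$\alpha$ is supposed K\"ahler, therefore it results that $F$ is also K\"ahler.'' Your steps (3)--(4), differentiating the nonlinear-connection identity to obtain $L_{jk}^i=\stackrel{a}{L_{jk}^i}(z)$ and then using the symmetry $\stackrel{a}{L_{jk}^i}=\stackrel{a}{L_{kj}^i}$ from the K\"ahler hypothesis on $\alpha$, are precisely the justification the paper leaves implicit. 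Your step (1) is harmless but redundant: once $N_j^i=\stackrel{a}{N_j^i}$ from Proposition 4.4, $G^i=\stackrel{a}{G^i}$ follows by contraction with $\eta^j$, so the separate appeal to Theorem 4.4 is not needed.
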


\begin{proof}
By Proposition 4.4 we have $N_j^i=\stackrel{a}{N_j^i}$ and so, $\dot{%
\partial}_{\bar{h}}G^i=0,$ i.e. the space is generalized Berwald. But $%
\alpha $ is supposed K\"{a}hler, therefore it results that $F$ is also K\"{a}%
hler. The proof is complete.
\end{proof}

\medskip Certainly, as it is expected, our study is far from being complete.
Here we tried to point out some classes of complex Finsler spaces with
special properties for Cartan tensors, having in mind an analogy with the
real case. There is not enough space here to prove with other examples that
our classification of these complex Finsler spaces is proper. Otherwise, we
don't have at hand too many examples of complex Finsler spaces, in fact the
study of these spaces could be considered rather at a first stage. It is our
goal to look for other significant examples from the new class of complex
Randers spaces, \cite{Al-Mu2,C-S2}, in particular for two dimensional case,
recently studied by us. Keeping in mind that in the real case the relations
between Landsberg and Berwald spaces give rise to some questions which had
been open for a long time (\cite{Do, Sz2, Mt}), it is possible that the same
takes place for our setting. However, from our point of view this
classification seems natural.

\textbf{Acknowledgment:} This paper is supported by the Sectorial
Operational Program Human Resources Development (SOP HRD), financed from the
European Social Fund and by Romanian Government under the Project number
POSDRU/89/1.5/S/59323.

\begin{flushleft}

Transilvania Univ.,
Faculty of Mathematics and Informatics

Iuliu Maniu 50, Bra\c{s}ov 500091, ROMANIA

e-mail: nicoleta.aldea@lycos.com

e-mail: gh.munteanu@unitbv.ro
\end{flushleft}

\end{document}